\newtheorem{thm}{Theorem}[section]
\newtheorem{theorem}[thm]{Theorem}
\newtheorem{remark}[thm]{Remark}
\newcounter{old}
\newtheorem{lemma}[thm]{Lemma}
\newtheorem{prop}[thm]{Proposition}
\theoremstyle{definition}
\newtheorem{defn}[thm]{Definition}
\theoremstyle{remark}
\newtheorem{rem}[thm]{Remark}
\numberwithin{equation}{section}
	\newcommand{\ee}{\end{equation}}
\def\sqw{\hbox{\rlap{\leavevmode\raise.3ex\hbox{$\sqcap$}}$%
		\sqcup$}}
\def\findem{\ifmmode\sqw\else{\ifhmode\unskip\fi\nobreak\hfil
		\penalty50\hskip1em\null\nobreak\hfil\sqw
		\parfillskip=0pt\finalhyphendemerits=0\endgraf}\fi}
\newcommand{\R}{{\mathbb {R}}}
\newcommand{\N}{{\mathbb N}}
\newcommand{\Z}{{\mathbb Z}}
\newcommand{\s}{\mathbf S}
\newcommand{\supp}{\operatorname{supp}}
\begin{document}
\baselineskip16pt
	
\title[]{$L^{p}$-estimates for uncentered spherical averages and lacunary 
maximal functions}

\author[Bhojak]{Ankit Bhojak}
\address{Ankit Bhojak\\
	Department of Mathematics\\
	Indian Institute of Science Education and Research Bhopal\\
	Bhopal-462066, India.}
\email{ankitb@iiserb.ac.in}

\author[Choudhary]{Surjeet Singh Choudhary}
\address{Surjeet Singh Choudhary\\
	Department of Mathematics\\
	Indian Institute of Science Education and Research Bhopal\\
	Bhopal-462066, India.}
\email{surjeet19@iiserb.ac.in}

\author[Shrivastava]{Saurabh Shrivastava}
\address{Saurabh Shrivastava\\
	Department of Mathematics\\
	Indian Institute of Science Education and Research Bhopal\\
	Bhopal-462066, India.}
\email{saurabhk@iiserb.ac.in}

\author[Shuin]{Kalachand Shuin}
\address{Kalachand Shuin\\
	Department of Mathematics\\
	Indian Institute of Science\\
	Bengaluru-560012, India.}
\email{kalachands@iisc.ac.in}	
	
	
\subjclass[2010]{Primary 42B25; Secondary 42B15;}
\date{\today}
\keywords{}
	
\begin{abstract}
	The primary goal of this paper is to introduce bilinear analogues of 
	uncentered spherical averages, Nikodym averages associated with spheres and the 
	associated bilinear maximal functions.
	We obtain $L^p$-estimates for uncentered bilinear maximal functions for dimensions $d\geq2$. Moreover, we also discuss the one-dimensional case. 
	In the process of developing these results, we also establish new and interesting results in the linear case. 
	In particular, we will prove $L^p$-improving properties for single 
	scale averaging operators and $L^p$-estimates for lacunary maximal functions in this context.

\end{abstract}
\maketitle
\tableofcontents
\section{Introduction} \label{section1}
The classical Kakeya needle problem concerns the region with minimum area 
needed to rotate a needle of length one by $2\pi$ radians in the plane. 
The problem was resolved by a construction of Besicovitch, where he showed 
that the needle can be rotated in a region of arbitrary small area. 
Cunningham \cite{Cunningham} proposed a similar problem for circular 
arcs; more precisely, he asked if it is possible to move a circular 
arc from one point to another in a region with arbitrary small area. 
This question was answered in the positive by H\'era and 
Laczkovich \cite{KakeyaForCircularArcs} for sufficiently 
small arcs. Chang and Cs\"ornyei \cite{ChangCsornyei} showed 
the similar property holds for the set obtained by removing 
arbitrary small neighbourhoods of diametrically opposite points 
from a circle. These problems are closely related to certain maximal 
averaging operators \cite{BourgainCircular, MattilaFourierbook, Wolff}.

Recently Chang, Dosidis and Kim \cite{NikodymSetsAndMaximalFunctionsAssociatedWithSpheres} 
considered the Kakeya problem for the $(d-1)$-dimensional sphere 
and obtained $L^p$-bounds for the uncentered maximal averaging operators 
associated to the spheres. Motivated by these developments, we study 
off-diagonal $L^p$-estimates for uncentered single scale averages 
and $L^p$-estimates for the corresponding uncentered lacunary maximal averages. 
In view of the extensive progress made in the theory of bilinear spherical maximal 
functions \cite{GGIPS,GHH,ImprovedBoundsForTheBilinearSphericalMaximalOperators, 
MaximalEstimatesForTheBilinearSphericalAveragesAndTheBilinearBochnerRieszOperators,
IPS,ChristZhou,DosidisRamos,Borges,GGPP,RSS,ShrivastavaShuin}, we extend the notion of uncentered 
spherical averages and Nikodym type maximal averages to bilinear setting and 
establish $L^p$-estimates for them in all dimensions. Since the introductory material 
on uncentered spherical averages and Nikodym averages requires different notation, we dedicate separate 
sections to describe the 
results on each topic mentioned above along with the necessary preliminary background.  

\subsection*{Organization of the paper} The rest of the paper is organized 
as follows. 
\begin{itemize}
	\item In Section~\ref{Sec:uncentered}, we discuss some known results for uncentered spherical maximal functions. Later, we describe our results on $L^p$-improving properties and lacunary maximal functions in this direction. These results are proved in Sections~\ref{Sec:prooflacuncentered} and \ref{Sec:ProofLp}.
	\item Section~\ref{Sec:Nikodym} is devoted to the discussion of Nikodym maximal function associated with spheres. In this section, we state our result on the lacunary Nikodym maximal function and prove it in~Section~\ref{Sec:proofNikodym}.  
	\item The bilinear analogues of the results discussed in Sections~\ref{Sec:uncentered} and \ref{Sec:Nikodym} are described in Section~\ref{Sec:bilinear}. Section \ref{Sec:prooflacbilinearuncentered} is devoted to prove the main result of the paper on the bilinear lacunary maximal operator $\mathcal N^T_{lac}$. Proof of bilinear Nikodym maximal function is discussed in Section~\ref{Sec:proofNikodym}. 
\end{itemize}

\section{Uncentered spherical averages and maximal functions}\label{Sec:uncentered}
Let $f\in\mathcal{S}(\mathbb{R}^{d}), d\geq 2$. Given $u\in\mathbb{R}^{d},$ the 
uncentered spherical  average of $f$ is defined by  
$$A^{u}_{t}f(x)=\int_{\mathbb{S}^{d-1}}f(x+t(u+y))~d\sigma(y),~t>0, $$
where $d\sigma$ is the normalized surface measure on the sphere $\mathbb{S}^{d-1}$. 
For $t=1$, we use the notation $A^{u}_{1}=A^{u}.$ Note that if 
$u=0, A^{0}_{t}f$ is the standard spherical average. The spherical 
averages are well-studied in the literature in various contexts. 
For example, it is well-known that the spherical average $A^{0}_{t}f$ 
appears as a solution to the wave equation. Littman \cite{Littman} 
investigated  $L^{p}\rightarrow L^{q}$-estimates of the operator 
$A^{0}$ for $d\geq2$. He proved the following $L^p$-improving estimates 
for spherical averages:
\begin{eqnarray}\label{Littman}
	\Vert A^{0}f\Vert_{L^{q}(\mathbb{R}^{d})}\lesssim \Vert f\Vert_{L^{p}(\mathbb{R}^{d})}
\end{eqnarray} 
for $(\frac{1}{p},\frac{1}{q})$ belonging to the triangle with vertices  $\{(0,0),(1,1),(\frac{d}{d+1},\frac{1}{d+1})\}$. 
Here, the notation $A\lesssim B$ means that there exists a constant $C>0$ such that $A\leq CB$. We 
also refer the reader to  ~\cite{Strichartz} and \cite{SparseBoundsForSphericalMaximalFunction} 
for $L^p$-improving properties of spherical averages. The corresponding $L^p$-improving estimates 
for $A^{0}_{t}$ follows by scaling. 

Recently,  Chang, Dosidis and Kim~\cite{NikodymSetsAndMaximalFunctionsAssociatedWithSpheres} 
introduced maximal functions associated with uncentered spherical averages and proved interesting 
results. Given a compact set  $T\subset\mathbb{R}^{d},$ they considered the maximal function
$$A^{T}f(x):=\sup_{u\in T}|A^{u}f(x)|$$
and the full maximal function associated with uncentered spherical averages 
$$S^{T}f(x):=\sup_{u\in T}\sup_{t>0}|A^{u}_{t}f(x)|.$$
Observe that if $T=\{0\}$, the maximal operator $S^T$ corresponds to the classical spherical 
maximal function, denoted by $M_{full}$. The spherical maximal operator $M_{full}$ has been 
studied extensively. Stein \cite{SteinMaximalFunctionsSpherical} and Bourgain \cite{BourgainCircular} 
proved sharp $L^p$-estimates for the spherical maximal function for $d\geq3$ and $d=2$ respectively. 
The $L^p$-estimates for $M_{full}$ holds in the range $p>\frac{d}{d-1},$ which is known to be optimal. 
If we restrict the supremum in the definition of $M_{full}$ to lacunary sequences, the resulting 
maximal function is commonly referred to as the lacunary spherical maximal function and is denoted 
by $M_{lac}$. The range of $p$ for which $M_{lac}$ satisfies $L^p$-estimates extends to $p>1.$ 
We refer the reader to~\cite{SparseBoundsForSphericalMaximalFunction} for recent developments 
on operators $M_{lac}$ and $M_{full}$. 

\sloppy Chang, Dosidis and Kim~\cite{NikodymSetsAndMaximalFunctionsAssociatedWithSpheres} showed 
that in order to get non-trivial $L^p$-estimates for operators $A^T$ and $S^T$, we require 
suitable assumptions on `size' of the underlying set $T$. For example, if we consider 
$T=\mathbb{S}^{d-1},$ then the operator $A^{\mathbb{S}^{d-1}}$ is not bounded in 
$L^{p}(\R^n)$ for any $p<\infty$. This assertion follows from the existence of Nikodym sets 
for spheres,~see\cite{NikodymSetsAndMaximalFunctionsAssociatedWithSpheres} and references 
therein for precise details. Let us recall the notion of Nikodym sets in the context of lines and spheres. 
\begin{defn}\cite{NikodymSetsAndMaximalFunctionsAssociatedWithSpheres, MattilaGeometryOfSetsMeasureEucli}
	\begin{enumerate}
		\item A Nikodym set for lines is a set $A\subset\mathbb{R}^{d}$ of 
		Lebesgue measure zero such that for every $x\in\mathbb{R}^d,$ there is a line $\tau$ which 
		passes through $x$ and $A\cap \tau$ contains a unit line segment. 
		\item Similarly, a Nikodym set for spheres (resp., unit spheres) is a set $T\subset \R^d$ 
		of Lebesgue measure zero such that for every $y$ in a set of positive Lebesgue measure, there 
		exists a sphere (resp., unit sphere) $S$ containing $y$ such that $A\cap S$ has positive 
		$(d-1)$-dimensional Hausdorff measure. 
	\end{enumerate}
\end{defn}
Nikodym sets are closely related with Kakeya sets and Besicovitch sets. The interested reader 
is referred to Mattila~\cite{MattilaGeometryOfSetsMeasureEucli} for more details about these sets. 
Note that the Nikodym sets for spheres in $\R^d$ are small with respect to the $d$- dimensional 
Lebesgue measure. However, their Hausdorff dimension must 
be $d$, see~\cite{NikodymSetsAndMaximalFunctionsAssociatedWithSpheres} for details. 

In \cite{NikodymSetsAndMaximalFunctionsAssociatedWithSpheres}, Chang, Dosidis and Kim studied 
$L^{p}$-boundedness of maximal functions $A^{T}$ and $S^T$ under suitable conditions on the 
size of $T$ in terms of Minkowski content. A compact set $T\subset\mathbb{R}^{d}$ is said to 
have finite $s$-dimensional upper Minkowski content if for all $\delta\in (0,\frac{1}{2})$, 
$$N(T,\delta)\lesssim \delta^{-s},$$
where $N(T,\delta)$ denotes  the minimal number of balls of radius $\delta$ needed to cover 
$T.$ They proved the following result.
\begin{theorem}[\cite{NikodymSetsAndMaximalFunctionsAssociatedWithSpheres},Theorem 1.7]\label{NikodymSetsAndMaximalFunctionsAssociatedWithSpheresNT}
	Let $d\geq2$ and $0\leq s<d-1$. Let $T\subset \mathbb{R}^{d}$ be a compact set with 
	finite $s$-dimensional upper Minkowski content. Then $A^{T}$ is bounded from 
	$L^{p}(\mathbb{R}^{d})$ to itself in each of the following cases.  
	\begin{enumerate}[label=(\roman*)]
		\item when $d=2$ and $p>1+s$,
		\item when $d=3$ and $$p>1+\min\Big(\frac{s}{2},\frac{1}{3-s},\frac{5-2s}{9-4s}\Big),$$
		\item when $d\geq4$ and $$p>1+\min\Big(\frac{s}{d-1},\frac{1}{d-s},\frac{d-s}{3(d-s)-2}\Big).$$
	\end{enumerate}
\end{theorem}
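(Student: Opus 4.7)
The plan is to combine a Littlewood--Paley decomposition of the spherical measure with single-scale estimates and the Minkowski content hypothesis on $T$. Using translation invariance, write $A^u f(x) = (f * d\sigma)(x+u)$, so that $A^T f(x) = \sup_{u\in T} |(f*d\sigma)(x+u)|$. Decompose $d\sigma = \sum_{j\geq 0}\sigma_j$ into Littlewood--Paley pieces with $\widehat{\sigma_j}$ supported in the shell $|\xi|\sim 2^j$, and set $A_j^u f = f*\sigma_j(\cdot+u)$ and $A_j^T f = \sup_{u\in T}|A_j^u f|$. It suffices to bound each $A_j^T$ on $L^p$ by a constant that decays geometrically in $j$ and sums.

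The single-scale estimate proceeds as follows. Since $\widehat{f*\sigma_j}$ lives in the frequency shell $|\xi|\sim 2^j$, Bernstein's inequality gives $\|\nabla(f*\sigma_j)\|_p\lesssim 2^j\|f*\sigma_j\|_p$, so $f*\sigma_j$ is essentially constant on balls of radius $2^{-j}$. Cover $T$ by $N(T,2^{-j})\lesssim 2^{js}$ balls of radius $2^{-j}$ with centers forming a net $\{u_{j,k}\}$. Replacing the supremum over $T$ by the supremum over the net introduces only a tail error controllable by a Hardy--Littlewood maximal bound applied to $f*\sigma_j$, after which the trivial $\ell^p$-embedding yields
$$\|A_j^T f\|_p \lesssim \Bigl(\sum_k \|f*\sigma_j(\cdot+u_{j,k})\|_p^p\Bigr)^{1/p} \lesssim 2^{js/p}\,\|f*\sigma_j\|_p.$$

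The three exponents in (i)--(iii) reflect three distinct ways of estimating $\|f*\sigma_j\|_p$ (or of reorganizing the vector-valued supremum). The bound $s/(d-1)$ comes from the sharp $L^2$ stationary-phase decay $\|f*\sigma_j\|_2 \lesssim 2^{-j(d-1)/2}\|f\|_2$, interpolated with the trivial $L^\infty$ bound and balanced against the loss $2^{js/p}$. The bound $1/(d-s)$ is obtained by embedding the $u$-supremum into the full spherical maximal operator $M_{full}$ (bounded on $L^p$ for $p>d/(d-1)$ by Stein and Bourgain) and using the Minkowski content of $T$ only to distribute scales. The third bound comes from Littman's single-scale $L^p\to L^q$ improving estimate \eqref{Littman} applied to each $\sigma_j$: working in $L^q$ makes the $\ell^p$-summation over the net cheaper, after which one returns to $L^p$ by H\"older or by a vector-valued interpolation along an edge of the Littman triangle. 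Optimizing the exponents in each avenue and summing the resulting geometric series in $j$ produces the three $L^p$-ranges, and their minimum appears naturally because the three approaches are combined through interpolation and covering arguments that always cost the same $2^{js/p}$ factor.

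The main obstacle will be avenue three: matching the precise fractions $(d-s)/(3(d-s)-2)$ and $(5-2s)/(9-4s)$ requires a delicate double interpolation along the sharp $L^p$-improving edge, with careful bookkeeping of the $2^{-j}$ scale, the net cardinality, and the cost of returning from $L^q$ to $L^p$. A secondary difficulty is handling uniformly in $j$ the error within each ball of the $2^{-j}$-net, which relies on Bernstein smoothness of $f*\sigma_j$ at scale $2^{-j}$ together with a vector-valued Hardy--Littlewood maximal inequality; this is where the low-dimensional cases $d=2,3$ and the higher-dimensional case $d\geq 4$ differ slightly in the exponents obtained.
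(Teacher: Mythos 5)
This theorem is Theorem~1.7 of Chang, Dosidis and Kim \cite{NikodymSetsAndMaximalFunctionsAssociatedWithSpheres}; the present paper does not reprove it but only cites it (and proves a lacunary analogue, Theorem~\ref{NT}, using the same ingredients). Your overall scaffold---Littlewood--Paley decomposition of $d\sigma$, covering $T$ by $N(T,2^{-j})\lesssim 2^{js}$ balls to collapse the supremum into an $\ell^p$ sum with loss $2^{js/p}$, and interpolation of single-scale estimates---is indeed the correct skeleton and matches what is done in \cite{NikodymSetsAndMaximalFunctionsAssociatedWithSpheres} and in Section~\ref{Sec:prooflacuncentered} here. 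But the identification of the three numerology-producing inputs is off in each of the three avenues.

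The most serious issue is the third avenue. The exponents $\frac{5-2s}{9-4s}$ ($d=3$) and $\frac{d-s}{3(d-s)-2}$ ($d\geq4$) are \emph{not} reachable from Littman's $L^{\frac{d+1}{d}}\to L^{d+1}$ improving estimate, because Littman's bound is a convolution inequality that does not see the uncentered supremum over $u\in T$. Tracing the algebra, these exponents arise by interpolating the $L^2$ decay estimate
\[
\|A^T_{1,j}\|_{L^2\to L^2}\lesssim 2^{-j(d-1-s)/2}
\]
against the single-scale bounds $\|A^T_{1,j}\|_{L^{3/2}\to L^{3/2}}\lesssim j^{1/3}2^{j/6}$ (for $d\geq3$) and $\|A^T_{1,j}\|_{L^{4/3}\to L^{4/3}}\lesssim j^{1/4}2^{j/4}$ (for $d\geq4$); see \eqref{L3/2-L3/2} and \eqref{L4/3-L4/3}. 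Those $L^{3/2}$ and $L^{4/3}$ bounds come from almost-sharp estimates for the \emph{Nikodym maximal function} $N^\delta$ associated with spheres---genuinely geometric/incidence results in the Bourgain--Wolff circular-maximal tradition---not from any Littman-type interpolation. This is the hard input that your proposal replaces with a step that would not give the claimed exponents.

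Two secondary issues: your first avenue, interpolating the $L^2$ Fourier decay of $f*\sigma_j$ against the trivial $L^\infty$ bound and then multiplying by $2^{js/p}$, only produces a summable geometric series in the range $p\geq2$; the endpoint $p_0=1+\frac{s}{d-1}$ (which is $<2$ whenever $s<d-1$) instead comes from interpolating the $L^2$ decay with the $L^1$ covering bound $\|A^T_{1,j}\|_{L^1\to L^1}\lesssim 2^{js}$. And your second avenue, embedding the $u$-supremum into Stein--Bourgain's $M_{full}$, cannot give $p>1+\frac{1}{d-s}$: for small $s$ this threshold lies \emph{below} $\frac{d}{d-1}$, the sharp boundary for $M_{full}$, so the embedding loses information. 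That exponent is obtained instead by interpolating the $L^2$ decay $2^{-j(d-1-s)/2}$ against the uniform ($s$-free) $L^1$ kernel bound $\|A^T_{1,j}\|_{L^1\to L^1}\lesssim 2^j$.
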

The following result is known for the operator $S^T$. 
\begin{theorem}[\cite{NikodymSetsAndMaximalFunctionsAssociatedWithSpheres}, Theorem $1.10$]\label{NikodymSetsAndMaximalFunctionsAssociatedWithSpheresfull}
	Let $T$ be a compact subset of $\mathbb{R}^{d}$ with upper Minkowski content $0<s<d-1$, 
	\begin{enumerate}[label=(\roman*)]
		\item when $d=2$ and $0<s<1$, $S^{T}$ maps  $L^{p}(\mathbb{R}^{d})$ to $L^{p}(\mathbb{R}^{d})$ 
		for $p>p_{2,s}=2+\min\{1,\max(s,\frac{4s-2}{2-s})\}$,
		\item when $d\geq3$ and $0<s<d-1$,  $S^{T}$ maps  $L^{p}(\mathbb{R}^{d})$ to $L^{p}(\mathbb{R}^{d})$ 
		for $p>p_{d,s}=1+[d-1-s+\max(0,\min(1,(2s-d+3)/4))]^{-1}$.
	\end{enumerate}
\end{theorem}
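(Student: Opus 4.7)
\textbf{My plan} is to build on the strategy for Theorem~\ref{NikodymSetsAndMaximalFunctionsAssociatedWithSpheresNT} and add the supremum over scales $t>0$ via a Littlewood--Paley decomposition together with a Sobolev embedding in the joint $(u,t)$ variable. By the scale invariance $A^u_t f(x) = A^u(f(t\,\cdot))(x/t)$, the bound for $S^T$ reduces, after a standard lacunary-versus-local splitting, to bounding the ``localized full'' maximal operator
\[
N^T f(x) := \sup_{u \in T,\, t \in [1,2]} |A^u_t f(x)|,
\]
together with the lacunary maximal function $\sup_{k \in \mathbb Z,\, u \in T} |A^u_{2^k} f(x)|$. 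The lacunary piece is controlled by Theorem~\ref{NikodymSetsAndMaximalFunctionsAssociatedWithSpheresNT} applied scale by scale together with a square function argument; the real work lies in $N^T$.

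For $N^T$, I would decompose $f = \sum_{j \geq 0} P_j f$ into Littlewood--Paley blocks and estimate each $A^u_t P_j f$ separately. Fix a dyadic parameter $\delta \in (0, 1/2)$ and cover $T$ by $\lesssim \delta^{-s}$ balls $B(u_\ell, \delta)$ using the Minkowski content assumption. On each such ball, apply a Sobolev embedding in $(u, t) \in B(u_\ell, \delta) \times [1,2]$:
\begin{align*}
\sup_{u \in B(u_\ell,\delta),\, t \in [1,2]} |A^u_t P_j f(x)|^p
&\lesssim \delta^{-d} \int_{B(u_\ell, 2\delta)}\!\!\int_1^2 |A^u_t P_j f(x)|^p\, dt\, du \\
&\quad + (\text{fractional-derivative correction in } (u,t)).
\end{align*}
Each derivative in $u$ or $t$ hits the frequency-$2^j$ sphere kernel and costs a factor of $2^j$, so the correction term scales like $(2^j \delta)^{p\rho}$ for an appropriate Sobolev order $\rho > d/p$. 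Integrating in $x$ and invoking the fixed-scale $L^p \to L^p$ bound (or, when advantageous, the $L^p$-improving estimate~\eqref{Littman} interpolated against the Fourier decay $|\widehat\sigma(\xi)| \lesssim |\xi|^{-(d-1)/2}$) gives an estimate of the form
\[
\|N^T_j f\|_p^p \lesssim \delta^{-s}\bigl(1 + (2^j\delta)^{p\rho}\bigr)\, 2^{-j\beta(p)} \|P_j f\|_p^p,
\]
where $\beta(p) > 0$ quantifies the gain from the Fourier decay and the Littman bound combined.

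Finally, I would choose $\delta = 2^{-j\alpha}$ so as to balance the covering loss $\delta^{-s}$ and the Sobolev loss $(2^j\delta)^{p\rho}$ against the frequency gain $2^{-j\beta(p)}$, and then sum the resulting geometric series in $j$. Convergence of this series for $p$ slightly above the threshold should yield exactly the range $p > p_{d,s}$. The main obstacle is to carry out this optimization \emph{sharply}: the piecewise definition of $p_{d,s}$ through $\max(0,\min(1, (2s-d+3)/4))$ signals that distinct regimes govern different ranges of $s$ relative to $d$ --- the dominant contribution may come from Fourier decay, from $L^p$-improving at single scale, or from the Sobolev derivative cost. Tracking which of these mechanisms binds, and matching the resulting exponent to the formula exactly, is the heart of the argument. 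The case $d = 2$ must be handled separately since the weaker Fourier decay $|\widehat\sigma(\xi)| \lesssim |\xi|^{-1/2}$ forces heavier reliance on Littman's estimate and produces the different expression $p_{2,s}$.
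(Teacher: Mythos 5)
This statement is Theorem~1.10 of Chang--Dosidis--Kim~\cite{NikodymSetsAndMaximalFunctionsAssociatedWithSpheres}; the present paper only \emph{cites} it as background and never gives a proof, so there is no in-paper argument to compare against. That said, your sketch has a genuine gap worth naming. You reduce to the single-scale full operator $N^T$, cover $T$ by $\delta^{-s}$ balls, and propose to control $\sup_{u,t}$ via a Sobolev embedding in $(u,t)$, with each derivative costing $2^j$. The trouble is that Sobolev embedding in the $t$-variable is a genuinely lossy way to control $\sup_{t\in[1,2]}$: it trades $1/p$ of a derivative for the sup, and paired with the crude kernel decay this does not produce the sharp threshold. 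The piecewise term $\max(0,\min(1,(2s-d+3)/4))$ in $p_{d,s}$ --- equivalently the exponent $1+\frac{4}{3d-1-2s}$ in the intermediate regime --- carries the footprint of \emph{local smoothing} estimates for spherical averages (Mockenhaupt--Seeger--Sogge type), which gain roughly a quarter derivative in $L^p$ over the range $t\in[1,2]$. Your proposal never invokes this mechanism, and as written the optimization over $\delta$ and the balance against $2^{-j\beta(p)}$ is left entirely open; you yourself flag that ``matching the resulting exponent to the formula exactly is the heart of the argument.'' Without replacing the Sobolev step in $t$ by a local smoothing input (or an equivalent square-function estimate with the correct derivative gain), the scheme will land at a strictly worse exponent than $p_{d,s}$. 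The $d=2$ branch has the further issue that Bourgain's circular maximal theorem (not just Littman plus Fourier decay) is needed even in the $T=\{0\}$ case, so relying more heavily on Littman's bound, as you suggest, is also not sufficient on its own.
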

In the same paper, the authors have obtained certain necessary conditions on $p$ for the $L^{p}$-boundedness 
of $S^{T}$ to hold. These necessary conditions are sharp in some cases.

The primary goal of this paper is to  
introduce the bilinear analogues of maximal functions discussed as above and 
establish 
the desired 
$L^p$-estimates for them. This constitutes the main body of the paper. 
However, we notice that some of the 
important questions about their linear counterparts are not yet completely 
addressed. 
Therefore, we take up the study of these questions in the second part of 
the paper. In particular, 
we obtain $L^p$-improving properties of spherical averages and $L^p$-estimates 
for lacunary maximal 
functions in the linear case. These results complement the work done 
in~\cite{NikodymSetsAndMaximalFunctionsAssociatedWithSpheres}. 

We describe the main results of this paper in linear and bilinear setting 
separately in the following two sections. 
\section{Main results in the linear case}\label{Sec:linearcase}
\subsection{Results for lacunary maximal function $N^{T}_{lac}$}\label{Sec:lacuncentered}
Motivated by the discussion above, we consider the lacunary analogue of maximal 
averages and show that the 
range of $p$ in Theorem \ref{NikodymSetsAndMaximalFunctionsAssociatedWithSpheresfull} could be improved 
significantly for the lacunary uncentered spherical maximal function. This is 
defined as follows  
$$N^{T}_{lac}f(x):=\sup_{k\in\mathbb{Z}}\Big|A^{T}_{2^k}f(x)\Big|=\sup_{u\in T,k\in\mathbb{Z}}\Big|\int_{\mathbb{S}^{d-1}}f(x+2^{k}(u+y))d\sigma(y)\Big|.$$
Once again, observe that if $T=\{0\},$ the operator $N^{0}_{lac}$ coincides with 
the classical 
lacunary spherical maximal function $M_{lac}.$ We would like to refer the reader 
to  
Calder\'{o}n \cite{CalderonLacunarySphericalMeans}, Coifman-Weiss \cite{CoifmanWeissBookReview}, 
Lacey~\cite{SparseBoundsForSphericalMaximalFunction} for $L^p$-estimates of 
$N^{0}_{lac}=M_{lac}$ 
for $1<p\leq\infty$. Also, see Seeger, Tao and Wright~\cite{EndpointMappingPropertiesOfSphericalMaximalOperators} 
for weak-type  $L\log\log L$-estimate for the operator $M_{lac}.$
We have the following result for the lacunary maximal function $N^{T}_{lac}$. 
\begin{theorem}\label{NT}
	Let $d\geq 2$ and $0\leq s<d-1$. Suppose that $T\subset \R^d$ is a compact 
	set with finite $s$-dimensional 
	upper Minkowski content. Then $N^{T}_{lac}$ is bounded on $L^p(\R^d)$ for 
	each of the following cases. 
	\begin{enumerate}[label=(\roman*)]
		\item $d=2$ and  $p> 1 + s,$
		\item $d=3$ and $p> 1+ \min\left(\frac{s}{2}, \frac{1}{3-s}, \frac{5-2s}{9-4s}\right),$
		\item $ d\geq 4$ and
		$p> 1+ \min\left(\frac{s}{d-1},\frac{1}{d-s}, \frac{d-s}{3(d-s)-2}\right).$
		\item Moreover, $N_{lac}^T$ is of restricted weak-type $(p,p)$ at the endpoint $p=1+\min\left(\frac{s}{d-1},\frac{1}{d-s}\right)$.
	\end{enumerate}
	
\end{theorem}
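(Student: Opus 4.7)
The strategy is to combine the single-scale bounds of Theorem~\ref{NikodymSetsAndMaximalFunctionsAssociatedWithSpheresNT} with a Littlewood--Paley decomposition that exploits the lacunary structure in $k$. This is the standard paradigm for lacunary maximal operators and predicts that the admissible range of $p$ for $N^T_{lac}$ coincides with the open range for the single-scale operator $A^T$, as the theorem asserts.

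\emph{Frequency decomposition.} Pick a smooth radial $\beta$ with $\beta\equiv 1$ on $\{|\xi|\le 1\}$ and supported in $\{|\xi|\le 2\}$, set $\psi(\xi):=\beta(\xi/2)-\beta(\xi)$. Using $\widehat{A^u_{2^k}f}(\xi)=e^{2\pi i 2^k u\cdot\xi}\widehat{\sigma}(2^k\xi)\widehat f(\xi)$, one obtains
\[
A^u_{2^k}f \;=\; A^{u,\mathrm{low}}_{2^k}f + \sum_{j\ge 1} A^{u,j}_{2^k}f,
\]
where $A^{u,j}_{2^k}$ has multiplier $e^{2\pi i 2^k u\cdot\xi}\psi(2^{-j}2^k\xi)\widehat\sigma(2^k\xi)$ and is thus frequency-localized to $|\xi|\sim 2^{j-k}$.

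\emph{Low-frequency piece.} The kernel equals $2^{-kd}G(2^{-k}x+u)$ for the fixed Schwartz function $G$ with $\widehat G=\beta\widehat\sigma$. For $T\subset B(0,R)$ a straightforward pointwise estimate based on the Schwartz decay of $G$ gives $\sup_{u\in T,\,k\in\Z}|A^{u,\mathrm{low}}_{2^k}f(x)|\lesssim Mf(x)$, with $M$ the Hardy--Littlewood maximal operator, which is $L^p$-bounded for every $p>1$.

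\emph{High-frequency pieces.} For each $j\ge 1$ the plan is to establish
\[
\bigl\|\sup_{u\in T}|A^{u,j}_{2^k}g|\bigr\|_{L^p} \;\lesssim\; 2^{-j\delta(p)}\,\|g\|_{L^p}, \qquad \delta(p)>0,
\]
uniformly in $k$, for $p$ in the open range of Theorem~\ref{NikodymSetsAndMaximalFunctionsAssociatedWithSpheresNT}. The decay comes by Riesz--Thorin interpolation between an $L^2$-gain of order $2^{-j(d-1)/2}$ (coming from $|\widehat\sigma(\xi)|\lesssim|\xi|^{-(d-1)/2}$) and an $L^p$-bound obtained by running the proof of Theorem~\ref{NikodymSetsAndMaximalFunctionsAssociatedWithSpheresNT} on a single frequency-localized piece. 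Since $A^{u,j}_{2^k}g = A^{u,j}_{2^k}P_{j-k}g$ with $P_{j-k}$ the standard Littlewood--Paley projector to $|\xi|\sim 2^{j-k}$, a Fefferman--Stein vector-valued extension of the sublinear operator $\sup_{u\in T}|A^{u,j}_{2^k}\,\cdot\,|$, combined with the frequency-disjointness of $\{P_{j-k}f\}_k$, yields
\[
\Bigl\|\sup_{k\in\Z,\,u\in T}|A^{u,j}_{2^k}f|\Bigr\|_{L^p} \;\lesssim\; 2^{-j\delta(p)}\,\|f\|_{L^p}.
\]
Summing the geometric series in $j$ gives parts~(i)--(iii). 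For the restricted weak-type statement~(iv), I would apply the same decomposition to $f=\mathbf 1_E$ and use restricted weak-type interpolation at the endpoint of the relevant regime, using the decay in $j$ to absorb logarithmic losses from summing the pieces.

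The main obstacle is the high-frequency step: obtaining a quantitative decay $\delta(p)>0$ uniformly in the full range of Theorem~\ref{NikodymSetsAndMaximalFunctionsAssociatedWithSpheresNT}, whose proof naturally splits into three regimes corresponding to the three terms in the $\min$. Each regime demands its own interpolation, and for $p<2$, where the naive square function bound fails, one must linearize the supremum $\sup_{u\in T}$ to a positive linear operator before Fefferman--Stein can be invoked on the $\ell^2_k$-square function.
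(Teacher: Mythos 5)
Your skeleton matches the paper's: decompose $N_{lac}^T$ via a Littlewood--Paley partition of unity, observe the low-frequency piece is dominated by the Hardy--Littlewood maximal function, and seek $j$-decay for the high-frequency pieces $M_j^T f = \sup_k \sup_{u\in T}|f*\psi_{2^{k-j}}*\sigma_{2^k}(\cdot+2^k u)|$. However, the high-frequency step --- which is the entire content of the theorem --- is where your argument has a real gap, and the tools you name are not the ones that make it work.

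First, a quantitative error: you quote the $L^2$-gain for the single-scale piece as $2^{-j(d-1)/2}$, from Fourier decay of $\widehat\sigma$ alone. But the supremum over $u\in T$ costs an extra factor $2^{js/2}$, which is controlled by a covering lemma for $T$ (Lemma~\ref{coveringlemma} of the paper); the correct gain is $2^{-j(d-1-s)/2}$. Omitting this changes the arithmetic of every interpolation in the proof, and the covering lemma is also what powers the $L^1$, $L^{3/2}$ and $L^{4/3}$ single-scale estimates from~\cite{NikodymSetsAndMaximalFunctionsAssociatedWithSpheres} that the theorem ultimately hinges on.

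Second, and more seriously, your appeal to Fefferman--Stein does not get you from the single-scale bound to the lacunary bound for $p<2$. If you dominate the kernel by $|\psi_{2^{-j}}*\sigma|$ (or linearize $\sup_{u\in T}$ to a positive operator) so that Minkowski's inequality gives the $\ell^2$-valued bound for free, you have destroyed exactly the cancellation that produces the $j$-decay; the resulting $\ell^2$-valued operator is $L^p$-bounded with constant $O(1)$, not $O(2^{-j\delta})$, and the series in $j$ diverges. There is no Fefferman--Stein-type theorem that upgrades an $L^p$ bound for a sublinear operator with a supremum into an $\ell^2$-valued bound with the \emph{same decaying constant} when $p<2$. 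The paper's way around this is Lemma~\ref{vector}: interpolate the vector-valued operator $\boldsymbol{\vec A}(\{f_k\})=\{A^T_{2^k,j}f_k\}$ between $L^{p_1}(\ell^{p_1})\to L^{p_1}(\ell^{p_1})$ (which holds automatically, with the single-scale constant $C_1$, for any sublinear operator) and $L^{p_2}(\ell^\infty)\to L^{p_2}(\ell^\infty)$ (which uses a previously established $L^{p_2}$ bound on $M_j^T$ itself), landing on $L^p(\ell^2)$ with constant $C_1^{p_1/2}C_2^{1-p_1/2}$; the $\ell^2$ bound plus Littlewood--Paley in $k$ then gives the lacunary bound at the new $p<p_2$. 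Iterating this with $C_1$ taken from the $L^1$, $L^{3/2}$, $L^{4/3}$ estimates and with the $L^2$ decay as seed gives Lemma~\ref{Mjestimates}. This bootstrap is the genuinely new mechanism; Riesz--Thorin plus Fefferman--Stein, as you describe them, do not replace it.

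Finally, for part~(iv) the paper does not merely "absorb logarithmic losses": it sums the pieces $M_j^T$ using Bourgain's interpolation lemma (Lemma~\ref{Bourgain}), which converts a family of operators $T_j$ with exponential \emph{growth} $2^{\epsilon_1 j}$ at one exponent and exponential \emph{decay} $2^{-\epsilon_2 j}$ at another into a restricted weak-type bound exactly at the critical intermediate exponent. Without naming this mechanism, the endpoint statements at $p=1+\min\{\tfrac{s}{d-1},\tfrac{1}{d-s}\}$ do not follow.
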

\begin{remark}
	Observe that the range of $p$ in Theorems \ref{NikodymSetsAndMaximalFunctionsAssociatedWithSpheresNT} 
	and \ref{NT} are the same.
\end{remark}
\begin{remark}
	Theorem \ref{NT} implies the pointwise a.e. convergence of the averaging 
	operator 
	$A^{T}_{2^{k}}f$, see Section $7$ in \cite{OnPointwisea.e.ConvergenceOfMultilinearOperators} 
	for the necessary details.
\end{remark}

\subsection{$L^p-$improving estimates}

We extend the $L^p$-estimates of $A^T$ to the off-diagonal range. These are 
referred to as $L^{p}$-improving estimates for the operator $A^{T}$. This 
completes the picture of $L^{p}\to L^{q}$-estimates for the operator $A^{T}$ 
with $p\leq q.$ We would like to point out that $L^{p}$-improving estimates 
for averaging operators play a crucial role in proving sparse domination of 
the corresponding maximal operators. For example, 
see \cite{SparseBoundsForSphericalMaximalFunction} for this connection 
in the context of classical spherical maximal functions. We also refer 
to \cite{RSS,Palssonbilinearsparse,BFOPZ} for sparse domination of bilinear 
spherical maximal functions. However, we do not pursue the direction of sparse 
domination of maximal operators in this paper.

We need to introduce some notation in order to state the $L^p$-improving results. 

Let $\Delta$ denote the closed triangle with vertices $A,H$ and $E$, where  $A=(0,0)$, $E=\left(\frac{d-s}{d-s+1},\frac{1}{d-s+1}\right)$ and 
\begin{itemize}
	\item $H=\Big(\frac{1}{1+s}, \frac{1}{1+s}\Big),$ if $d=2$, 
	\item $H=\left(\left(1+\min\{\frac{s}{2},\frac{1}{3-s},\frac{5-2s}{9-4s}\}\right)^{-1},\left(1+\min\{\frac{s}{2},\frac{1}{3-s},\frac{5-2s}{9-4s}\}\right)^{-1}\right),$ if $d=3$,
	\item $H=\left(\left(1+\min\{\frac{s}{d-1},\frac{1}{d-s},\frac{d-s}{3(d-s)-2}\}\right)^{-1},\left(1+\min\{\frac{s}{d-1},\frac{1}{d-s},\frac{d-s}{3(d-s)-2}\}\right)^{-1}\right),$ if $d\geq4$.
\end{itemize}
Observe that, if $d\geq4$ and $d-2< s<d-1,$ then 
$\min\{\frac{s}{d-1},\frac{1}{d-s},\frac{d-s}{3(d-s)-2}\}=\frac{d-s}{3(d-s)-2}.$ Similarly, if $d=3$ and $\frac{3}{2}< s<2$, then  
$\min\{\frac{s}{2},\frac{1}{3-s},\frac{5-2s}{9-4s}\}=\frac{5-2s}{9-4s}.$
With the notation as above, we have the following $L^p$-improving properties of $A^T.$ 
\begin{theorem}\label{Lp improving}
	Let $T\subset\R^d$ be a compact set with finite $s$-dimensional upper Minkowski content for $0\leq s<d-1$. Then $A^{T}$ is bounded from $L^p(\mathbb{R}^{d})$ into $L^q(\mathbb{R}^{d})$ for each of the following cases. 
	\begin{enumerate}[label=(\roman*)]
		\item when $d=2$ and $(\frac{1}{p},\frac{1}{q})\in \Delta\setminus\{E,H\}$,
		\item when $d=3$, $s\leq\frac{3}{2}$ and $(\frac{1}{p},\frac{1}{q})\in \Delta\setminus \{E,H\}$,  and for $\frac{3}{2}<s<2$  and  $(\frac{1}{p},\frac{1}{q})\in \Delta\setminus [E,H]$,
		\item when $d\geq4$, $s\leq d-2$ and   $(\frac{1}{p},\frac{1}{q})\in \Delta\setminus \{E,H\}$, and for $d-2<s<d-1$  and  $(\frac{1}{p},\frac{1}{q})\in \Delta\setminus [E,H]$. 
	\end{enumerate}
	Moreover, $A^{T}$ is of restricted weak-type $({p},{q})$, i.e., it is bounded 
	from the Lorentz space $L^{p,1}(\R^d)$ into $L^{q,\infty}(\R^d),$ for each of 
	the following end-points.  
	\begin{enumerate}[label=(\roman*)]
		\item when $d=2$ and $(\frac{1}{p},\frac{1}{q})=E,H$,
		\item when $d=3$, $s\leq\frac{3}{2}$ and $(\frac{1}{p},\frac{1}{q})=E,H$, and $\frac{3}{2}<s<2$ and  $(\frac{1}{p},\frac{1}{q})=E$,
		\item when $d\geq4$, $s\leq d-2$ and  $(\frac{1}{p},\frac{1}{q})=E,H$, and $d-2<s<d-1$ and  $(\frac{1}{p},\frac{1}{q})=E$.
	\end{enumerate}
\end{theorem}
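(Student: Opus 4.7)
The plan is to prove the theorem by interpolation from estimates at the three vertices of $\Delta$. At $A=(0,0)$ we have the trivial bound $\|A^T f\|_\infty\le\|f\|_\infty$, since each individual average $A^u$ is an $L^\infty$-contraction. At $H$, Theorem~\ref{NikodymSetsAndMaximalFunctionsAssociatedWithSpheresNT} gives the strong $L^{p}\to L^{p}$ bound for every $p$ strictly larger than $p_H:=1+\min\{\tfrac{s}{d-1},\tfrac{1}{d-s},\tfrac{d-s}{3(d-s)-2}\}$. In the regimes in which the minimum defining $p_H$ is realized by $\tfrac{s}{d-1}$ or $\tfrac{1}{d-s}$, one can push the underlying covering-type or Littman-type argument of \cite{NikodymSetsAndMaximalFunctionsAssociatedWithSpheres} to the restricted weak-type $L^{p_H,1}\to L^{p_H,\infty}$ bound at the endpoint itself. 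With restricted weak-type estimates in hand at $A$, at $H$ (where claimed), and at $E$, Marcinkiewicz-type interpolation for Lorentz spaces then delivers the strong $L^{p}\to L^{q}$ bound at every $(1/p,1/q)$ in the open triangle $\Delta$.

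The main new ingredient is the restricted weak-type $L^{p_E,1}\to L^{q_E,\infty}$ bound at $E=(\tfrac{d-s}{d-s+1},\tfrac{1}{d-s+1})$. It suffices to show, for every measurable $F\subset\R^d$ and every $\lambda>0$,
\[
\bigl|\{x\in\R^d:A^T\chi_F(x)>\lambda\}\bigr|\;\lesssim\;\lambda^{-q_E}\,|F|^{q_E/p_E}.
\]
I would prove this via a single-scale analysis. Choose $r$ by the relation $r^{d-1}\sim\lambda$, the height at which the elementary spherical average of the indicator of a radius-$r$ ball saturates. Cover $T$ by $N\lesssim r^{-s}$ balls $B(u_j,r)$, so that for every $x$ in the level set there exist some $j$ and $|v|\le r$ with $A^{u_j}\chi_F(x+v)>\lambda$; this geometric implication forces a lower bound on $|F\cap\mathrm{Ann}_r(x+u_j)|$, where $\mathrm{Ann}_r(p)$ denotes the $r$-neighborhood of the unit sphere centered at $p$. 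Double-counting the mass of $F$ across the $N$ annular shells then controls the level set: the $s$-dimensional Minkowski content of $T$ contributes a factor $r^{-s}$ from the cover, the $r$-annular thickness contributes a factor $r$, and these combine with Littman's $L^{p_0}\to L^{q_0}$ single-scale improvement to produce exactly the exponent $q_E=d-s+1$.

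The principal obstacle is executing this single-scale analysis so that the geometric count and the $L^p$-improving gain combine to give the sharp exponents at $E$; in particular one must reduce the general $F$ to its extremal configuration (essentially a ball, or a union of well-separated balls) via a Calder\'on--Zygmund/Whitney-type decomposition adapted to the level sets of $A^T\chi_F$, and then sum the pieces in the quasi-norm $L^{q_E,\infty}$. A secondary obstacle is the restricted weak-type at $H$ in the regimes where the minimum is realized by $\tfrac{d-s}{3(d-s)-2}$ (namely $d=3,\ \tfrac32<s<2$ and $d\ge 4,\ d-2<s<d-1$); the $L^2$-Sobolev/square-function argument underlying the strong $L^p\to L^p$ bound for $p>p_H$ does not obviously furnish a restricted weak-type endpoint, which is precisely why $H$ is omitted from the restricted weak-type statements in those ranges.
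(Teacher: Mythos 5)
Your overall structure -- interpolate from restricted weak-type bounds at the three vertices $A$, $H$, $E$ of $\Delta$, and observe that $H$ only admits a restricted weak-type endpoint in the regimes where $p_H$ is realized by $\frac{s}{d-1}$ or $\frac{1}{d-s}$ -- agrees with the paper. However, your single-scale geometric strategy for the endpoint at $E$ contains a genuine gap, and as sketched it would not produce the sharp exponent.

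The issue is quantitative. Fix $r$ with $r^{d-1}\sim\lambda$ and cover $T$ by $N\lesssim r^{-s}$ balls of radius $r$ centered at $u_1,\dots,u_N$. If you estimate the level set by a union bound over the covering,
\[
|\{A^T\chi_F>\lambda\}|\lesssim\sum_{j=1}^N|\{A^{u_j}\chi_F>\lambda\}|\lesssim N\cdot\lambda^{-(d+1)}|F|^d\sim\lambda^{-(d+1)-\frac{s}{d-1}}|F|^d,
\]
which has the wrong exponents compared with the target $\lambda^{-(d-s+1)}|F|^{d-s}$. A "double-counting" argument must therefore beat the union bound by exploiting overlap of the superlevel sets, and it is not clear how to do this directly at the level of indicator functions. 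There is also a second difficulty: "for every $x$ in the level set there exist $j$ and $|v|\le r$ with $A^{u_j}\chi_F(x+v)>\lambda$" requires stability of the raw spherical average under $r$-translations of the center, which fails pointwise; the average over a single sphere can change drastically when the center moves by $r$. Both obstructions are precisely what the paper's mechanism is designed to avoid. The paper decomposes $A^T_1 f\leq\sum_{j\geq0}A^T_{1,j}f$ with $A^T_{1,j}f(x)=\sup_{u\in T}|f*\psi_{2^{-j}}*\sigma(x+u)|$, so that each piece is band-limited at frequency $2^j$ and the stability under $2^{-j}$-shifts is automatic. The covering count then enters through the $L^p$-norm identity of Lemma~\ref{coveringlemma}, contributing a factor $N(T,2^{-j})^{1/q}$ rather than $N(T,2^{-j})$; this is what reconciles the exponents. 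The paper then records three competing bounds,
\[
\|A^T_{1,j}\|_{L^1\to L^1}\lesssim\min\{2^{js},2^j\},\qquad
\|A^T_{1,j}\|_{L^{\frac{d+1}{d}}\to L^{d+1}}\lesssim 2^{\frac{js}{d+1}},\qquad
\|A^T_{1,j}\|_{L^2\to L^2}\lesssim 2^{-j\frac{d-1-s}{2}},
\]
and applies Bourgain's interpolation lemma (Lemma~\ref{Bourgain}) to the growing and decaying sequences; the restricted weak-type endpoint at $E$ falls out of the Littman-based estimate against the $L^2$ decay, and the endpoint at $H$ falls out of the $L^1$ estimate against the $L^2$ decay. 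You should also note that your restricted weak-type claim at $H$, attributed to "pushing the covering-type or Littman-type argument," is actually delivered by this same interpolation-of-dyadic-pieces mechanism, not by a direct refinement of the covering argument.
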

The following remarks point out the sharpness of some indices in~Theorem \ref{Lp improving}.
\begin{rem}
	For $0<s\leq 1$, the restricted weak-type estimate at the point $H$ is sharp in the sense that  $A^T$ does not map $L^{p,r}(\R^d)$ into $L^{q,\infty}(\R^d)$ boundedly for any $r>1$ and $(\frac{1}{p},\frac{1}{q})=H$. We will provide an example in Section \ref{Sec:nec} to support this assertion. 
\end{rem}

\begin{rem}\label{sharpnessatE}
	The point $E$ in Theorem~\ref{Lp improving} is sharp. This can be verified by taking  $T=\{0\}^{d-\lceil s\rceil}\times C_s,$ where $C_s\subset\R^{\lceil s\rceil}$ is a self similar $s$-dimensional set. By taking $f=\chi_{B(0,\delta)}$ for a small $\delta>0,$ we can show that the operator $A^{T}$ is unbounded from $L^p(\R^d)$ to $L^q(\R^d)$ if 
	\[\frac{d}{(d-1)p}-\frac{1-s}{(d-1)q}>1.\]
	This shows the sharpness of the point $E$. 
\end{rem}
\subsection{Results for Nikodym maximal functions associated with sphere}\label{Sec:Nikodym}
The study of Kakeya and Nikodym maximal functions is a classical topic in harmonic analysis and 
geometric measure theory. These objects have been greatly studied to understand some of the most 
interesting phenomena in harmonic analysis, which include the Fourier restriction conjecture and 
the Bochner-Riesz problem. We refer the reader 
to~\cite{TheKakeyaMaximalFunctionAndTheSphericalSummationMultipliers, 
BesicovitchTypeMaximalOperatorsAndApplicationsToFourierAnalysis, 
MattilaGeometryOfSetsMeasureEucli} for more details on these maximal functions. 
Recently, in~\cite{NikodymSetsAndMaximalFunctionsAssociatedWithSpheres}, the authors 
introduced the Nikodym maximal function associated with spheres and studied its $L^p$-boundedness 
properties. In this article, we are concerned with the lacunary analogues of these maximal 
functions in linear and bilinear setting. 

Let $0<\delta<\frac{1}{2}$, the Nikodym maximal function $N^{\delta}$ associated with sphere 
$\mathbb{S}^{d-1}$ is defined by 
$$N^{\delta}f(x):=\sup_{u\in \mathbb{S}^{d-1}}\frac{1}{|{S}^{\delta}(0)|}
\Big|\int_{{S}^{\delta}(0)}f(x+u+y)~dy\Big|,$$ where ${S}^{\delta}(0)$ denotes 
the $\delta$- neighborhood  of the unit sphere $\mathbb{S}^{d-1}$. 
The $L^{p}$-estimates for the operator $N^{\delta}$ are studied in 
\cite{NikodymSetsAndMaximalFunctionsAssociatedWithSpheres}.  

Consider the corresponding lacunary maximal function defined by 
$$N^{\delta}_{lac}f(x):=\sup_{u\in \mathbb{S}^{d-1}}\sup_{k\in\mathbb{Z}}\frac{1}{|{S}^{\delta}(0)|}\Big|\int_{{S}^{\delta}(0)}f(x+2^{k}(u+y))~dy\Big|.$$  
We have the following bounds for the operator $N^{\delta}_{lac}$.
\begin{theorem}\label{Nikodymlacunary}
	Let $0<\delta<\frac{1}{2}$ and $\epsilon>0$. Then the operator $N^{\delta}_{lac}$ satisfies the following estimates
	\begin{enumerate}[label=(\roman*)]
		\item For $d=2$ we have  
		\begin{equation*}
			\|N^\delta_{lac}\|_p\lesssim \begin{cases}
				\delta^{1-\frac{2}{p}-\epsilon}\|f\|_p & \text{if } 1<p\leq2,\\
				\delta^{-\epsilon}\|f\|_p & \text{if } 2\leq p<\infty.
			\end{cases}
		\end{equation*}
		\item For $d=3$ we have  
		\begin{equation*}
			\|N^\delta_{lac}\|_p\lesssim \begin{cases}
				\delta^{\frac{3}{2}-\frac{5}{2p}-\epsilon}\|f\|_p & \text{if } 1<p\leq\frac{3}{2},\\
				\delta^{\frac{1}{2}-\frac{1}{p}-\epsilon}\|f\|_p & \text{if } \frac{3}{2}<p\leq 2,\\
				\delta^{-\epsilon}\|f\|_p & \text{if } 2\leq p<\infty.
			\end{cases}
		\end{equation*}
		\item Finally, for $ d\geq 4$ we have 
		\begin{equation*}
			\|N^\delta_{lac}\|_p\lesssim \begin{cases}
				\delta^{1-\frac{2}{p}-\epsilon}\|f\|_p & \text{if } 1<p\leq\frac{4}{3},\\
				\delta^{1-\frac{2}{p}-\epsilon}\|f\|_p & \text{if } \frac{4}{3}<p\leq2,\\
				\delta^{-\epsilon}\|f\|_p  & \text{if } 2\leq p<\infty.
			\end{cases}
		\end{equation*}
	\end{enumerate}
\end{theorem}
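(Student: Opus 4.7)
The plan is to prove Theorem~\ref{Nikodymlacunary} by establishing several endpoint $L^p$-estimates and interpolating via Marcinkiewicz. The primary endpoints are: (a) the trivial bound $\|N^\delta_{lac}\|_{\infty \to \infty} \leq 1$; (b) an $L^2 \to L^2$ bound with $\delta^{-\epsilon}$-loss obtained from Plancherel and Sobolev embedding on the sphere; and (c) the pointwise Hardy--Littlewood domination $N^\delta_{lac} f \lesssim \delta^{-1} M_{HL} f$ that gives $\|N^\delta_{lac}\|_{p \to p} \lesssim \delta^{-1}$ for all $p > 1$. For $d = 3$ an additional intermediate endpoint at $p = 3/2$ is needed to produce the slope break seen in the statement; this comes from the single-scale Nikodym maximal bound of Chang, Dosidis and Kim.

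The setup is to rewrite $A^{u,\delta}_{2^k} f(x) = (f * m^\delta_{2^k})(x + 2^k u)$, where $m^\delta$ is the normalised indicator of $S^\delta(0)$, and to decompose $m^\delta = \varphi + \nu^\delta$ for a fixed smooth bump $\varphi$ (independent of $\delta$) with $\widehat\varphi(0) = 1$, so that $\widehat{\nu^\delta}(0) = 0$. For the $\varphi$-contribution, the support of $\varphi_{2^k}(\cdot - (x + 2^k u))$ lies inside $B(x, C \cdot 2^k)$, and hence $|(f * \varphi_{2^k})(x + 2^k u)| \lesssim M_{HL} f(x)$ uniformly in $k$ and $u$, yielding the $L^p$-bound for $p > 1$ with no $\delta$-loss. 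Comparing the thick-shell volume $\sim \delta |B(x, C 2^k)|$ to its enclosing ball gives the overall pointwise bound $N^\delta_{lac} f(x) \lesssim \delta^{-1} M_{HL} f(x)$.

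For the $L^2$-estimate of the $\nu^\delta$-contribution, I would apply $\sup_k |a_k|^2 \leq \sum_k |a_k|^2$ to pass to a square function, decompose $f = \sum_j P_j f$ via Littlewood--Paley, and use a weighted Cauchy--Schwarz $(\sum_j a_j)^2 \leq C_\epsilon \sum_j 2^{\epsilon|j+k|} a_j^2$ to split across LP pieces. For each pair $(k, j)$, Sobolev embedding $H^s(\mathbb{S}^{d-1}) \hookrightarrow L^\infty$ with $s > (d-1)/2$, applied on the sphere of radius $2^k$ and combined with Plancherel, gives
\[
\|\sup_u |(P_j f * \nu^\delta_{2^k})(\cdot + 2^k u)|\|_{L^2}^2 \lesssim (1 + 2^{\ell(d-1+\epsilon)})\, |\widehat{\nu^\delta}(2^\ell)|^2\, \|P_j f\|_{L^2}^2,
\]
where $\ell = k + j$. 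With the stationary-phase decay $|\widehat{m^\delta}(\rho)| \lesssim \min\{1,\, \rho^{-(d-1)/2},\, \delta^{-1} \rho^{-(d+1)/2}\}$ and the vanishing $\widehat{\nu^\delta}(0) = 0$ (so $|\widehat{\nu^\delta}(\rho)| \lesssim \rho$ for small $\rho$), a case analysis over the three $\ell$-regimes $\ell \leq 0$, $0 \leq \ell \leq \log_2(1/\delta)$, and $\ell \geq \log_2(1/\delta)$ yields $\sum_\ell 2^{\epsilon|\ell|} (1 + 2^{\ell(d-1+\epsilon)}) |\widehat{\nu^\delta}(2^\ell)|^2 \lesssim \delta^{-2\epsilon}$, and Littlewood--Paley orthogonality then delivers $\|N^\delta_{lac}\|_{2 \to 2} \lesssim \delta^{-\epsilon}$.

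Marcinkiewicz interpolation between the $\delta^{-1}$-bound at $L^{1+}$ and the $\delta^{-\epsilon}$-bound at $L^2$ produces the claimed exponent $\delta^{1 - 2/p - \epsilon}$ for $d = 2$ and $d \geq 4$. For $d = 3$, a third interpolation node at $p = 3/2$ with $\delta^{-1/6 - \epsilon}$-loss, extracted from the single-scale result of Chang, Dosidis and Kim in three dimensions, produces the slope break at $p = 3/2$. The main obstacle is the $L^2$-bound: the Sobolev exponent $(d-1)/2 + \epsilon/2$ must precisely counter the spherical Fourier decay exponents $(d-1)/2$ and $(d+1)/2$, with a delicate transition at $2^\ell \sim \delta^{-1}$ that relies on the exact stationary-phase expansion of the $\delta$-thickened measure $\widehat{m^\delta}$. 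A secondary technical challenge in $d = 3$ is identifying the $L^{3/2}$-endpoint that matches the claimed slope break.
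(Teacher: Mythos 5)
Your outline has the right skeleton (Littlewood--Paley decomposition of $f$, an $L^2$ bound with $\delta^{-\epsilon}$-loss based on Fourier decay of the thickened sphere and a $(d-1)/2$-derivative cost for the sup over $u\in\mathbb{S}^{d-1}$, a pointwise Hardy--Littlewood bound with $\delta^{-1}$-loss near $L^1$, and interpolation), and the mechanism you describe for the $L^2$ bound — Sobolev embedding $H^{(d-1)/2+}(\mathbb{S}^{d-1})\hookrightarrow L^\infty$ — is a workable variant of what the paper does (the paper instead discretizes $\mathbb{S}^{d-1}$ at scale $2^{-j}$ and counts $\sim 2^{j(d-1)}$ balls via Lemma~\ref{coveringlemma}, which costs the same $(d-1)/2$ derivatives in $L^2$). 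With just the $L^{1+}$ and $L^2$ nodes, Marcinkiewicz interpolation gives $\delta^{1-2/p-\epsilon}$ for $1<p\leq 2$, which does reproduce the stated bounds for $d=2$ and $d\geq 4$.

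The genuine gap is in $d=3$, and you flag it yourself but do not close it: you cannot simply ``extract'' a lacunary $L^{3/2}$-endpoint for $N^\delta_{lac}$ from the single-scale bound $\|N^\delta_1\|_{L^{3/2}\to L^{3/2}}\lesssim\delta^{-1/6}(-\log\delta)^{1/3}$. A single-scale bound gives no control on $\sup_{k\in\mathbb{Z}}$, and for $p\neq 2$ there is no Plancherel/Littlewood--Paley orthogonality to sum the dyadic $k$-scales directly. The paper's proof handles this with a vector-valued bootstrapping lemma (Lemma~\ref{vector}): given a single-scale $L^{p_1}$ bound and a lacunary $L^{p_2}$ bound for the $j$-th Littlewood--Paley piece $M^\delta_j$, one interpolates the vector-valued extension in $L^{p_1}(\ell^{p_1})$ and $L^{p_2}(\ell^\infty)$ to reach $L^{p}(\ell^2)$, and then sums the scales via the square function bound. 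Starting from the $L^2$ bound $\|M^\delta_j\|_{2\to 2}\lesssim\delta^{-\epsilon}2^{-j\epsilon}$ and the single-scale $L^{3/2}$ estimate, a recursive application of that lemma yields a lacunary bound on $M^\delta_j$ with enough decay in $j$ to sum, for each $p\in(3/2,2]$, producing the slope break in $d=3$. Without this step your argument only yields $\delta^{1-2/p-\epsilon}$ for $1<p\leq 2$ in $d=3$, which is strictly weaker than the claimed $\delta^{3/2-5/(2p)-\epsilon}$ on $(1,3/2]$ and $\delta^{1/2-1/p-\epsilon}$ on $(3/2,2]$. You should therefore add the vector-valued bootstrap (or an equivalent mechanism for upgrading single-scale $L^p$ bounds to lacunary bounds at $p\neq 2$) as a central step, not as a secondary technicality.
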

\section{Main results for bilinear maximal functions}\label{Sec:bilinear}
In this section we introduce bilinear analogues of Nikodym maximal function and 
uncentered spherical maximal function.  
\subsection{Bilinear Nikodym maximal function $\mathcal N^\delta$:}\label{Sec:bilinearNikodym}  
We consider the bilinear analogue of the Nikodym maximal function $N^\delta$ defined by 
\[\mathcal N^\delta (f_1,f_2)(x):=\sup_{u,v\in\mathbb S^{d-1}}\frac{1}{|S^\delta(0)|}\left|\int_{S^\delta(0)} 
f_1(x+u+y)f_2(x+v+z)\;d(y,z)\right|,\]
where $0<\delta<\frac{1}{2}$ and $S^\delta(0)=\{(y,z)\in\R^{2d}:\;1-\delta<|(y,z)|<1+\delta\}.$

The following $L^p$-estimates for the operator $\mathcal N^\delta$ hold. 
\begin{thm}\label{Ndelta}
	Let $1\leq p_1,p_2,p_3\leq \infty$ be such that $\frac{1}{p_1}+\frac{1}{p_2}=\frac{1}{p_3}.$ Then 
	\begin{enumerate}[label=(\roman*)]
		\item	$\|\mathcal N^\delta\|_{L^{p_1}\times L^{p_2}\to L^{p_3}}\lesssim 1$ if $(p_1,p_2)\neq (1,1).$ 
		\item	$\|\mathcal N^\delta\|_{L^{1}\times L^{1}\to L^{\frac{1}{2}}}\lesssim \delta^{-1}.$
	\end{enumerate}
\end{thm}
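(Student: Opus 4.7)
The approach is to dominate $\mathcal N^\delta$ by a product of two linear uncentered averages on $\R^d$ via H\"older's inequality applied inside the $(y,z)$-integral. The key object is the marginal kernel
\[\phi(y):=\frac{m(y)}{|S^\delta(0)|},\qquad m(y):=\bigl|\{z\in\R^d:(y,z)\in S^\delta(0)\}\bigr|,\]
which is a probability density ($\int\phi=1$) supported in $B(0,1+\delta)$. Using the explicit formula $m(y)=c_d\bigl[((1+\delta)^2-|y|^2)^{d/2}-((1-\delta)^2-|y|^2)_+^{d/2}\bigr]$ together with $|S^\delta(0)|\sim\delta$, a short computation in polar coordinates shows $m(y)\lesssim\delta$, hence $\|\phi\|_\infty\lesssim 1$ uniformly in $\delta$ for every $d\geq 2$. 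Define the uncentered maximal operator $M_\phi g(x):=\sup_{u\in\s^{d-1}}(g*\phi)(x+u)$ for $g\geq 0$. Moving the supremum inside the convolution gives $M_\phi g(x)\leq (g*\tilde\phi)(x)$, where $\tilde\phi(w):=\sup_{u\in\s^{d-1}}\phi(w+u)$ is bounded and supported in $B(0,2+\delta)$; thus $\|\tilde\phi\|_1\lesssim 1$ uniformly, and Young's inequality yields $\|M_\phi g\|_q\lesssim\|g\|_q$ for every $q\in[1,\infty]$ with a $\delta$-independent constant.

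For part (i), observe that the probability measure $d\mu_\delta:=|S^\delta(0)|^{-1}\chi_{S^\delta(0)}\,d(y,z)$ on $\R^{2d}$ has both marginals equal to $\phi$. Applying H\"older in $\mu_\delta$ with conjugate exponents $r_1,r_2\in[1,\infty]$, $\tfrac1{r_1}+\tfrac1{r_2}=1$, gives the pointwise bound
\[\mathcal N^\delta(f_1,f_2)(x)\leq\bigl[M_\phi(|f_1|^{r_1})(x)\bigr]^{1/r_1}\bigl[M_\phi(|f_2|^{r_2})(x)\bigr]^{1/r_2}.\]
Since the hypothesis forces $\tfrac1{p_1}+\tfrac1{p_2}=\tfrac1{p_3}\leq 1$ (this automatically rules out $(p_1,p_2)=(1,1)$), I take $r_1=p_1$ and $r_2=p_1/(p_1-1)\leq p_2$ when $p_1>1$ (with the obvious limit choices $r_i\in\{1,\infty\}$ at the endpoints $p_1\in\{1,\infty\}$). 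H\"older in $L^{p_3}$ with exponents $p_1,p_2$, together with the $L^{p_i/r_i}\!\to\!L^{p_i/r_i}$ boundedness of $M_\phi$ applied to $|f_i|^{r_i}$ (noting $p_i/r_i\geq 1$), then gives $\|\mathcal N^\delta(f_1,f_2)\|_{p_3}\lesssim\|f_1\|_{p_1}\|f_2\|_{p_2}$ with implicit constant independent of $\delta$.

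For part (ii), the crude containment $S^\delta(0)\subset B(0,1+\delta)^2$ decouples the $y$ and $z$ integrations, yielding
\[\mathcal N^\delta(f_1,f_2)(x)\lesssim\delta^{-1}F_1(x)F_2(x),\qquad F_i(x):=\sup_{u\in\s^{d-1}}\int_{B(x+u,1+\delta)}|f_i|\leq\int_{B(x,3)}|f_i|,\]
so $\|F_i\|_1\lesssim\|f_i\|_1$ by Fubini. Cauchy--Schwarz produces $\|F_1F_2\|_{1/2}^{1/2}=\int F_1^{1/2}F_2^{1/2}\leq\|F_1\|_1^{1/2}\|F_2\|_1^{1/2}$, giving the $L^{1/2}$ bound of (ii). The principal technical obstacle is the uniform $L^\infty$-estimate on $\phi$: the cross-section $\{z:(y,z)\in S^\delta(0)\}$ changes from a thin annulus to a ball as $|y|$ crosses $1-\delta$, so a naive pointwise width estimate degenerates near $|y|=1$; the $O(\delta)$ control of $m(y)$ must instead be extracted from a volume cancellation that survives this degeneration.
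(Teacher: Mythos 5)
Your proposal is correct. In part (i) you take essentially the paper's route: both proofs open with H\"older's inequality inside the $(y,z)$-integral against the normalized measure on $S^\delta(0)$, and both then reduce to the uniform boundedness of a linear uncentered maximal operator built from the $y$-marginal of that measure. The paper packages the linear piece as $N_pf$, proves $\|N_pf\|_p\lesssim\|f\|_p$ by the slicing change of variables carrying the factor $(1-|y|^2)^{(d-2)/2}$ (which is exactly your $m(y)$ up to normalization), and then interpolates with $L^\infty\times L^\infty\to L^\infty$; you instead isolate the marginal kernel $\phi$ explicitly, dominate $M_\phi g\leq g*\tilde\phi$ with $\|\tilde\phi\|_1\lesssim1$, and invoke Young's inequality to get all Banach exponents at once. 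Same geometry, slightly different bookkeeping. Where you genuinely diverge is part (ii): the paper proves only the bound $\|\mathcal N^\delta(f_1,f_2)\|_1\lesssim\delta^{-1}\|f_1\|_1\|f_2\|_1$ and then descends to $L^{1/2}$ by appealing to the abstract localization and interpolation principle of Lemma~\ref{Sovine} from \cite{IPS}, which requires checking support and separation hypotheses on the operator. Your pointwise factorization $\mathcal N^\delta(f_1,f_2)\lesssim\delta^{-1}F_1F_2$ followed by Cauchy--Schwarz, $\int(F_1F_2)^{1/2}\leq\|F_1\|_1^{1/2}\|F_2\|_1^{1/2}$, is more elementary and entirely self-contained; it yields a shorter argument that avoids invoking the external lemma, at no loss of generality here. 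The one place where your write-up should be tightened is the uniform bound $m(y)\lesssim\delta$: you flag the degeneration of the cross-section near $|y|=1-\delta$ as the technical crux but leave the verification implicit. The bound does hold for $d\geq2$ (the worst case is $m(y)\sim(4\delta)^{d/2}\lesssim\delta$ when $(1-\delta)^2-|y|^2$ is small), so the claim is sound; you should simply record the two-case computation rather than assert it.
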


\subsection{Bilinear uncentered spherical maximal functions}
Let  $T\subset \R^d, d\geq 1,$ be a compact set with finite $s$-dimensional upper Minkowski content, where 
$0\leq s\leq d$. Observe that here we allow $d=1$ as well. Define the   uncentered bilinear spherical 
average of scale $t$ associated with $T$ by
\[\mathcal N_{t}^T (f_1,f_2)(x):=\sup_{u,v\in T}\mathcal{A}^{u,v}_t(f_1,f_2)(x):=\sup_{u,v\in T}\left|\int_{\mathbb{S}^{2d-1}} f_1(x+t(u+y))f_2(x+t(v+z))\;d\sigma_{2d-1}(y,z)\right|.\] 
Note that this can be rewritten in terms of Fourier transform as follows 
$$\mathcal N_t^T (f_1,f_2)(x)=\sup_{u,v\in T}\Big|\int_{\mathbb{R}^{2d}}
\widehat{\sigma}_{2d-1}(t\xi,t\eta)e^{2\pi\iota t(u\cdot\xi+v\cdot\eta)}
\widehat{f_1}(\xi)\widehat{f_2}(\eta)e^{2\pi\iota x\cdot(\xi+\eta)}~d\xi d\eta\Big|,$$
where $\widehat{\sigma}_{2d-1}$ denotes the Fourier transform of surface measure 
of the sphere $\mathbb{S}^{2d-1}$. 

We define the corresponding full and lacunary uncentered bilinear spherical 
maximal functions as
\[\mathcal N_{full}^T (f_1,f_2)(x):=\sup_{t>0}\;\mathcal N_{t}^T(f_1,f_2)(x), \hspace{0.7cm}\text{and}\hspace{0.7cm}\mathcal N_{lac}^T (f_1,f_2)(x):=\sup_{k\in\mathbb{Z}}\;\mathcal N_{2^k}^T(f_1,f_2)(x).\]
Observe that if $T=\{\vec{0}\},$ the sharp $L^p$-bounds for H\"older exponents 
are obtained for the maximal function $\mathcal N_{full}^{\vec 0}$ in 
\cite{MaximalEstimatesForTheBilinearSphericalAveragesAndTheBilinearBochnerRieszOperators, BCSS} by the method of slicing for dimensions $d\geq2$. We also refer to \cite{LeeShuin} for bilinear maximal functions defined on degenerate surfaces.

\sloppy For $T=\{\vec 0\}$, Borges and 
Foster~\cite{Borges} and Cho, Lee and Shuin~\cite{CLS} have established $L^{p_1}(\R^d)\times L^{p_2}(\R^d)\rightarrow L^{p_3}(\R^d)$-boundedness of $\mathcal{N}^{\vec 0}_{lac}$ for $d\geq2,$ except for border line cases.

We extend these results to the context of the lacunary maximal function 
$\mathcal N_{lac}^T$. To state the bounds we need the following notation. Given a set of point $\{X_1,X_2,\dots,X_k\}$ in $\R^2$, let $\Omega(\{X_1,X_2,\dots,X_k\})$ denote the open convex hull of all the points $X_1,X_2,\dots,X_k$. We define the points $O=(0,0)$, $A=(1,0)$, $B=(0,1)$, $P=P(d,s),\;Q=Q(d,s)$ and $R=R(d,s)$ as follows,
\[P(d,s)=\left(\frac{d-1}{d-1+s},\frac{d-1}{d-1+s}\right),\;d\geq2,\;0<s<d-1,\]
and for $d\geq3,\;0<s<d-2$,
\[Q(d,s)=\left(1,\frac{d-s-1}{d-s}\right), 
\text{ and }R(d,s)=\left(\frac{d-s-1}{d-s},1\right).\]
We have the following $L^p$-estimates for the operator $\mathcal N_{lac}^T$ in dimension $d\geq 2$. 
\begin{theorem}\label{bilinearNT}
	Let $d\geq2$, $0<s< d-1$ and $0<p_1,p_2,p_3\leq\infty$ with $\frac{1}{p_1}+\frac{1}{p_2}=\frac{1}{p_3}$. 
	Then the operator $\mathcal N_{lac}^T$ is bounded from $L^{p_1}(\mathbb{R}^{d})\times L^{p_2}(\mathbb{R}^{d})$ to $L^{p_3}(\mathbb{R}^{d})$ when 
	\begin{enumerate}
		\item $d\geq2$, $0<s< d-1$, and $(\frac{1}{p_1},\frac{1}{p_2})\in \Omega(\{O,A,P,B\})$.
		\item $d\geq3$, $0<s< d-2$, and $(\frac{1}{p_1},\frac{1}{p_2})\in \Omega(\{O,A,Q,P,R,B\})$.
	\end{enumerate}
\end{theorem}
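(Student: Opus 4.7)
The plan is to reduce the bilinear uncentered lacunary maximal operator to a weighted integral of products of linear lacunary uncentered spherical maximal operators via a slicing of $\mathbb{S}^{2d-1}$, and then combine the resulting factorized estimates with the linear Theorems~\ref{NT} and~\ref{Lp improving} by bilinear real interpolation. The polar decomposition of $\mathbb{S}^{2d-1}$ across the splitting $\R^{2d}=\R^d\oplus\R^d$ yields the pointwise identity
\[
\mathcal A^{u,v}_t(f_1,f_2)(x)=c_d\int_0^1 r^{d-1}(1-r^2)^{(d-2)/2}\,A^{u/r}_{tr}f_1(x)\,A^{v/\sqrt{1-r^2}}_{t\sqrt{1-r^2}}f_2(x)\,dr.
\]
Setting $t=2^k$, pushing $\sup_{u,v\in T,\,k\in\Z}$ inside the nonnegative integrand, and using that $\{2^k r\}_{k\in\Z}$ and $\{2^k\sqrt{1-r^2}\}_{k\in\Z}$ are dyadic lacunary sequences (so that the associated linear lacunary maximal norms are independent of $r$), we obtain
\[
\mathcal N^T_{lac}(f_1,f_2)(x)\le c_d\int_0^1 r^{d-1}(1-r^2)^{(d-2)/2}\,N^{T/r}_{lac}f_1(x)\,N^{T/\sqrt{1-r^2}}_{lac}f_2(x)\,dr,
\]
where $T/r$ and $T/\sqrt{1-r^2}$ inherit the $s$-dimensional upper Minkowski-content hypothesis of $T$ with constants scaling as $r^{-s}$ and $(1-r^2)^{-s/2}$ respectively.

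For part~(1), the vertex estimates proceed as follows. At the trivial vertices $O=(0,0)$, $A=(1,0)$, $B=(0,1)$, we use $|f_j|\le\|f_j\|_\infty$ and reduce to a linear bound; at $A$, for instance, the spherical integral of $|f_1|$ is dominated by the Hardy--Littlewood maximal composed with the lacunary translation maximal $\sup_{k,u\in T}|\cdot (x+2^k u)|$, which handles the endpoint $L^1\times L^\infty\to L^{1,\infty}$ estimate. At the diagonal vertex $P=\bigl(\tfrac{d-1}{d-1+s},\tfrac{d-1}{d-1+s}\bigr)$, we insert the bound from Theorem~\ref{NT} at $L^p$ just above the linear threshold into the slicing formula; the content-scaling $r^{-s}$, $(1-r^2)^{-s/2}$ is absorbed by the Jacobian weight $r^{d-1}(1-r^2)^{(d-2)/2}$, with the constraint $s<d-1$ inherited from the hypothesis of Theorem~\ref{NT}. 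Bilinear real interpolation between these (restricted weak-type) vertex estimates then produces strong bounds on the open convex hull $\Omega(\{O,A,P,B\})$.

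For part~(2), the new off-diagonal vertices $Q=(1,\tfrac{d-s-1}{d-s})$ and $R=(\tfrac{d-s-1}{d-s},1)$ require a different input: the lacunary maximal is not available on an $L^1$-factor, so in the slicing bound we replace one of the $N^{T/\cdot}_{lac}$ factors by the $L^p$-improving estimate for $A^T$ from Theorem~\ref{Lp improving}, choosing the improving exponent on the edge $[A,E]\subset\Delta$. Matching the Minkowski-content dependence of this improving bound against the Jacobian weight, and symmetrizing in $r\leftrightarrow\sqrt{1-r^2}$, delivers the vertex estimate at $Q$ (and at $R$ by symmetry). The constraint $s<d-2$ then arises naturally from the requirement that $Q,R$ lie in the first quadrant and that the $L^p$-improving estimate at the selected point of $\Delta$ is available with the needed exponents; bilinear interpolation again extends to the hexagonal hull $\Omega(\{O,A,Q,P,R,B\})$.

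The main obstacle is at the vertices $P$, $Q$, $R$: because $p_3<1$ there, Minkowski's integral inequality cannot be applied to pass from the pointwise slicing bound to an $L^{p_3}$-norm estimate. One must instead work with indicator inputs $f_j=\chi_{E_j}$ to produce restricted weak-type vertex estimates, carefully pairing the content-dependent linear bounds with the Jacobian weight so that the $r$-integral converges in the sharp range, and then invoke bilinear real interpolation to recover strong-type bounds in the open interior. A secondary, more quantitative technicality is to ensure that the dependence of the linear operator norms on the Minkowski-content constant of $T$ has the claimed form, which may require revisiting the quantitative forms of Theorems~\ref{NT} and~\ref{Lp improving}.
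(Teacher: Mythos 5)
Your slicing identity and the pointwise domination
\[
\mathcal N^T_{lac}(f_1,f_2)(x)\le c_d\int_0^1 r^{d-1}(1-r^2)^{\frac{d-2}{2}}\,N^{T/r}_{lac}f_1(x)\,N^{T/\sqrt{1-r^2}}_{lac}f_2(x)\,dr
\]
are correct, and the trivial vertices $A,B$ are handled as in the paper. However, your overall strategy diverges from the paper's and has two genuine gaps, one in each part.

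For the diagonal vertex $P$ in part (1), the paper does \emph{not} use slicing into products of linear lacunary operators; it runs a Christ--Zhou style Calder\'on--Zygmund decomposition on the bilinear operator itself, via the Littlewood--Paley pieces $R_{n_1+k}f_1,R_{n_2+k}f_2$, and balances the $L^2\times L^2\to L^1$ decay of Lemma~\ref{STn} against the $L^1\times L^1\to L^{\frac12,\infty}$ growth of Lemma~\ref{MTn}. The slicing route loses this bilinear cancellation and forces you to sum a continuous family of weak-type bounds in $L^{p_3,\infty}$ with $p_3<1$. You acknowledge Minkowski fails there, but the proposed fix (``work with indicators, pair content-dependent bounds with the Jacobian weight'') is not an argument: after dyadically decomposing the $r$-integral you are left with a sup over $r$ in each dyadic block of $N^{T/r}_{lac}\chi_{E_1}$, which is itself a new two-parameter maximal operator and is not controlled by Theorem~\ref{NT}; and even granting a per-block restricted weak-type bound, assembling the blocks requires an $\ell^{p_3/(p_3+1)}$-summability argument whose convergence hinges on the exact power of the Minkowski-content constant entering the endpoint bound in Theorem~\ref{NT}(iv) (the paper never makes this quantitative), so the claimed ``absorption by the Jacobian weight'' is unverified. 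In short, the diagonal vertex is precisely where slicing is expected to lose information, and the proposal does not close that gap.

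For part (2) the proposal is wrong as stated. You suggest replacing a lacunary maximal factor $N^{T/\cdot}_{lac}$ by ``the $L^p$-improving estimate for $A^T$ from Theorem~\ref{Lp improving}.'' But Theorem~\ref{Lp improving} concerns the single-scale average $A^T=A^T_1$; after slicing, each factor still carries the supremum over $k\in\Z$ (and over $u$ or $v\in T$, and over the slicing radius in a dyadic window), so a single-scale $L^p\to L^q$ bound cannot replace it. The paper instead factorizes dyadically in the slicing variable,
\[
\mathcal N_{lac}^T (f_1,f_2)\lesssim \sum_{l\ge1} 2^{-l(d-2)}\,M_l(f_1)\,N_{T,l}(f_2),
\]
with $M_l$ weak-type $(1,1)$ uniformly in $l$, and then proves a genuinely new maximal estimate for the continuous-parameter operator $N_{T,l}f=\sup_{k,v\in T,\,2^{-l}<r\le 2^{-l+1}}|f*\sigma_{2^kr}(\cdot+2^kv)|$ in Lemma~\ref{contlemma}, using shifted Hardy--Littlewood maximal functions, Littlewood--Paley theory, and $\epsilon$-losses in the $v$-covering. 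None of this is available from Theorem~\ref{Lp improving}, and the constraint $s<d-2$ arises from the summability in $l$ of the bound in Lemma~\ref{contlemma}, not from the geometry of $\Delta$. Thus part (2) requires a separate maximal estimate that your proposal does not supply.
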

\begin{figure}[H]
	\centering
	\begin{tikzpicture}[scale=3.8]
		\fill[gray] (0,0)--(1,0)--(0.75,0.75)--(0,1)--cycle;
		\fill[lightgray] (0,1)--(0.4,1)--(0.75,0.75)--(1,0.4)--(1,0)--(0.75,0.75)--(0,1);
		\draw[densely dotted]  (0,1) -- (1,0);
		\draw[thin][->]  (0,0)node[left]{$O$} --(1.15,0) node[right]{$1/p_1$};
		\draw[thin][->]  (0,0) --(0,1.2) node[left]{$1/p_2$};
		\draw[densely dotted] (0,1)--(1,1);
		\draw [densely dotted] (1,0) node[below]{$A$} --(1,1);
		\draw[densely dotted] (0,1)node[left]{$B$}--(0.75,0.75)node[right]{$P$}--(1,0);
		\draw[densely dotted] (0.4,1)node[above]{$R$}--(0.75,0.75)--(1,0.4)node[right]{$Q$};
		
	\end{tikzpicture}
	\caption{Region of boundedness of $\mathcal{N}_{lac}^T$ in Theorem \ref{bilinearNT}.}
	\label{Bilinear lacunary}
\end{figure}
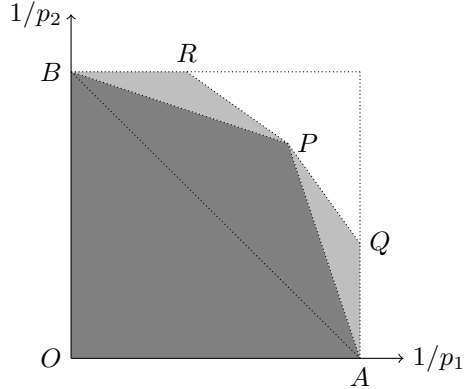
\begin{remark} Observe that in Theorem~\ref{bilinearNT}, the boundedness of $\mathcal{N}_{lac}^T$ for the 
one-dimensional case is not addressed. We would like to remark here that for the case $d=1$, the $L^p$-estimates for 
the operator $\mathcal{N}_{lac}^T$ 
hold in the Banach range. Moreover, these results extend to non-Banach range as well. 
However, the range in the non-Banach case is not quantifiable. Therefore, we present the one-dimensional 
results as a discussion in Section~\eqref{onedimensional}. 
\end{remark}
\section{Proof of Theorem \ref{bilinearNT}: Lacunary bilinear maximal operator $\mathcal N_{lac}^T$ in dimension $d\geq 2$}\label{Sec:prooflacbilinearuncentered}
We will employ a multiscale decomposition of the operator under consideration. 
Let $\phi\in \mathcal S(\R^d)$ be a function such that $\widehat\phi$ is supported 
in $B(0,2)$ and $\widehat\phi(\xi)=1$ for $\xi\in B(0,1)$. 
We define $\widehat{\phi_t}(\xi)=\widehat\phi(t\xi)$ and 
$\widehat{\psi}_t(\xi)= \widehat{\phi}(t\xi)-\widehat{\phi}(2t\xi)$. We choose 
the function $\phi$ such that the following identity holds	
\begin{eqnarray}\label{identity}
	\widehat \phi(\xi)+\sum_{j=1}^\infty\widehat \psi_{2^{-j}}(\xi)=1,\;\xi\neq 0.	
\end{eqnarray}
Also, we will need a counting argument that bounds the $L^p$-norm of the
supremum over T by that of certain linear operators with appropriate growth.
\begin{lemma}[Lemma 4.5, \cite{NikodymSetsAndMaximalFunctionsAssociatedWithSpheres}]\label{coveringlemma}
	Let $T$ be a compact subset of $\mathbb{R}^{d}$ and $\{T_j\}$ be the collection of centers of balls of 
	radius $2^{-j}$ covering $T$. Then, for $p\geq1,$ we have  
	$$\Vert \sup_{u\in T}|f\ast\psi_{2^{-j}}\ast\sigma(\cdot+u)|\Vert^{p}_{L^p(\R^d)}\lesssim \sharp T_j \Vert f\ast\psi_{2^{-j}}\ast\sigma\Vert^p_{L^{p}(\R^d)}.$$ 
\end{lemma}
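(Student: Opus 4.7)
The plan is to exploit the band-limitedness of $F:=f\ast\psi_{2^{-j}}\ast\sigma$, whose Fourier transform is supported in the annulus $\{|\xi|\sim 2^j\}$, in order to transfer the $2^{-j}$-scale supremum over each covering ball into a harmless convolution; a simple union bound then controls the total.

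By the covering property $T\subset\bigcup_{u_0\in T_j}B(u_0,2^{-j})$, one has the pointwise inequality
\[\sup_{u\in T}|F(x+u)|^p\leq\sum_{u_0\in T_j}\sup_{u\in B(u_0,2^{-j})}|F(x+u)|^p,\]
so it suffices to prove the uniform one-ball estimate
\[\int_{\R^d}\sup_{u\in B(u_0,2^{-j})}|F(x+u)|^p\,dx\lesssim \|F\|_{L^p(\R^d)}^p\qquad(u_0\in T_j),\]
and then sum over $T_j$.

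For the one-ball estimate, fix a Schwartz function $\Phi$ with $\widehat{\Phi}\equiv 1$ on the (fixed) annulus containing $\operatorname{supp}\widehat{\psi}$, and set $\Phi_{2^{-j}}(x):=2^{jd}\Phi(2^j x)$, so that $\widehat{\Phi_{2^{-j}}}\equiv 1$ on a neighborhood of $\operatorname{supp}\widehat{F}$. The reproducing identity $F=F\ast\Phi_{2^{-j}}$ then holds. Writing $u=u_0+h$ with $|h|\leq 2^{-j}$ and introducing the majorant $\widetilde{\Phi}(v):=\sup_{|h'|\leq 1}|\Phi(v+h')|$, a change of variable yields
\[\sup_{u\in B(u_0,2^{-j})}|F(x+u)|\leq (|F|\ast G)(x+u_0),\qquad G(w):=2^{jd}\widetilde{\Phi}(2^j w).\]
Since $\Phi$ is Schwartz, $\widetilde{\Phi}$ is rapidly decreasing, and the scaling yields $\|G\|_{L^1(\R^d)}=\|\widetilde{\Phi}\|_{L^1(\R^d)}\lesssim 1$ independently of $j$. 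Young's convolution inequality now gives
\[\int_{\R^d}\sup_{u\in B(u_0,2^{-j})}|F(x+u)|^p\,dx\leq \|G\|_{L^1}^p\|F\|_{L^p}^p\lesssim\|F\|_{L^p}^p,\]
and summing over $u_0\in T_j$ delivers the stated estimate.

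The main technical point is ensuring that the constant in the one-ball estimate is genuinely uniform in $j$. This rests on (i) the Schwartz decay of $\Phi$, which keeps $\|\widetilde{\Phi}\|_{L^1}$ finite, and (ii) the correct rescaling $\Phi\mapsto\Phi_{2^{-j}}$, which matches the $2^{-j}$-scale supremum with the natural concentration scale of the reproducing kernel so that $\|G\|_{L^1}$ is scale-invariant. Any alternative route via a Sobolev-type embedding would require $p$ large, whereas the reproducing kernel argument above handles all $p\geq 1$.
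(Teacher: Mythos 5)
Your proof is correct. The argument — union bound over the covering, reproducing formula $F=F\ast\Phi_{2^{-j}}$ for a band-limited $F$, pointwise domination of the $2^{-j}$-scale supremum by a convolution with a rapidly decaying kernel whose $L^1$-norm is scale-invariant, and Young's inequality — is exactly the standard argument behind Lemma~4.5 of Chang--Dosidis--Kim, which the present paper cites without proof. Each step is justified: the Fourier support of $f\ast\psi_{2^{-j}}\ast\sigma$ is contained in $\{|\xi|\sim 2^j\}$ because of the $\psi_{2^{-j}}$ factor alone (so $\sigma$ plays no role here); the majorant $\widetilde\Phi$ of the Schwartz function $\Phi$ over unit shifts remains rapidly decaying, so $\|G\|_{L^1}=\|\widetilde\Phi\|_{L^1}$ is finite and, crucially, independent of $j$; and the union bound over $T_j$ accounts for the factor $\#T_j$. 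The same principle is used implicitly in the paper's own Lemma~\ref{mainests}, where the slowly-varying nature of a kernel at scale $2^{-j}$ (there phrased as $\Psi_{2^{-j}}(w_1)\lesssim\Psi_{2^{-j}}(w_2)$ for $|w_1-w_2|\leq 2^{-j}$) plays the role of your reproducing-kernel step.
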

\textbf{Proof of boundedness in the region $\Omega(\{O,A,P,B\})$:} By interpolation, it suffices to prove weak-type estimates at the points $A,B$ and strong type bounds on the line segment $OP$ (excluding $P$). The weak-type bounds at the points $A$ and $B$ follow by the following pointwise inequality
\[\mathcal N^T_{lac}(f_1,f_2)(x)\lesssim\min\{\|f_1\|_\infty M_{HL}f_2(x),\|f_2\|_\infty M_{HL}f_1(x)\}.\]
Indeed, for $d\geq2,$ the estimate above follows using a slicing argument. We have,
\begin{eqnarray*}
	\mathcal{N}^{T}_{lac}(f_1,f_2)(x)
	&=&2\sup_{k\in\mathbb{Z}}\sup_{(u,v)\in T}\Big|\int_{B^{d}(0,1)}f_1(x+2^{k}(u+y))(1-|y|^2)^{\frac{d-2}{2}}\times\\
	&& \hspace{3cm}\int_{\mathbb{S}^{d-1}}f_2(x+2^{k}(v+\sqrt{1-|y|^2}z))d\sigma(z)dy\Big|\\
	&&\lesssim	M_{HL}f_1(x)\Vert f_2\Vert_{{\infty}}.
\end{eqnarray*}
The other estimate follows similarly.

Now, the proof for $\mathcal{N}^{T}_{lac}$ at the point $P=(\frac{d-1}{d-1+s},\frac{d-1}{d-1+s})$ can be 
completed by exploiting Christ and Zhou's argument from \cite{ChristZhou}.
Using the identity \eqref{identity}, we define new operators $P_k, R_k$ such that 
\[\widehat{P_kf}(\xi)=\hat{f}(\xi)\hat{\phi}(2^{-k}\xi), ~~\widehat{R_{k}f}=\hat{f}(\xi)\hat{\psi}(2^{-k}\xi),.\]
Then we have the following decomposition of the bilinear lacunary maximal function $\mathcal{N}^{T}_{lac}(f_1,f_2)$ 
\begin{eqnarray*}
	\mathcal{N}^{T}_{lac}(f_1,f_2)(x)&\leq& \sup_{k\in\mathbb{Z}}\sup_{u,v\in T}\Big|\mathcal{A}^{u,v}_{2^{-k}}(P_kf_1,P_kf_2)(x)\Big|+\sup_{k\in\mathbb{Z}}\sup_{u,v\in T}\Big|\mathcal{A}^{u,v}_{2^{-k}}(P_kf_1,(I-P_k)f_2)(x)\Big|\\
	&+&\sup_{k\in\mathbb{Z}}\sup_{u,v\in T}\Big|\mathcal{A}^{u,v}_{2^{-k}}((I-P_k)f_1,P_kf_2)(x)\Big|+\sum^{\infty}_{n_1,n_2=1}\sup_{k\in\mathbb{Z}}\sup_{u,v\in T}\Big|\mathcal{A}^{u,v}_{2^{-k}}(R_{n_1+k}f_1,R_{n_2+k}f_2)(x) \Big|\\
	&\lesssim& \sup_{k\in\mathbb{Z}}\sup_{u,v\in T}\Big|\mathcal{A}^{u,v}_{2^{-k}}(P_kf_1,P_kf_2)(x)\Big|+\sup_{k\in\mathbb{Z}}\sup_{u,v\in T}\Big|\mathcal{A}^{u,v}_{2^{-k}}(P_kf_1,f_2)(x)\Big|
	\\
	&+&\sup_{k\in\mathbb{Z}}\sup_{u,v\in T}\Big|\mathcal{A}^{u,v}_{2^{-k}}(f_1,P_kf_2)(x)\Big|+\sum^{\infty}_{n_1,n_2=1}\sup_{k\in\mathbb{Z}}\sup_{u,v\in T}\Big|\mathcal{A}^{u,v}_{2^{-k}}(R_{n_1+k}f_1,R_{n_2+k}f_2)(x) \Big|.
\end{eqnarray*}
For $\mathbf{n}=(n_1,n_2)\in \mathbb{N}^{2}$, we denote 
\begin{eqnarray*}
	&&\mathcal{M}^{T}_{\mathbf{n}}(f_1,f_2)(x):=\sup_{k\in\mathbb{Z}}\sup_{u,v\in T}\Big|\mathcal{A}^{u,v}_{2^{-k}}(R_{n_1+k}f_1,R_{n_2+k}f_2)(x)\Big|,\\
	&&\mathcal{S}^{T}_{\mathbf{n}}(f_1,f_2)(x):=\sum_{k\in\mathbb{Z}}\sup_{u,v\in T}\Big|\mathcal{A}^{u,v}_{2^{-k}}(R_{n_1+k}f_1,R_{n_2+k}f_2)(x)\Big|.
\end{eqnarray*}
Clearly, $\mathcal{M}^{T}_{\mathbf{n}}(f_1,f_2)\leq \mathcal{S}^{T}_{\mathbf{n}}(f_1,f_2)$. The following lemmas are crucial to prove desired $L^{p_1}\times L^{p_2}\to L^{p_3}$ boundedness of $\mathcal{N}^{T}_{lac}$.
\begin{lemma}\label{trivialestimates} Let $d\geq2$. Then 
	we have the following pointwise estimates
	\begin{align*}
		\sup_{k\in\mathbb{Z}}\sup_{u,v\in T}\Big|\mathcal{A}^{u,v}_{2^{-k}}(P_kf_1,f_2)(x)\Big|&\lesssim M_{HL}f_1(x)M_{HL}f_2(x),\\
		\sup_{k\in\mathbb{Z}}\sup_{u,v\in T}\Big|\mathcal{A}^{u,v}_{2^{-k}}(f_1,P_kf_2)(x)\Big|&\lesssim M_{HL}f_1(x)M_{HL}f_2(x),\\
		\sup_{k\in\mathbb{Z}}\sup_{u,v\in T}\Big|\mathcal{A}^{u,v}_{2^{-k}}(P_kf_1,P_kf_2)(x)\Big|&\lesssim M_{HL}f_{1}(x)M_{HL}f_2(x).
	\end{align*}
\end{lemma}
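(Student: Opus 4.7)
The plan is to exploit two standard facts: (1) the Littlewood–Paley piece $P_k f = f*\phi_{2^{-k}}$ is essentially constant at scale $2^{-k}$, so can be pointwise dominated by $M_{HL}f$ evaluated at any nearby point; and (2) the marginal of $\sigma_{2d-1}$ in the $z$-variable is absolutely continuous, which lets me reduce the remaining spherical integral to a Euclidean ball average at scale $2^{-k}$. The compactness of $T$ is used only to ensure $|u|,|v|\le C_T$, so that every point of the form $x+2^{-k}(u+y_0)$ with $(y_0,z_0)\in\mathbb S^{2d-1}$ lies in the ball $B(x,(C_T+1)2^{-k})$.

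The key pointwise ingredient is the inequality
\[|P_k f(y)|\;\lesssim_{C}\; M_{HL}f(x) \quad \text{whenever } |y-x|\le C\,2^{-k},\]
which I would prove by observing that the Schwartz decay $|\phi(w)|\lesssim(1+|w|)^{-N}$ combined with $|y-x|\lesssim 2^{-k}$ implies $\phi_{2^{-k}}(y-v)\lesssim\tilde\phi_{2^{-k}}(x-v)$ for a suitable radial decreasing $L^1$-normalized Schwartz bump $\tilde\phi$; convolving against $|f|$ and using the well-known fact that convolution with such a $\tilde\phi_{2^{-k}}$ is pointwise dominated by $M_{HL}f$ gives the claim.

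Granting this, for the first estimate I bound
\[|\mathcal A^{u,v}_{2^{-k}}(P_kf_1,f_2)(x)|\le \int_{\mathbb S^{2d-1}}|P_kf_1(x+2^{-k}(u+y_0))|\,|f_2(x+2^{-k}(v+z_0))|\,d\sigma(y_0,z_0),\]
pull the factor $M_{HL}f_1(x)$ out of the $P_kf_1$-term via the key ingredient, and then analyze the remaining integral of $|f_2|$ using the marginal identity
\[\int_{\mathbb S^{2d-1}}F(z_0)\,d\sigma_{2d-1}(y_0,z_0)=c\int_{B^d(0,1)}F(z_0)(1-|z_0|^2)^{(d-2)/2}\,dz_0.\]
A change of variables $\eta=2^{-k}(v+z_0)$ converts this into a ball average of $|f_2|$ over $B(x+2^{-k}v,2^{-k})\subset B(x,(C_T+1)2^{-k})$, which is $\lesssim M_{HL}f_2(x)$. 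The second inequality is identical by symmetry of the two factors. The third inequality is easier: the pointwise bound applies to both $P_kf_1$ and $P_kf_2$, giving an integrand dominated by $M_{HL}f_1(x)\,M_{HL}f_2(x)$, and the total mass of $\sigma_{2d-1}$ is a constant. The only real obstacle is getting the pointwise bound $|P_kf(y)|\lesssim M_{HL}f(x)$ uniformly in $k\in\Z$ — this is where compactness of $T$ enters, since $|u|,|v|\le C_T$ is required for the displacement $2^{-k}(u+y_0)$ to stay within $O(2^{-k})$ of $x$ at every scale.
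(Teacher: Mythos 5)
Your proposal is correct and follows essentially the same route as the paper's proof: bound the $P_k$ factor uniformly by $M_{HL}$ using the boundedness of the shift $u+y$ (compactness of $T$ plus $|y|\le 1$) together with the Schwartz decay of $\phi$, slice $\sigma_{2d-1}$ to a weighted ball average in the remaining variable, and absorb the bounded shift into a slightly larger ball to conclude with $M_{HL}$ again. The only difference is that you spell out the proof of the pointwise bound $|P_kf(y)|\lesssim M_{HL}f(x)$ for $|y-x|\lesssim 2^{-k}$, which the paper simply cites as already established.
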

\begin{proof}
	We have already shown that 
	\[\sup_{k\in\mathbb{Z}}\sup_{|y|\leq1}\sup_{u\in T}|P_{k}f_1(x-2^{-k}(u+y))|\lesssim M_{HL}f_1(x).\]
	Therefore, using a slicing argument, we have
	\begin{align*}
		\sup_{k\in\mathbb{Z}}\sup_{u,v\in T}\Big|\mathcal{A}^{u,v}_{2^{-k}}(P_kf_1,f_2)(x)\Big|&\leq \sup_{k\in\mathbb{Z}}\sup_{|y|\leq1}\sup_{u\in T}|P_{k}f_1(x-2^{-k}(u+y))|\sup_{k\in\mathbb{Z}}\sup_{v\in T}\int_{\mathbb{S}^{2d-1}}|f_2(x-2^{-k}(v+z))|d\sigma(y,z)\\
		&\lesssim M_{HL}f_1(x)\sup_{k\in\mathbb{Z}}\sup_{v\in T}\int_{B^{d}(0,1)}|f_2(x-2^{-k}(v+z))|(1-|z|^{2})^{(d-2)/2}\int_{\mathbb{S}^{d-1}}d\sigma(y)~dz\\
		&\leq M_{HL}f_1(x)\sup_{k\in\mathbb{Z}}\int_{B^{d}(0,a)}|f_2(x-2^{-k}z)|~dz\\
		&\lesssim M_{HL}f_1(x)M_{HL}f_2(x).
	\end{align*}
	Here, we have used the fact that the set $T$ is compact. Similarly, we can obtain the other two pointwise estimates.
\end{proof}

\begin{lemma}\label{STn}
	Let $0<s<d-1$. Then for any $\mathbf{n}\in\mathbb{N}^2$ we have 
	\[\Vert \mathcal{M}^{T}_{\mathbf{n}}(f_1,f_2)\Vert_{L^{1}}\lesssim 2^{-(n_1+n_2)(d-1-s)/2}\Vert f_1\Vert_{L^{2}}\Vert f_2\Vert_{L^{2}}.\]
\end{lemma}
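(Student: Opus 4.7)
The plan is to use a polar slicing of the sphere $\mathbb{S}^{2d-1}\subset\mathbb{R}^{2d}$ to factor the bilinear spherical average into an angular integral of products of two (shifted) linear spherical averages, and then estimate each factor via Lemma~\ref{coveringlemma} and the Fourier decay of $\widehat\sigma$. Writing $(y,z)=(\cos\theta\,\omega_1,\sin\theta\,\omega_2)$ with $\theta\in(0,\pi/2)$ and $\omega_i\in\mathbb{S}^{d-1}$, the Jacobian gives $d\sigma_{2d-1}(y,z)=c(\cos\theta\sin\theta)^{d-1}\,d\theta\,d\sigma(\omega_1)\,d\sigma(\omega_2)$, so that
\[\mathcal{A}^{u,v}_{2^{-k}}(g_1,g_2)(x)=c\int_0^{\pi/2}(\cos\theta\sin\theta)^{d-1}\,A_{2^{-k}\cos\theta}g_1(x+2^{-k}u)\,A_{2^{-k}\sin\theta}g_2(x+2^{-k}v)\,d\theta,\]
where $A_{r}g(x):=\int_{\mathbb{S}^{d-1}}g(x+r\omega)\,d\sigma(\omega)$. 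Since $\mathcal{M}^T_{\mathbf{n}}\le\mathcal{S}^T_{\mathbf{n}}=\sum_k(\cdot)$ pointwise, it suffices to prove the single-scale bound
\[\bigl\|\sup_{u,v\in T}|\mathcal{A}^{u,v}_{2^{-k}}(R_{n_1+k}f_1,R_{n_2+k}f_2)|\bigr\|_{L^1}\lesssim 2^{-(n_1+n_2)(d-1-s)/2}\|R_{n_1+k}f_1\|_{L^2}\|R_{n_2+k}f_2\|_{L^2},\]
since summing over $k\in\mathbb Z$, Cauchy--Schwarz in $k$, and Littlewood--Paley orthogonality ($\sum_k\|R_{n_i+k}f_i\|_{L^2}^2\lesssim\|f_i\|_{L^2}^2$) then deliver the lemma.

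To prove the single-scale bound, I would push $\sup_{u,v\in T}$ inside the nonnegative $\theta$-integrand and split the resulting $L^1$-norm by Cauchy--Schwarz in $x$ into a product of two $L^2$-norms. Each $L^2$-norm I would bound by combining (the naturally rescaled version of) Lemma~\ref{coveringlemma}, which loses the factor $(\sharp T_{n_i})^{1/2}\lesssim 2^{n_is/2}$, with the Fourier decay $|\widehat\sigma(r\xi)|\lesssim(1+r|\xi|)^{-(d-1)/2}$ on the LP-projection at frequency $2^{n_i+k}$. For the $u$-factor, this gives
\[\bigl\|\sup_{u\in T}|A_{2^{-k}\cos\theta}R_{n_1+k}f_1(\cdot+2^{-k}u)|\bigr\|_{L^2}\lesssim 2^{-n_1(d-1-s)/2}\cos^{-(d-1)/2}\theta\,\|R_{n_1+k}f_1\|_{L^2},\]
and symmetrically for the $v$-factor with $\sin\theta$ and $n_2$. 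The singular weights $\cos^{-(d-1)/2}\theta$ and $\sin^{-(d-1)/2}\theta$ combine with $(\cos\theta\sin\theta)^{d-1}$ to produce the integrable density $(\cos\theta\sin\theta)^{(d-1)/2}$ on $(0,\pi/2)$, yielding the desired single-scale estimate.

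The main technical point to handle carefully is the behavior near $\theta=0$ and $\theta=\pi/2$, where the argument $\cos\theta\cdot 2^{n_1}$ (respectively $\sin\theta\cdot 2^{n_2}$) of $\widehat\sigma$ drops below $1$ and the decay bound degenerates to $O(1)$. However, these angular regions have measure $\lesssim 2^{-n_1}$ and $\lesssim 2^{-n_2}$ respectively, so using the trivial bound $|\widehat\sigma|\le 1$ there together with the weight $(\cos\theta\sin\theta)^{d-1}$ gives contributions that are strictly of lower order than the bulk estimate and are therefore absorbed.
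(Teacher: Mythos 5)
Your argument is essentially identical to the paper's (which proceeds via Lemma~\ref{mainests}(i)): slice $\mathbb{S}^{2d-1}$ into products of $\mathbb{S}^{d-1}$, push $\sup_{u,v\in T}$ inside the angular integral, apply Cauchy--Schwarz in $x$, bound each factor by Lemma~\ref{coveringlemma} together with the Fourier decay of $\widehat\sigma$, and finally pass from the single-scale estimate through $\mathcal M^T_{\mathbf n}\leq\mathcal S^T_{\mathbf n}$ by summing in $k$ with Cauchy--Schwarz and Littlewood--Paley orthogonality. The aside about the endpoints $\theta\to0,\pi/2$ is superfluous, since $(1+x)^{-(d-1)/2}\le x^{-(d-1)/2}$ for all $x>0$ already yields the integrable density $(\cos\theta\sin\theta)^{(d-1)/2}$ directly (which is what the paper implicitly uses), and in that aside you have interchanged the roles of $n_1$ and $n_2$ relative to $\theta=0$ versus $\theta=\pi/2$, though this is harmless.
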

\begin{lemma}\label{MTn}
	Let $0<s\leq d$. Then for any $\mathbf{n}\in\mathbb{N}^2$ we have
	\[\Vert \mathcal{M}^{T}_{\mathbf{n}}(f_1,f_2)\Vert_{L^{\frac{1}{2},\infty}}\lesssim 2^{(n_1+n_2)s}(1+|\mathbf{n}|^2)\Vert f_1\Vert_{L^{1}}\Vert f_2\Vert_{L^{1}}.\]
\end{lemma}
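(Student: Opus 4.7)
My plan has three steps.

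First, I would discretize the supremum over $u,v\in T$ by covering $T$ with balls of radii $2^{-n_1}$ and $2^{-n_2}$, invoking the bilinear analogue of Lemma~\ref{coveringlemma}. Since $R_{n_i+k}f_i$ is frequency-localized at scale $2^{n_i+k}$, perturbing $u$ (resp.\ $v$) within a ball of radius $2^{-n_i}$ changes $f_i(x+2^{-k}(u+y))$ by an amount pointwise controlled by $M_{HL}f_i(x)$. By the $s$-dimensional Minkowski content hypothesis, each such net has cardinality $\lesssim 2^{n_is}$. This step reduces the problem to bounding, for fixed centers $(u_0,v_0)$,
\[
\sup_{k\in\mathbb Z}\bigl|\mathcal A^{u_0,v_0}_{2^{-k}}(R_{n_1+k}f_1,R_{n_2+k}f_2)(x)\bigr|
\]
from $L^1\times L^1$ into $L^{1/2,\infty}$ with norm $\lesssim(1+|\mathbf n|^2)$; the combinatorial factor $2^{(n_1+n_2)s}$ then accounts for the net counting.

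Second, I would apply slicing on $\mathbb S^{2d-1}$ to decompose
\[
\mathcal A^{u_0,v_0}_{2^{-k}}(F_1,F_2)(x) = \int_{B^d(0,1)} F_1(x+2^{-k}(u_0+y))(1-|y|^2)^{\frac{d-2}{2}}\,A^{v_0}_{2^{-k}\sqrt{1-|y|^2}}F_2(x)\,dy,
\]
with $F_i=R_{n_i+k}f_i$. Taking sup over $k$ and using the pointwise bound $|R_{n_1+k}f_1|\lesssim M_{HL}f_1$, the outer factor $\sup_k\int_{B^d}|F_1|$ is dominated by $M_{HL}f_1(x)$ (times an absolute constant). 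What remains is a linear lacunary uncentered spherical maximal operator acting on the Littlewood-Paley piece $R_{n_2+k}f_2$. I claim this operator maps $L^1$ into $L^{1,\infty}$ with norm $\lesssim(1+n_2)$. A symmetric argument exchanging the roles of $f_1$ and $f_2$ gives the companion $(1+n_1)$, so that the weak-Lorentz H\"older inequality $\|fg\|_{L^{1/2,\infty}}\lesssim\|f\|_{L^{1,\infty}}\|g\|_{L^{1,\infty}}$ yields $(1+n_1)(1+n_2)\lesssim(1+|\mathbf n|^2)$.

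Third, I would justify the linear $L^1\to L^{1,\infty}$ bound with polynomial growth. The Fourier decay $|\widehat{\sigma}(2^{-k}\xi)|\lesssim 2^{-n_i(d-1)/2}$ on the annulus $|\xi|\sim 2^{n_i+k}$ of $R_{n_i+k}f_i$, combined with the near-disjointness of these annuli as $k$ varies, yields a strong $L^2$ estimate with decay in $n_i$. A standard Calder\'on-Zygmund decomposition against this $L^2$ bound then produces the weak $(1,1)$ bound; the polynomial factor $(1+n_i)$ arises because contributions from distinct scales $k$ overlap with bounded multiplicity and one loses at most a logarithmic factor from the summation.

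The main obstacle is this last step: a naive bound $\sup_k|\cdot|\leq\sum_k|\cdot|$ diverges, and pointwise estimates that ignore the cancellation of $R_{n_i+k}$ against the spherical measure give exponential blow-up in $n_i$. Producing the correct $L^1\to L^{1,\infty}$ bound forces one to marry the LP cancellation with the decay of $\widehat{\sigma}$ through the Calder\'on-Zygmund decomposition, essentially reducing the lacunary supremum to a square-function-type estimate over frequency annuli. Once this linear estimate is in hand, combining with Step 1's counting factor and the weak Lorentz H\"older inequality closes the proof.
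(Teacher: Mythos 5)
Your plan has a structural gap in Step 1, and it is the crucial one. You propose to discretize the supremum over $T\times T$ into a net of roughly $N=2^{(n_1+n_2)s}$ pairs $(u_0,v_0)$ and then control the sum of the fixed-center operators, claiming the combinatorial loss is $N$. But the target space $L^{\frac12,\infty}$ is only $\tfrac12$-normable: $\|\sum_{j=1}^N g_j\|_{L^{1/2,\infty}}^{1/2}\lesssim\sum_j\|g_j\|_{L^{1/2,\infty}}^{1/2}$, so $N$ pieces each of size $C$ yield a bound $N^2C=2^{2(n_1+n_2)s}C$, not $NC$. The covering lemma you cite (Lemma~\ref{coveringlemma}) is stated only for $p\geq1$, precisely because the $(\sharp T_j)^{1/p}$ loss there relies on the triangle inequality. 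The doubled exponent $2^{2(n_1+n_2)s}$ would, when interpolated against Lemma~\ref{STn}, land the point $P$ at $\frac{1}{p_1}=\frac{d-1+s}{d-1+3s}$ rather than $\frac{d-1}{d-1+s}$, strictly shrinking the range in Theorem~\ref{bilinearNT}. This is not a technicality that can be absorbed into the $(1+|\mathbf n|^2)$ factor.

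The paper avoids this obstruction by never reducing to a fixed $(u_0,v_0)$. Instead it proves a strong-type $L^{1/2}$ bound for the \emph{single-scale} operator (Lemma~\ref{mainests}(ii)) that already carries the factor $2^{(n_1+n_2)s}$: it decomposes $\R^d$ into unit cubes $Q$, applies the covering/net argument at the $L^1$ level inside each cube (where the triangle inequality is available and the count enters with power $1$, not $2$), invokes the $L^1\times L^1\to L^1$ bound for a single bilinear spherical average from~\cite{IPS}, and only then takes the $\tfrac12$-power and Cauchy--Schwarz over $Q$. The lacunary maximal bound is then obtained from this by a genuinely bilinear Calder\'on--Zygmund decomposition (both $f_1$ and $f_2$ decomposed, with a joint exceptional set), a scale-by-scale estimate on the bad parts (Lemma~\ref{badfunction}) that supplies $\min\{1,2^{i_j-k},2^{(k+n_j-i_j)/2}\}$-type decay, and a summation via Lemma~5.2 of~\cite{ChristZhou} that yields the $(1+|\mathbf n|^2)$. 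Your Step~2/3 reduction to ``$M_{HL}f_1$ times a linear lacunary weak $(1,1)$ operator in $f_2$'' is a different and more linear route; even granting the claimed $L^1\to L^{1,\infty}$ bound with polynomial growth (which is delicate: after slicing, the inner operator involves a continuous range of radii and a moving center, not a lacunary supremum), the product via the weak-Lorentz H\"older inequality would be sound, but it cannot repair the net-counting loss that already occurs in Step~1. To salvage your scheme you would need to keep the covering count inside an $L^1$ (or at least Banach-space) stage of the argument, as the paper does, rather than pushing it out to the quasi-Banach $L^{1/2,\infty}$ level.
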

Now, for the moment if we assume that the above two lemmas hold true, then using real interpolation and summing in $\mathbf{n}$, we get the desired boundedness of $\mathcal{N}^{T}_{lac}$.

Before giving the proof of Lemmas \ref{STn} and \ref{MTn}, we will prove similar results for a fixed scale version of maximal averages $\mathcal{A}^{u,v}_1$.
\begin{lemma}\label{mainests}
	Let $d\geq2$ and $0<s<d-1$. Then, for $\mathbf{n}=(n_1,n_2)\in\N^2$, we have 
	\begin{enumerate}[label=(\roman*)]
		\item $\left\|\mathcal{A}^{u,v}_{1}(R_{n_1}f_1,R_{n_2}f_2)\right\|_1 \lesssim 2^{-(n_1+n_2)(\frac{d-1-s}{2})}\|R_{n_1}f_1\|_2\|R_{n_2}f_2\|_2,$\label{1inlemma}
		\item $\left\|\mathcal{A}^{u,v}_{1}(R_{n_1}f_1,R_{n_2}f_2)\right\|_{\frac{1}{2}}\lesssim 2^{(n_1+n_2)s}\|f_1\|_1\|f_2\|_1.$ \label{2inlemma}
	\end{enumerate}
\end{lemma}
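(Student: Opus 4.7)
The plan is to prove both estimates using the slicing formula for $\mathbb{S}^{2d-1}$ combined with Fourier analysis and a covering argument for $T$. The key identity, obtained by parametrizing $(y,z)\in\mathbb{S}^{2d-1}$ as $(y,\sqrt{1-|y|^2}\,\omega)$ with $y\in B^d(0,1)$ and $\omega\in\mathbb{S}^{d-1}$, reads
\[
\mathcal{A}^{u,v}_1(f_1,f_2)(x) = \int_{B^d(0,1)} (1-|y|^2)^{(d-2)/2}\, f_1(x+u+y)\,(\sigma_{\sqrt{1-|y|^2}}\ast f_2)(x+v)\,dy,
\]
where $\sigma_t$ denotes the surface measure on $t\mathbb{S}^{d-1}\subset\mathbb{R}^d$. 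This reduces the bilinear average on $\mathbb{S}^{2d-1}$ to a weighted integral of ordinary (linear) spherical averages on $\mathbb{R}^d$; everything else is built on top of it.

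For part (i), I first establish an $L^2$-bound at fixed $(u,v)$. Applying Cauchy-Schwarz in the $y$-integral to separate the two factors, taking $L^2$-norms in $x$, and invoking Plancherel together with the stationary-phase decay $|\widehat{\sigma}_{d-1}(t\eta)|\lesssim (t|\eta|)^{-(d-1)/2}$ on the annulus $|\eta|\sim 2^{n_2}$ yields a factor $2^{-n_2(d-1)/2}$; the symmetric slicing in $z$ produces the companion $2^{-n_1(d-1)/2}$, and a joint Cauchy–Schwarz combines them into the full decay $2^{-(n_1+n_2)(d-1)/2}$ for each fixed pair $(u,v)$. To pass to the supremum over $T$ and to $L^1$ in $x$, I use that $R_{n_j}f_j$ has Fourier support at scale $2^{n_j}$, so its kernel is essentially constant on balls of radius $2^{-n_j}$: I replace $T$ by a $2^{-n_j}$-net $T_{n_j}$ of cardinality $\sharp T_{n_j}\lesssim 2^{n_j s}$, localise $x$ to unit cubes where $\mathcal{A}^{u,v}_1(f_1,f_2)$ has essentially bounded support, and apply the square-function estimate $\sup_{u,v}|\mathcal{A}^{u,v}_1|\le(\sum_{u,v}|\mathcal{A}^{u,v}_1|^2)^{1/2}$ followed by Cauchy–Schwarz. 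This converts the supremum into the covering factor $(\sharp T_{n_1}\sharp T_{n_2})^{1/2}\sim 2^{(n_1+n_2)s/2}$, producing exactly the claimed decay $2^{-(n_1+n_2)(d-1-s)/2}$.

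For part (ii), the strategy is to establish a pointwise factorisation
\[
|\mathcal{A}^{u,v}_1(R_{n_1}f_1,R_{n_2}f_2)(x)|\lesssim G_1(x+u)\,G_2(x+v)
\]
with $\|G_i\|_1\lesssim\|f_i\|_1$, from which the elementary inequality $\|gh\|_{1/2}\le\|g\|_1\|h\|_1$ delivers the $L^{1/2}$-bound at fixed $(u,v)$. Starting again from the slicing identity, one factor is controlled using the trivial pointwise bound $\|R_{n_i}f_i\|_\infty\lesssim 2^{n_i d}\|f_i\|_1$ and the other via the $L^1$-boundedness of $\sigma_t\ast$ together with the control of the $y$-averaged spherical average by the Hardy–Littlewood maximal operator. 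The supremum over $T$ is then absorbed by the same covering argument combined with the quasi-triangle inequality on $L^{1/2,\infty}$, yielding the claimed growth $2^{(n_1+n_2)s}$; the logarithmic $(1+|\mathbf{n}|^2)$-correction that appears in the downstream Lemma \ref{MTn} is precisely the cost of upgrading from weak-type to strong-type bounds via summation over dyadic shells.

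The main obstacle is in part (i): extracting the full decay $2^{-(n_1+n_2)(d-1)/2}$ for a single $(u,v)$—rather than the cruder $2^{-\max(n_1,n_2)(d-1)/2}$ from a one-sided slicing—requires handling both slicings symmetrically, and the passage from this $L^2$-estimate to an $L^1$-bound on the supremum must use the square-function step, so that the covering contributes only $(\sharp T_{n_1}\sharp T_{n_2})^{1/2}$ and not the naive $\sharp T_{n_1}\sharp T_{n_2}$. The remaining ingredients—Plancherel, Cauchy–Schwarz, Young's convolution inequality, and Lemma \ref{coveringlemma}—are standard.
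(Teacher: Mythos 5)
There are genuine gaps in both parts, and the proposal diverges from the actual argument in ways that matter.

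For part (i), you correctly identify the main obstacle but the mechanism you propose does not resolve it. From the half-slicing $\mathcal{A}^{u,v}_1(g_1,g_2)(x)=\int_{B^d}(1-|y|^2)^{(d-2)/2}g_1(x+u+y)(\sigma_{\sqrt{1-|y|^2}}*g_2)(x+v)\,dy$ one indeed gets $\|\mathcal{A}^{u,v}_1(R_{n_1}f_1,R_{n_2}f_2)\|_1\lesssim 2^{-n_2(d-1)/2}\|R_{n_1}f_1\|_2\|R_{n_2}f_2\|_2$, and the symmetric slicing gives the $n_1$-companion. But a ``joint Cauchy--Schwarz'' between two upper bounds $\|X\|\le A$ and $\|X\|\le B$ yields only $\|X\|\le\min(A,B)$, not $\sqrt{AB}$; so this route gives $2^{-\max(n_1,n_2)(d-1)/2}$, which is strictly weaker than $2^{-(n_1+n_2)(d-1)/2}$ whenever $\min(n_1,n_2)>0$ and is not enough to close the lemma for small $s$. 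The paper instead passes to polar coordinates in \emph{both} blocks, writing $\mathcal{A}^{u,v}_1(g_1,g_2)(x)=c\int_0^1 r^{d-1}(1-r^2)^{(d-2)/2}(\sigma_r*g_1)(x+u)(\sigma_{\sqrt{1-r^2}}*g_2)(x+v)\,dr$, so that after Cauchy--Schwarz in $x$ \emph{both} factors are $L^2$-norms of $\psi$-localized functions convolved with a sphere measure, and Plancherel then yields the genuine product decay $(r2^{n_1})^{-(d-1)/2}(\sqrt{1-r^2}\,2^{n_2})^{-(d-1)/2}$ with a convergent $r$-integral. The $\sup_{u,v}$ is absorbed at that same step by applying Lemma~\ref{coveringlemma} to each $L^2$ factor, inside the $r$-integral, giving $2^{(n_1+n_2)s/2}$; there is no need for (and no clean way through) an intermediate $L^2\times L^2\to L^2$ bound at fixed $(u,v)$ followed by a square-function and cube-localization step, which is the second place your sketch is underdeterminate.

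For part (ii), the claimed pointwise tensor bound $|\mathcal{A}^{u,v}_1(R_{n_1}f_1,R_{n_2}f_2)(x)|\lesssim G_1(x+u)G_2(x+v)$ with $\|G_i\|_1\lesssim\|f_i\|_1$ does not hold and, if it did, would in fact be stronger than the lemma claims (no $2^{(n_1+n_2)s}$ loss even at fixed $(u,v)$). The slicing couples $f_1$ and $f_2$ through the common radial variable, so the average cannot be factored through separate $L^1$ functions of $x+u$ and $x+v$; moreover the ingredients you propose do not produce usable $G_i$: $\|R_{n_i}f_i\|_\infty\lesssim 2^{n_id}\|f_i\|_1$ would contribute a factor $2^{n_id}$, far larger than the target $2^{n_is}$ (recall $s<d-1<d$), and the Hardy--Littlewood control you invoke for the other factor is not $L^1$-bounded, so $\|G_i\|_1\lesssim\|f_i\|_1$ cannot be reached that way. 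The paper's proof of (ii) has no such factorization; instead it tiles $\R^d$ into unit cubes, localizes $f_1*\psi_{2^{-n_1}}$ to $Q$ and $f_2*\psi_{2^{-n_2}}$ to a fixed dilate of $Q$, uses the subadditivity $\|\sum_Q a_Q\|_{1/2}^{1/2}\le\sum_Q\|a_Q\|_{1/2}^{1/2}$, replaces $\sup_{u,v\in T}$ by a finite sum over $2^{-n_j}$-nets $C_1\times C_2$ using the essential constancy of $\Psi_{2^{-n_j}}$ at scale $2^{-n_j}$, and for each $(u,v)$ term invokes the $L^1\times L^1\to L^1$ bound for the single bilinear spherical average from \cite{IPS}; the discrete Cauchy--Schwarz over $C_1\times C_2$ then contributes $(\sharp C_1\sharp C_2)^{1/2}\lesssim 2^{(n_1+n_2)s/2}$, i.e.\ $2^{(n_1+n_2)s}$ after undoing the $1/2$-power. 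Finally, your remark that Lemma~\ref{mainests}(ii) should only be weak type and that the $(1+|\mathbf{n}|^2)$ factor in Lemma~\ref{MTn} measures the cost of upgrading to strong type is off: Lemma~\ref{mainests}(ii) is a strong $L^{1/2}$ bound with \emph{no} $|\mathbf{n}|$-loss, and the $(1+|\mathbf{n}|^2)$ factor in Lemma~\ref{MTn} comes from the Calder\'on--Zygmund decomposition and the Christ--Zhou summation lemma, not from a weak-to-strong upgrade.
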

\begin{proof}
	First, we use the slicing argument to get that
	\begin{align*}
		\mathcal{A}^{u,v}_{1}(R_{n_1}f_1,R_{n_2}f_2)(x)
		&=\sup_{(u,v)\in T}\bigg|\int_0^1r^{d-1}(1-r^2)^{\frac{d-2}{2}}\int_{\mathbb{S}^{d-1}} f_1*\psi_{2^{-n_1}}(x+u+ry)\;d\sigma(y)\\
		&\hspace{20mm}\int_{\mathbb{S}^{d-1}}f_2*\psi_{2^{-n_2}}(x+v+\sqrt{1-r^2}z)\;d\sigma(z)\;dr\bigg|\\
		&\leq \int_0^1r^{d-1}(1-r^2)^{\frac{d-2}{2}}\sup_{u\in T}\left|\int_{\mathbb{S}^{d-1}} f_1*\psi_{2^{-n_1}}(x+u+ry)\;d\sigma(y)\right|\\
		&\hspace{20mm}\sup_{v\in T}\left|\int_{\mathbb{S}^{d-1}}f_2*\psi_{2^{-n_2}}(x+v+\sqrt{1-r^2}z)\;d\sigma(z)\right|\;dr.\\
	\end{align*}
	We employ Cauchy-Schwartz inequality and Lemma \ref{coveringlemma} to obtain,
	\begin{align*}
		\|\mathcal{A}^{u,v}_{1}(R_{n_1}f_1,R_{n_2}f_2)\|_1&\lesssim \int_0^1r^{d-1}(1-r^2)^{\frac{d-2}{2}} 2^{(n_1+n_2)\frac{s}{2}}\|f_1*\psi_{2^{-n_1}}*\sigma_r\|_2\|f_2*\psi_{2^{-n_2}}*\sigma_{\sqrt{1-r^2}}\|_2 \;dr\\
		&\lesssim 2^{-(n_1+n_2)\frac{(d-1-s)}{2}}\|f_1*\psi_{2^{-n_1}}\|_2\|f_2*\psi_{2^{-n_2}}\|_2\int_0^1(1-r^2)^{-\frac{1}{2}}\;dr\\
		&\lesssim 2^{-(n_1+n_2)\frac{(d-1-s)}{2}}\|f_1*\psi_{2^{-n_1}}\|_2\|f_2*\psi_{2^{-n_2}}\|_2.
	\end{align*}
	
	To prove estimate \ref{2inlemma}, we rely on a covering argument for the compact set $T$ modified 
	for spatially localized Littlewood-Paley pieces of the functions $f_1$ and $f_2$. Let $\Psi_{2^{-j}}(x)=2^{jd}(2+2^j|x|)^{-(d+1)}$ so that $|\psi_{2^{-j}}(x)|\lesssim\Psi_{2^{-j}}(x)$. Also let $\R^d=\cup_{Q\in\mathfrak C}Q$, where $Q\in\mathfrak{C}$ are unit cubes with sides parallel to axes and $aQ$ is the concentric cube containing $Q$ with side length $a$, where the absolute parameter $a>2$ is such that the compact set $T\subset B(0,\frac{a}{2}-1)$. Then, we have
	\begin{align*}
		&\|\mathcal{A}^{u,v}_{1}(R_{n_1}f_1,R_{n_2}f_2)\|_\frac{1}{2}^\frac{1}{2}\\
		&=\int\left(\sup\limits_{u,v}\Big|\sum\limits_{Q\in\mathfrak{C}}((f_1*\psi_{2^{-n_1}})\chi_{_Q}\otimes( f_2*\psi_{2^{-n_2}}))*\sigma(x+u,x+v)\Big|\right)^\frac{1}{2}dx\\
		&\leq\sum_{Q\in\mathfrak{C}}\int_{aQ}\left(\sup\limits_{u,v}((|f_1|*\Psi_{2^{-n_1}})\chi_{_Q}\otimes( |f_2|*\Psi_{2^{-n_2}})\chi_{_{3aQ}})*\sigma(x+u,x+v)\right)^\frac{1}{2}dx\\
		&\lesssim\sum_{Q\in\mathfrak{C}}\left(\int_{aQ}\sup\limits_{u,v}((|f_1|*\Psi_{2^{-n_1}})\chi_{_Q}\otimes( |f_2|*\Psi_{2^{-n_2}})\chi_{_{3aQ}})*\sigma(x+u,x+v)\;dx\right)^\frac{1}{2}\\
		&\leq\sum_{Q\in\mathfrak{C}}\left(\int_{aQ}\int_{\s^{2d-1}}\sup\limits_{u}\;\left((|f_1|*\Psi_{2^{-n_1}})\chi_{_Q}\right)(x+u-y)\;\sup\limits_{v}\;\left(( |f_2|*\Psi_{2^{-n_2}})\chi_{_{3aQ}}\right)(x+v-z)d\sigma(y,z)\;dx\right)^\frac{1}{2}\\
	\end{align*}
	Let $C_1$ be the set of centers of balls of radius $2^{-n_1}$ covering $T$ and $C_2$ be the set of centers of 
	balls of radius $2^{-n_2}$ covering $T$. We note that $\Psi_{2^{-n_1}}(w_1)\lesssim\Psi_{2^{-n_1}}(w_2)$ for 
	$|w_1-w_2|\leq2^{-n_1}$. Then, we have
	\begin{align*}
		&\|\mathcal{A}^{u,v}_{1}(R_{n_1}f_1,R_{n_2}f_2)\|_\frac{1}{2}^\frac{1}{2}\\
		&\lesssim\sum_{Q\in\mathfrak{C}}\left(\int_{aQ}\int_{\s^{2d-1}}\sum\limits_{u\in C_1}\;\left((|f_1|*\Psi_{2^{-n_1}})\chi_{_{2Q}}\right)(x+u-y)\sum\limits_{v\in C_2}\;\left(( |f_2|*\Psi_{2^{-n_2}})\chi_{_{4aQ}}\right)(x+v-z)d\sigma(y,z)\;dx\right)^\frac{1}{2}\\
		&\lesssim\sum_{Q\in\mathfrak{C}}\left(\sum\limits_{u\in C_1,v\in C_2}\;\|\tau_{-u}\left((|f_1|*\Psi_{2^{-n_1}})\chi_{_{2Q}}\right)\|_1\;\|\tau_{-v}\left(( |f_2|*\Psi_{2^{-n_2}})\chi_{_{4aQ}}\right)\|_1\right)^\frac{1}{2}\\
		&\leq2^{(n_1+n_2)\frac{s}{2}}\left(\sum_{Q\in\mathfrak{C}}\|(|f_1|*\Psi_{2^{-n_1}})\chi_{_{2Q}}\|_1\right)^\frac{1}{2}\left(\sum_{Q\in\mathfrak{C}}\|(|f_2|*\Psi_{2^{-n_2}})\chi_{_{4aQ}}\|_1\right)^\frac{1}{2}\\
		&\lesssim2^{(n_1+n_2)\frac{s}{2}}\|f_1\|_1^\frac{1}{2}\|f_2\|_1^\frac{1}{2},
	\end{align*}
	where we have used the $L^1\times L^1\to L^1$-boundedness of single bilinear spherical average obtained 
	in \cite{IPS} in the second step and a Cauchy-Schwartz inequality in the third step.
	It can also be proved that \[\|\mathcal{A}^{u,v}_{1}(R_{n_1}f_1,R_{n_2}f_2)\|_{\frac{1}{2}}\lesssim2^{(i+j)s}\|f_1\ast\tilde{\psi}_{2^{-n_1}}\|_1\|f_2\ast\tilde{\psi}_{2^{-n_2}}\|_1.\]
	The above inequality follows from the fact that $\tilde{\psi}_{2^{-n_2}}\ast\psi_{2^{-n_2}}=\psi_{2^{-n_2}}$, 
	where $supp(\widehat{\psi_{2^{-n_2}}})\subset supp(\widehat{\tilde{\psi}_{2^{-n_2}}})$ and 
	$\widehat{\tilde{\psi}_{2^{-n_2}}}(\xi)=1$ for $\xi\in supp(\widehat{\psi_{2^{-n_2}}})$.
\end{proof}

\begin{proof}[Proof of Lemma \ref{STn}]
	Note that from Lemma \ref{mainests} we have 
	\[\Vert\sup_{u,v\in T}\Big|\mathcal{A}^{u,v}_1(R_{n_1}f_1,R_{n_2}f_2)\Big|\Vert_{L^{1}}\lesssim 2^{-(n_1+n_2)(d-1-s)/2}\Vert f_1\Vert_{L^{2}}\Vert f_2\Vert_{L^{2}}.\]
	We write 
	\begin{eqnarray*}
		\mathcal{A}^{u,v}_{2^{-k}}(R_{n_1+k}f_1,R_{n_2+k}f_2)(x)=\mathcal{A}^{u,v}_1(R_{n_1}\delta_{2^{-k}}f_1,R_{n_2}\delta_{2^{-k}}f_2)(2^{k}x),
	\end{eqnarray*}
	where $\delta_{2^{-k}}f(x)=f(2^{-k}x)$.
	Also we have the following identity 
	$\tilde{R}_{n+k}\circ R_{n+k}=R_{n+k}$, where $\widehat{\tilde{R}_{n+k}f}(\xi)=\hat{f}(\xi)\hat{\tilde{\psi}}_{2^{-(n+k)}}(\xi)$.
	Therefore, using Lemma \ref{mainests} we get 
	\begin{eqnarray*}
		\Vert \mathcal{M}^{T}_{\mathbf{n}}(f_1,f_2)\Vert_{L^{1}}&\leq& \sum_{k\in\mathbb{Z}}\int \sup_{u,v\in T}\Big|\mathcal{A}^{u,v}_{2^{-k}}(R_{n_1+k}\circ \tilde{R}_{n_1+k}f_1,R_{n_2+k}\circ \tilde{R}_{n_2+k}f_2)(x)\Big| dx\\
		&\lesssim& 2^{-(n_1+n_2)(d-1-s)/2}\sum_{k\in\mathbb{Z}} \Vert \tilde{R}_{n_1+k}f_1\Vert_{L^{2}}\Vert \tilde{R}_{n_2+k}f_2\Vert_{L^{2}}\\
		&\leq& 2^{-(n_1+n_2)(d-1-s)/2} \Vert f_1\Vert_{L^{2}} \Vert f_2\Vert_{L^{2}}.
	\end{eqnarray*}
	The last inequality follows from Cauchy-Schwartz inequality and Plancherel's identity.
\end{proof}
\begin{proof}[Proof of Lemma \ref{MTn}]
	We have 
	\begin{eqnarray*}
		\mathcal{M}^{T}_{\mathbf{n}}(f_1,f_2)(x)&=&\sup_{k\in \mathbb{Z}}\Big|\mathcal{A}^{T}_{2^{-k}}(R_{n_1+k}f_1,R_{n_2+k}f_2)(x)\Big|\\
		&=&\sup_{k\in \mathbb{Z}}\Big|\mathcal{A}^{T}_{2^{-k}}(R_{n_1+k}\circ\tilde{R}_{n_1+k}f_1,R_{n_2+k}\circ\tilde{R}_{n_2+k}f_2)(x)\Big|.
	\end{eqnarray*}
	Our claim is the following 
	\[\Big|\{x:\mathcal{M}^{T}_{\mathbf{n}}(f_1,f_2)(x)>\alpha\}\Big|\leq C|\mathbf{n}|^2\alpha^{-1/2}.\]
	We shall use Calder\'{o}n-Zygmund decomposition for both the functions $f,g$. Let $c_0>0$ be a constant which will be chosen later. Also let $\alpha>0$ and $\Vert f_j\Vert_{L^{1}}=1$. We decompose the functions as $f_j=g_j+h_j$, for $j=1,2$ with 
	\begin{eqnarray*}
		&&\Vert g_j\Vert_{L^{\infty}}\leq c_0\alpha, ~~h_j=\sum_{\beta_j}h_{\beta_j}, ~\text{where}~supp(h_{\beta_j})\subset Q_{\beta_j} ~~\text{and they are disjoint dyadic cubes,}~\text{and}~\\
		&&\Vert h_{\beta_j}\Vert_{L^{1}}\leq C\alpha|Q_{\beta_j}| ~~ \text{with}~~ \int_{Q_{\beta_j}}h_{\beta_j}=0. 
	\end{eqnarray*}
	In the above $\beta_1,\beta_2$ are two indexing sets.
	Define 
	\[\mathcal{E}:=\Big(\cup_{\beta_1}\tilde{Q}_{\beta_1}\Big)\cup \Big(\cup_{\beta_2}\tilde{Q}_{\beta_2}\Big),\]
	where $\tilde{Q}$ is a concentric cube with side length $l(\tilde{Q})=10l(Q)$. From the Calder\'{o}n--Zygmund decomposition it follows that $|\mathcal{E}|\lesssim \alpha^{-1/2}$.\\
	\textbf{Contribution from $\mathcal{M}^{T}_{\mathbf{n}}(h_1,h_2)$:}
	It is enough to show that 
	\[\int_{\mathbb{R}^d\setminus \mathcal{E}}|\mathcal{M}^{T}_{\mathbf{n}}(h_1,h_2)(x)|^{1/2}\lesssim |\mathbf{n}|^{2}.\]
	We further decompose each $h_j$ as follows 
	\[h_{j}=\sum_{i\in\mathbb{Z}}h^{i}_{j},~~\text{where}~~h^{i}_{j}=\sum_{l(Q_{\beta_j})=2^{-i}}h_{\beta_j}.\]
	Then we have the following estimate.
	\begin{lemma}\label{badfunction}
		Let $k,i_1,i_2\in\mathbb{Z}$ and $n_1,n_2\in\mathbb{N}$. Then the following estimate holds uniformly in $k,i_j,n_j$
		\begin{eqnarray*}
			\int_{\mathbb{R}^{d}\setminus \mathcal{E}}\Big|\mathcal{A}^{T}_{2^{-k}}(R_{n_1+k}h^{i_1}_1,R_{n_2+k}h^{i_2}_2)\Big|^{1/2}\lesssim 2^{(n_1+n_2)s/2}\min_{j=1,2}\min\{1,2^{i_j-k},2^{(k+n_j-i_j)/2}\}\Vert h^{i_1}_{1}\Vert^{1/2}_{L^{1}}\Vert h^{i_2}_{2}\Vert^{1/2}_{L^{1}}.
		\end{eqnarray*} 
	\end{lemma}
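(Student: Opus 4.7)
The plan is to establish the minimum of three separate bounds, corresponding to the three quantities $1$, $2^{i_j-k}$ and $2^{(k+n_j-i_j)/2}$ in the statement, by three different arguments, each favourable in a different regime of the parameters $(i_j,n_j,k)$. As a preliminary simplification, I would use the rescaling identity $\mathcal{A}^{u,v}_{2^{-k}}(R_{n_1+k}f_1,R_{n_2+k}f_2)(x)=\mathcal{A}^{u,v}_1(R_{n_1}\delta_{2^{-k}}f_1,R_{n_2}\delta_{2^{-k}}f_2)(2^kx)$ already exploited in the proof of Lemma~\ref{STn}, reducing matters to the unit-scale sphere; the bad functions then live on cubes of effective side $2^{-(i_j-k)}$ and the exceptional set is dilated accordingly. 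This isolates the three differences $\ell_j:=i_j-k$ as the only relevant scale parameters beyond the Littlewood--Paley indices $n_j$.

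The ``trivial'' factor $1$ is immediate from Lemma~\ref{mainests}(ii), since $\|R_{n_j}h\|_{L^1}\lesssim\|h\|_{L^1}$. The gain $2^{(k+n_j-i_j)/2}$, favourable when the cubes are much smaller than the smoothing scale $2^{-(n_j+k)}$, will be extracted by exploiting the mean-zero property of the bad functions against the smooth kernel. The convolution kernel of $\psi_{2^{-(n_j+k)}}*\sigma_{2^{-k}}(\cdot+2^{-k}u)$ has Fourier support in $|\xi|\sim 2^{n_j+k}$ and therefore varies at spatial scale $2^{-(n_j+k)}$. Writing
\[
R_{n_j+k}h_j^{i_j}*\sigma_{2^{-k}}(\cdot+2^{-k}u)=\sum_\beta\int h_\beta(z)\,K(\cdot,z)\,dz
\]
and Taylor-expanding $K$ around the center of $Q_\beta$ (of side $2^{-i_j}$) while using $\int h_\beta\,dz=0$ would yield a pointwise gain of $2^{-i_j}\cdot 2^{n_j+k}=2^{n_j+k-i_j}$. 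Feeding this refinement into the covering argument from the proof of Lemma~\ref{mainests}(ii), together with the square root that converts the resulting $L^{1/2}$-bound into the claimed $(L^{1/2})^{1/2}$-estimate, produces the desired factor.

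Finally, the gain $2^{i_j-k}$, favourable in the regime $i_j<k$ (cubes larger than the sphere), is obtained by a spatial-support restriction on $\mathbb{R}^d\setminus\mathcal{E}$. Since $\mathcal{E}$ excludes the enlarged cubes $\tilde{Q}_\beta$, while the convolution $h_\beta*\sigma_{2^{-k}}(\cdot+2^{-k}u)$ is concentrated in an $O(2^{-k})$-neighbourhood of $\partial Q_\beta$, its contribution to the integral is supported on a thin boundary shell whose $d$-dimensional measure is much smaller than $|Q_\beta|$. Combining this measure estimate with the $L^1\times L^1\to L^1$ boundedness of the single bilinear spherical average from~\cite{IPS} and a H\"older step for the $L^{1/2}$ quasi-norm on a small set would then produce the claimed factor. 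I expect the hardest step to be precisely this last one: the Schwartz tails of the Littlewood--Paley kernels $\psi_{2^{-(n_j+k)}}$ leak mass outside the cubes $Q_\beta$, and this leakage must be controlled uniformly in the $\lesssim 2^{n_j s}$-fold covering of $T$ furnished by Lemma~\ref{coveringlemma}, so as not to destroy the geometric gain $2^{i_j-k}$ obtained from the boundary-shell estimate.
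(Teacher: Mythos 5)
Your treatment of the constant $1$ and the factor $2^{(k+n_j-i_j)/2}$ matches the paper's: the former is Lemma~\ref{mainests}\ref{2inlemma} combined with rescaling, and the latter is the mean-zero/Taylor-expansion estimate $\Vert\tilde R_{n_j+k}h_j^{i_j}\Vert_{L^1}\lesssim 2^{n_j+k-i_j}\Vert h_j^{i_j}\Vert_{L^1}$ fed into that same $L^{1/2}$-bound, with the square root coming from $\int|g|^{1/2}=\|g\|_{1/2}^{1/2}$. So two of the three ingredients are correct and follow the paper's route.

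Your third ingredient, however, rests on a picture that does not match the geometry of the problem. In the regime $i_j<k$ the cube $Q_\beta$ (side $2^{-i_j}$) is \emph{larger} than the sphere radius $2^{-k}$, so $h_\beta*\sigma_{2^{-k}}$ is supported in $Q_\beta+B(0,2^{-k})$, a set of measure comparable to $|Q_\beta|$ that is entirely contained in the enlarged cube $\tilde Q_\beta$ (which has side $10\cdot2^{-i_j}\gg 2^{-i_j}+2\cdot2^{-k}$). There is no ``thin boundary shell'' of $h_\beta*\sigma_{2^{-k}}$ lying in $\mathbb{R}^d\setminus\mathcal{E}$; the compactly supported part of the convolution contributes \emph{nothing} at all outside $\mathcal{E}$. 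Consequently your plan of combining a small-measure estimate with the $L^1\times L^1\to L^1$ bound of \cite{IPS} and H\"older has nothing to act on, and (even setting aside this objection) a Cauchy--Schwarz step on a set of measure $\sim 2^{i_j-k}|Q_\beta|$ would at best produce $2^{(i_j-k)/2}$, not the stated $2^{i_j-k}$. In the paper the mechanism is inverted relative to your framing: for $x\notin\mathcal{E}$ every evaluation point $x+2^{-k}u-2^{-k}y$ (with $u\in T$, $|y|\leq1$) lies at distance $\gtrsim 2^{-i_j}$ from $\operatorname{supp}h_{\beta_j}$, so \emph{all} of the contribution is Schwartz-tail leakage of the kernel $\psi_{2^{-(n_j+k)}}$ at a distance much larger than its scale $2^{-(n_j+k)}$. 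This is implemented by truncating the majorant, $\Phi=\Psi\chi_{|y|\geq 2^{-i_j}}$, and running the same covering argument, giving a rapid decay $\lesssim 2^{-N(n_j+k-i_j)}$ which dominates the stated $2^{i_j-k}$. In other words, the Schwartz tails you flag as an obstacle are in fact the entire source of the gain, not a threat to it; your ``geometric gain from the boundary-shell estimate'' does not exist in this configuration.
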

	\begin{proof}
		Note that from $(ii)$ of Lemma \ref{mainests} we have
		\begin{eqnarray*}
			\int_{\mathbb{R}^{d}\setminus \mathcal{E}}\Big|\mathcal{A}^{T}_{2^{-k}}(R_{n_1+k}h^{i_1}_1,R_{n_2+k}h^{i_2}_2)\Big|^{1/2}&\lesssim &2^{(n_1+n_2)s/2}\Vert \tilde{R}_{n_1+k}h^{i_1}_{1}\Vert^{1/2}_{L^{1}}\Vert \tilde{R}_{n_2+k}h^{i_2}_{2}\Vert^{1/2}_{L^{1}}\\
			&\lesssim& 2^{(n_1+n_2)s/2}\Vert h^{i_1}_{1}\Vert^{1/2}_{L^{1}}\Vert h^{i_2}_{2}\Vert^{1/2}_{L^{1}}.
		\end{eqnarray*}
		Moreover, when $i_j>n_j+k$, we use the mean zero property of the functions $h_{\beta_j}$. Note that $h^{i_j}_j=\sum_{l(Q_{\beta_j})=2^{-i_j}}h_{\beta_j}$ . Therefore, 
		\begin{eqnarray*}
			\tilde{R}_{n_j+k}h_{\beta_j}(x)&=&\int_{Q_{\beta_j}} \Big(\tilde{\psi}_{2^{-(n_j+k)}}(x-y)-\tilde{\psi}_{2^{-(n_j+k)}}(x-c_{Q})\Big)h_{\beta_j}(y)dy\\
			&=&\int_{\mathbb{R}^{d}}\int^{1}_{0}\langle \nabla_{y}\tilde{\psi}_{2^{-(n_j+k)}}(x-c_{Q}-t(y-c_{Q})),y-c_{Q}\rangle dt~h_{\beta_j}(y) dy, 
		\end{eqnarray*}
		where $c_Q$ is the center of $Q_{\beta_j}$. Observe that 
		\[\Big||y-c_{Q}|^{-1}\langle \nabla_{y}\tilde{\psi}_{2^{-(n_j+k)}}(x-c_{Q}-t(y-c_{Q})),y-c_{Q}\rangle\Big|\lesssim 2^{(n_j+k)(d+1)}\Big(1+|x-c_{Q}-t(y-c_{Q})|\Big)^{-N},\]
		for any $N>1$. Note that $x\notin \mathcal{E}$, therefore $|x-c_{Q}-t(y-c_{Q})|\gtrsim |x-c_Q|$. Now, applying Minkowski's integral inequality we get 
		\begin{eqnarray*}
			\Vert \tilde{R}_{n_j+k}h^{i_j}_{j}\Vert_{L^{1}}\lesssim 2^{(n_j+k)} 2^{-i_j}\Vert h^{i_j}_{j}\Vert_{L^{1}}.
		\end{eqnarray*}
		On the other hand, when $i_j<k$, we need to run the proof of $(ii)$ of Lemma \ref{mainests} with slight modifications. Note that $x\notin \mathcal{E}$. 
		Let $k\in \mathbb{Z}$ and w.l.o.g assume that $0>k>i_1$. Also let $\R^d=\cup_{Q\in\mathfrak C}Q$, where $Q\in\mathfrak{C}$ are cubes with side length $2^{-k}$ and sides parallel to the coordinate axes, and $aQ$ is the concentric cube containing $Q$ with side length $a2^{-k}$, where the absolute parameter $a>2$ is such that the compact set $T\subset B(0,\frac{a}{2}-1)$. 
		\begin{eqnarray*}
			&&\|\mathcal A^{T}_{2^{-k}}(R_{n_1+k}h^{i_1}_1,R_{n_2+k}h^{i_2}_2)\|_{L^{1/2}(\mathbb{R}^{d}\setminus\mathcal{E})}^\frac{1}{2}\\	
			&&=\int_{\mathbb{R}^{d}\setminus\mathcal{E}}\left(\sup\limits_{u,v\in T}\Big|\sum\limits_{Q\in\mathfrak{C}}((h^{i_1}_1*\psi_{2^{-n_1-k}})\chi_{_Q}\otimes( h^{i_2}_2*\psi_{2^{-n_2-k}}))*\sigma_{2^{-k}}(x+2^{-k}u,x+2^{-k}v)\Big|\right)^\frac{1}{2}dx\\
			&&\leq\sum_{Q\in\mathfrak{C}}\int_{aQ\setminus\mathcal{E}}\left(\sup\limits_{u,v\in T}((|h^{i_1}_1|*|\psi_{2^{-n_1-k}}|)\chi_{_Q}\otimes( |h^{i_2}_2|*\Psi_{2^{-n_2-k}})\chi_{_{3aQ}})*\sigma_{2^{-k}}(x+2^{-k}u,x+2^{-k}v)\right)^\frac{1}{2}dx\\
			&&\lesssim\sum_{Q\in\mathfrak{C}}\int_{aQ\setminus\mathcal{E}}\Big[\int_{\s^{2d-1}}\sup\limits_{u}\;\left((|h^{i_1}_1|*\Psi_{2^{-n_1-k}})\chi_{_Q}\right)(x+2^{-k}u-2^{-k}y)\\
			&&\sup\limits_{v}\;\left(( |h^{i_2}_2|*\Psi_{2^{-n_2-k}})\chi_{_{3aQ}}\right)(x+2^{-k}v-2^{-k}z)d\sigma(y,z)\Big]^\frac{1}{2}~dx
		\end{eqnarray*}
		Since $x\notin \mathcal{E}$  and $h^{i_1}_1$ is sum of $h_{\beta_1}$ with $l(Q_{\beta_1})=2^{-i_1}$. Therefore,  $dist(x+2^{-k}u-2^{-k}y, y')\geq 2^{2-i_1}$, as $y'\in supp(h_{\beta_1})$ and   $2^{-k}<2^{-i_1}$.
		Let $C_1$ be the set of centers of balls of radius $2^{-n_1}$ covering $T$ and $C_2$ be the set of centers of balls of radius $2^{-n_2}$ covering $T$. Observe that  $\Psi_{2^{-n_1-k}}(2^{-k}w_1)\lesssim\Psi_{2^{-n_1-k}}(2^{-k}w_2)$ for $|w_1-w_2|\leq2^{-n_1}$ and  $\Psi_{2^{-n_2-k}}(2^{-k}w_1)\lesssim\Psi_{2^{-n_2-k}}(2^{-k}w_2)$ for $|w_1-w_2|\leq2^{-n_2}$. Here we define $\Phi(y)=\Psi(y)\chi_{|y|\geq 2^{-i_1}}$. 
		Then, we have
		\begin{eqnarray*}
			&&\|\mathcal A^{T}_{2^{-k}}(R_{n_1+k}h^{i_1}_1,R_{n_2+k}h^{i_2}_2)\|_\frac{1}{2}^\frac{1}{2}\\
			&&\lesssim\sum_{Q\in\mathfrak{C}}\int_{aQ\setminus\mathcal{E}}\Big[\int_{\s^{2d-1}}\sum\limits_{u\in C_1}\;\left((|h^{i_1}_1|*\Psi_{2^{-n_1-k}})\chi_{_{2Q}}\right)(x+2^{-k}u-2^{-k}y)\\
			&&\sum\limits_{v\in C_2}\;\left(( |h^{i_2}_2|*\Psi_{2^{-n_2-k}})\chi_{_{4aQ}}\right)(x+2^{-k}v-2^{-k}z)d\sigma(y,z)\Big]^\frac{1}{2}\;dx\\
			&&\lesssim\sum_{Q\in\mathfrak{C}}\Big[\sum\limits_{u\in C_1,v\in C_2}\;\sum_{l(Q_{\beta_1})=2^{-i_1}}\|\tau_{-u}\delta_{2^{-k}}\left((|h^{i_1}_{\beta_1}|*\Phi_{2^{-n_1-k}})\chi_{_{2Q}}\right)(2^{k}\cdot)\|_1\\ 
			&&\|\tau_{-v}\delta_{2^{-k}}\left(( |h^{i_2}_2|*\Psi_{2^{-n_2-k}})\chi_{{4aQ}}\right)(2^{k}\cdot)\|_1\Big]^\frac{1}{2}\\
			&&\leq2^{(n_1+n_2)\frac{s}{2}}\left(\sum_{Q\in\mathfrak{C}}\sum_{l(Q_{\beta_1})=2^{-i_1}}\|(|h^{i_1}_{\beta_1}|*\Phi_{2^{-n_1-k}})\chi_{_{2Q}}\|_1\right)^\frac{1}{2}\left(\sum_{Q\in\mathfrak{C}}\|(|h^{i_2}_2|*\Psi_{2^{-n_2-k}})\chi_{_{4aQ}}\|_1\right)^\frac{1}{2}\\
			&&\lesssim2^{(n_1+n_2)\frac{s}{2}} 2^{-N(n_1+k-i_1)}\|h^{i_1}_1\|_1^\frac{1}{2}\|h^{i_2}_2\|_1^\frac{1}{2},
		\end{eqnarray*}
		In the last inequality we have used Young's inequality. The case when $k>0$ and $k>i_j$ can be handled with the similar argument with slight modifications. This completes the proof of Lemma \ref{badfunction}.
	\end{proof}
	Now, invoking Lemma $5.2$ of \cite{ChristZhou} we get 
	\begin{eqnarray}
		\sum_{k\in\mathbb{Z}}\sum_{i_1,i_2\in\mathbb{Z}}\min_{j=1,2}\min\Big(2^{i_j-k}, 2^{k+n_j-i_j},1\Big)\prod^{2}_{j=1}\Vert h^{i_j}_{j}\Vert^{1/2}_{L^{1}}\lesssim |\mathbf{n}|^{2}\prod^{2}_{j=1}\Vert h_{j}\Vert^{1/2}_{L^{1}}.
	\end{eqnarray}

	\textbf{Contribution from $\mathcal{M}^{T}_{\mathbf{n}}(h_1,g_2)$, $\mathcal{M}^{T}_{\mathbf{n}}(g_1,h_2)$ and  $\mathcal{M}^{T}_{\mathbf{n}}(g_1,g_2)$: }\\
	Firstly, we estimate \[|\{x:\mathcal{M}^{T}_{\mathbf{n}}(h_1,g_2)>\alpha/4\}|.\]
	Define $\mathcal{E}_{1}=\cup_{\beta_1}\tilde{Q}_{\beta_1}$ such that $|\mathcal{E}_1|\lesssim \alpha^{-1/2}$. Then using similar and rather simplified argument as above we get 
	\begin{eqnarray*}
		|\{x\in \mathbb{R}^{d}\setminus \mathcal{E}_1:\mathcal{M}^{T}_{\mathbf{n}}(h_1,g_2)(x)>\alpha/4\}|\lesssim |\mathbf{n}|^2\alpha^{-1/2}.
	\end{eqnarray*}
	Therefore, we get \begin{eqnarray*}
		|\{x\in \mathbb{R}^{d}:\mathcal{M}^{T}_{\mathbf{n}}(h_1,g_2)(x)>\alpha/4\}|\lesssim |\mathbf{n}|^2\alpha^{-1/2}+\alpha^{-1/2}.
	\end{eqnarray*}
	Similarly for $\mathcal{M}^{T}_{\mathbf{n}}(g_1,h_2)$ we get the similar estimate. Note that $\Vert g_j\Vert_{L^{\infty}}\leq c_0 \alpha^{1/2}$ and 
	\[\mathcal{M}^{T}_{\mathbf{n}}(g_1,g_2)\leq C_d \Vert g_1\Vert_{L^{\infty}}\Vert g_2\Vert_{L^{\infty}}\leq C_{d}c^{2}_{0}\alpha\]
	Therefore, we choose $c_0>0$ such that $C_dc^{2}_0<1/4$. This implies 
	\[|\{x:\mathcal{M}^{T}_{\mathbf{n}}(g_1,g_2)(x)>\alpha/4\}|=0.\]
	This completes the proof of Lemma \ref{MTn}.
\end{proof}

\textbf{Proof of boundedness in the region $\Omega(\{O,A,Q,P,R,B\})$:} It remains to prove the boundedness in the triangles $\Omega(\{A,Q,P\})$ and $\Omega(\{P,R,B\})$. We prove the boundedness in the region $\Omega(\{A,Q,P\})$ and the other follows by symmetry. Moreover, by interpolation, it is enough to prove weak type bounds on the line segment $AQ$, excluding the point $Q$.

Let $A_l=\{y\in\R^d:2^{-l}<\sqrt{1-|y|^2}\leq2^{-l+1}\}$. By slicing argument, we can write the integral as
\begin{eqnarray*}
	&&\left|\int_{\mathbb{S}^{2d-1}} f_1(x+2^{k}(u+y))f_2(x+2^{k}(v+z))\;d\sigma_{2d-1}(y,z)\right|\\&=&\left|\int_{B(0,1)}f_1(x+2^{k}(u+y))(1-|y|^2)^{\frac{d-2}{2}}\int_{\mathbb{S}^{d-1}}f_2(x+2^{k}(v+\sqrt{1-|y|^2}z))\;d\sigma(z)dy\right|\\
	&=&\left|\sum_{l=1}^{\infty}\int_{A_l}f_1(x+2^{k}(u+y))(1-|y|^2)^{\frac{d-2}{2}}\int_{S^{d-1}}f_2(x+2^{k}(v+\sqrt{1-|y|^2}z))\;d\sigma(z)dy\right|\\
	&\lesssim& \sum_{l=1}^\infty 2^{-l(d-2)}\left(\sup_{2^{-l}<r\leq2^{-l+1}}\left|\int_{\mathbb{S}^{d-1}}f_2(x+2^{k}(v+rz))\;d\sigma(z)\right|\right)\int_{A_l}|f_1(x+2^{k}(u+y))|~dy.
\end{eqnarray*}

Therefore, we get that
\begin{eqnarray*}
	\mathcal N_{lac}^T (f_1,f_2)(x)\lesssim \sum_{l=1}^\infty 2^{-l(d-2)} N_{T,l}(f_2)(x)M_l(f_1)(x),
\end{eqnarray*}
where
\begin{eqnarray*}
	N_{T,l}(f_2)(x)&=&\sup_{k\in\Z}\sup_{v\in T}\sup_{2^{-l}<r\leq2^{-l+1}}\left|\int_{\mathbb{S}^{d-1}}f_2(x+2^k(v+rz))\;d\sigma(z)\right|\\
	&=&\sup_{k\in\Z}\sup_{v\in T}\sup_{2^{-l}<r\leq2^{-l+1}}|f_2\ast\sigma_{2^kr}(x+2^kv)|
\end{eqnarray*}
and
\[M_l(f_1)(x)=\sup_{k\in\Z}\sup_{u\in T}\int_{A_l}|f_1(x+2^{k}(u+y))|~dy.\]
We can see that $M_l$ maps $L^1(\mathbb R^d)$ to $L^{1,\infty}(\mathbb R^d)$ for all $1\leq l< \infty$ with constant independent of $l$. Now, we provide $L^p$-estimates for the intermediary operators $N_{T,l}$.
\begin{lemma}\label{contlemma}
	Let $d\geq 3$ and $\epsilon>0$. Suppose that $T\subset \R^d$ is a compact set with finite upper Minkowski content $s<d-2$. Then 
	\[\|N_{T,l}(f)\|_p\lesssim \max\{2^{l(\frac{2d-(1+\epsilon)s}{p}-d+(1+\epsilon)s)},l2^{ls}\}\|f\|_p \quad \text{for}\quad p> 1+ \frac{1}{d-s-1}.\]
\end{lemma}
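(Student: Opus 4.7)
The plan is to adapt the frequency-decomposition arguments developed for $N^T_{lac}$ to the present setting, keeping careful track of the parameter $l$. The substitution $t = 2^k r$ converts the joint supremum over lacunary $k$ and continuous $r \in (2^{-l}, 2^{-l+1}]$ into a continuous supremum over $t > 0$, while the translation becomes $2^k v = (t/r) v$ with $r^{-1} \sim 2^l$; setting $u = v/r$, we obtain the pointwise domination
\[ N_{T,l}(f)(x) \lesssim \sup_{t > 0}\,\sup_{u \in T_l} \big|f \ast \sigma_t(x+tu)\big|, \]
where $T_l \subset \bigcup_{\lambda \in [2^{l-1}, 2^l]} \lambda T$ satisfies the Minkowski-type bound $N(T_l, \delta) \lesssim 2^{ls} \delta^{-s}$ for $0 < \delta < 2^{-l}$. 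This reduces the problem to a continuous-scale uncentered spherical maximal operator with explicit dependence of the underlying set on $l$.

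The next step is a Littlewood--Paley decomposition at scale $t$: using \eqref{identity}, write $f \ast \sigma_t = f \ast \phi_t \ast \sigma_t + \sum_{n \geq 1} f \ast \psi_{2^{-n}t} \ast \sigma_t$. The low-frequency piece is pointwise dominated by the Hardy--Littlewood maximal function times a factor $\lesssim 2^{ls}$ coming from a standard covering argument on $T_l$. For each high-frequency piece, I apply Lemma~\ref{coveringlemma} with $T$ replaced by $T_l$ and cover parameter $2^{-n}$, yielding $\sharp (T_l)_n \lesssim 2^{(l+n) s}$ linear operators and reducing the analysis, for each fixed $u$ in the cover, to the continuous-scale maximal function $\sup_{t > 0}|(f \ast \psi_{2^{-n}t} \ast \sigma_t)(x + tu)|$.

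For each fixed $u$, I would bound the continuous-scale maximal function in $L^2$ using Plancherel and the decay $|\widehat{\sigma}(t\xi)| \lesssim (t|\xi|)^{-(d-1)/2}$, gaining a factor $2^{-n(d-1)/2}$; the continuous supremum in $t$ is handled by a Sobolev-type estimate $\sup_t |F(t)|^2 \lesssim \int_0^\infty |F(t)\,F'(t)|\,dt/t$, losing a factor of $2^{n \epsilon}$ from the $t$-derivative acting on the $n$-th frequency piece. Interpolating the resulting $L^2$ bound with the trivial $L^\infty$ bound of order one, and summing over $n$ with the Minkowski factor $2^{(l+n)s}$ from the covering step, yields the required $L^p$ estimate in the range $p > 1 + 1/(d-s-1)$. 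The two terms in the $\max$ reflect two regimes: the first term $2^{l((2d-(1+\epsilon)s)/p - d + (1+\epsilon)s)}$ arises from the geometric sum at frequencies $n \gtrsim l$, while the second term $l \cdot 2^{ls}$ arises at the critical threshold $n \sim l$ where the geometric series collapses into a logarithmic sum of $O(l)$ equal-size terms.

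The main obstacle will be correctly tracking the $l$-dependence through the modified covering step for $T_l$ and the Sobolev loss in the continuous-$t$ argument, and verifying that the interpolation at the borderline exponent $p = 1 + 1/(d-s-1)$ produces exactly the claimed maximum of two terms. This endpoint $p$ is precisely the threshold at which the geometric series in $n$ ceases to be summable; the $\epsilon > 0$ loss in the Sobolev step is what allows interpolation to recover boundedness strictly above this endpoint.
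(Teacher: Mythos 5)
The overall blueprint is in the right direction (Littlewood--Paley decomposition in frequency, covering of the translation set at a matching scale, separate handling of the continuous scale parameter, interpolation with the low-frequency piece), but two of the key technical claims are false as stated, and the errors are fatal for the claimed exponents.

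\textbf{The Minkowski count for $T_l$ is off by a full dimension.} You define $T_l\subset\bigcup_{\lambda\in[2^{l-1},2^l]}\lambda T$ and claim $N(T_l,\delta)\lesssim 2^{ls}\delta^{-s}$. The right-hand side is the count for a \emph{single} dilate $\lambda T$ with $\lambda\sim 2^l$, not for the union over the continuum of dilates. The union inherits an extra dimension from the parameter $\lambda$: e.g.\ if $T=\{e_1\}$ (so $s=0$), then $T_l$ is a line segment of length $\sim 2^l$ and $N(T_l,\delta)\sim 2^l\delta^{-1}$, while your formula gives $O(1)$. In general one only has $N(T_l,\delta)\lesssim 2^{l(s+1)}\delta^{-(s+1)}$, which would replace $s$ by $s+1$ in the covering factor $\sharp(T_l)_n$ and push the summability threshold from $s<d-2$ to $s<d-3$, breaking the statement. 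The paper avoids this by \emph{not} folding $r$ into the translation set: in the $(k,v,r)$ coordinates the translation $2^k v$ is independent of $r$, so one covers $T$ (not $T_l$) at scale $2^{-(j+l)}$, obtaining $\sharp C'\lesssim 2^{(j+l)s}$, and then treats the continuous $r$-supremum by a square-function estimate (Lemma 6.5.2 of \cite{classicalFAGrafakos}) combined with the $M_{HL}$-trick (raising to the power $\frac{2}{1+\epsilon}$), which is where the paper's $\epsilon$-loss enters --- not from a $t$-derivative/Sobolev step.

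\textbf{The low-frequency piece is not pointwise $\lesssim 2^{ls} M_{HL}f$.} The kernel $\phi_t*\sigma_t$ is a bump at scale $t$, but it is evaluated at $x+2^kv$ with $|2^kv|\sim 2^{k}\sim 2^l t$, i.e.\ at distance $\sim 2^l$ times the averaging scale. A local average of $f$ at $x+2^kv$ cannot be dominated by $M_{HL}f(x)$ with a uniform constant; the pointwise domination would cost $2^{ld}$, which is far too much. This is precisely why the paper introduces the \emph{shifted} maximal function $\mathfrak M_{2^l v}$ and invokes its logarithmic $L^p$ operator norm $\log(1+2^l|v|)\sim l$ from \cite{shiftedmaximal}, yielding $\|N_{T,l}^0\|_p\lesssim l\,2^{ls}$. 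Your argument does not produce the extra factor of $l$, and consequently your explanation of the second term in the $\max$ ("a logarithmic sum of $O(l)$ equal-size terms at $n\sim l$") is also not the correct mechanism --- $l\,2^{ls}$ is the contribution of the $j=0$ piece, not a threshold phenomenon in the $j$-sum.

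To fix the proof you would need to (i) stay in coordinates where the translation is $r$-independent and cover $T$ at scale $2^{-(j+l)}$ rather than covering $T_l$, (ii) handle the continuous $r$-supremum by a square-function estimate after pulling the cover sum outside via the $M_{HL}$-trick, and (iii) use shifted maximal functions for the $j=0$ piece.
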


\begin{proof}
	Fix $r\in(2^{-l},2^{-l+1}]$ and using identity \ref{identity}, we write $\sigma_{2^kr}$ as
	\[\sigma_{2^kr}=\phi_{2^kr}\ast\sigma_{2^kr}+\sum_{j=1}^{\infty}\psi_{2^{k-j}r}\ast \sigma_{2^kr}.\]
	The above equation gives the following decomposition of $N_{T,l}$.
	\begin{align*}
		N_{T,l}f(x)&\leq \sup_{k\in\Z}\sup_{v\in T}\sup_{2^{-l}<r\leq2^{-l+1}}|f\ast\phi_{2^kr}\ast\sigma_{2^kr}(x+2^kv)|+\sum_{j=1}^\infty\sup_{k\in\Z}\sup_{v\in T}\sup_{2^{-l}<r\leq2^{-l+1}}|f\ast\psi_{2^{k-j}r}\ast\sigma_{2^kr}(x+2^kv)|\\
		&=\sum_{j=0}^\infty N_{T,l}^jf(x),
	\end{align*}
	where
	\begin{align*}
		N_{T,l}^0f(x)&=\sup\limits_{k\in\Z}\sup\limits_{v\in T}\sup\limits_{2^{-l}<r\leq2^{-l+1}}|f\ast\phi_{2^kr}\ast\sigma_{2^kr}(x+2^kv)|\;\text{ and }\\
		N_{T,l}^jf(x)&=\sup\limits_{k\in\Z}\sup\limits_{v\in T}\sup\limits_{2^{-l}<r\leq2^{-l+1}}|f\ast\psi_{2^{k-j}r}\ast\sigma_{2^kr}(x+2^kv)|,\;j\geq1
	\end{align*}
	It is enough to prove $L^p$-estimates for each $N_{T,l}^j$ with some decay in $j$ and small enough uniform growth in $l$ so that the sum in $l$ is finite.
	
	For $j=0$, let $\tilde{\phi}\in \mathcal S(\R^d)$ be such that $\widehat{\tilde{\phi}}(\xi)=1$ on the support of $\hat{\phi}$. Now, Let $\Phi_{2^kr}(x)=\frac{1}{2^{kd}r^d}\left(2+\frac{|x|}{2^kr}\right)^{-(d+1)}$ so that $|\tilde{\phi}_{2^kr}(x)|\lesssim\Phi_{2^kr}(x)$ and note that $\Phi_{2^kr}(w_1)\lesssim\Phi_{2^kr}(w_2)$ whenever $|w_1-w_2|\leq2^kr$. Assume $C$ to be a collection of the center of balls of radius $r$ covering the set $T$. Using the covering argument, we obtain
	\begin{align}
		\sup\limits_{v\in T}|f\ast\phi_{2^kr}\ast\sigma_{2^kr}(x+2^kv)|&=\sup\limits_{v\in T}|f\ast\phi_{2^kr}\ast\tilde{\phi}_{2^kr}\ast\sigma_{2^kr}(x+2^kv)|\nonumber\\
		&\lesssim\sup\limits_{v\in C}|f\ast\phi_{2^kr}\ast\sigma_{2^kr}\ast\Phi_{2^kr}(x+2^kv)|\nonumber\\
		&\lesssim\sum_{v\in C}|f\ast\phi_{2^kr}\ast\sigma_{2^kr}\ast\Phi_{2^kr}(x+2^kv)|.\label{cover}
	\end{align}
	Let $h(x)=\sup\limits_{k\in\Z}\sup\limits_{2^{-l}<r\leq2^{-l+1}}|f\ast\phi_{2^kr}\ast\sigma_{2^kr}(x)|$. Then, we have
	\begin{align*}
		h\ast\Phi_{2^kr}(x+2^kv)&=\int_{\R^d}h(x+2^kv-y)\frac{1}{2^{kd}r^d}\left(2+\frac{|y|}{2^kr}\right)^{-(d+1)}\;dy\\
		&=\int_{\R^d}h(x+2^kv-2^{k-l}y)\frac{1}{2^{ld}r^d}\left(2+\frac{|y|}{2^lr}\right)^{-(d+1)}\;dy\\
		&\lesssim\int_{\R^d}h(x+2^{k-l}(y-2^lv))\left(1+|y|\right)^{-(d+1)}\;dy,\label{shift}
	\end{align*}
	where we have used a change of variable $y\to 2^{k-l}y$ in the second step along with $2^{-l}<r\leq2^{-l+1}$. Thus
	\begin{equation}\label{shift}
		\sup_{k\in\Z}\sup_{2^{-l}<r\leq2^{-l+1}}h\ast\Phi_{2^kr}(x+2^kv)\lesssim\mathfrak{M}_{2^lv}h(x),
	\end{equation}
	where $\mathfrak{M}_{2^lv}$ is the shifted Hardy-Littlewood maximal function with a shift by $2^lv$. 
	Therefore, covering argument \eqref{cover} and equation \eqref{shift} give us
	\begin{align*}
		N_{T,l}^0f(x)&\lesssim\sum_{v\in C}\mathfrak{M}_{2^lv}h(x)\\
		&\lesssim\sum_{v\in C}\mathfrak{M}_{2^lv}(M_{HL}f)(x),
	\end{align*}
	where the final inequality in the above follows from the kernel estimates of $\phi_{2^kr}\ast\sigma_{2^kr}$. Using the $L^p$-estimates of Hardy-Littlewood maximal function and the logarithmic bound for shifted maximal function from \cite{shiftedmaximal}, we obtain
	\[\|N_{T,l}^0f\|_p\lesssim\sum_{v\in C}\log(1+2^l|v|)\|f\|_p,\quad p>1.\]
	Since $T$ is a compact set and $\# C\lesssim2^{ls}$, we have
	\begin{equation}\label{j=0}
		\|N_{T,l}^0f\|_p\lesssim l2^{ls}\|f\|_p,\quad p>1.
	\end{equation}
	
	For $j\geq1$, we have the following kernel estimate
	\[|\psi_{2^{k-j}r}\ast \sigma_{2^kr}(x+2^kv)|\lesssim\frac{2^j}{2^{kd}r^d}\left(1+\frac{|x+2^kv|}{2^kr}\right)^{-N}.\]
	The above kernel estimate gives us
	\begin{equation}\label{NjL1}
		N_{T,l}^j f(x)\lesssim 2^j2^{ld}M_{HL}f(x).
	\end{equation}
	
	Next, we prove the $L^2$-estimates using a covering argument and Littlewood-Paley theory. Let $C'$ be the center of the balls of radius $2^{-(j+l)}$ covering the set $T$ and recall that $\Psi_{2^{-j}r}(w_1)\lesssim\Psi_{2^{-j}r}(w_2)$ whenever $|w_1-w_2|\leq2^{-j}r$. Then, for $\epsilon>0$
	\begin{align*}
		\sup_{v\in T}|f\ast\psi_{2^{-j}r}\ast\sigma_r(x+v)|^{\frac{2}{1+\epsilon}}&\lesssim\sup_{v\in C'}|f\ast\psi_{2^{-j}r}\ast\sigma_r|^{\frac{2}{1+\epsilon}}\ast\Psi_{2^{-j}r}(x+v)\\
		&\lesssim\sum_{v\in C'}M_{HL}(|f\ast\psi_{2^{-j}r}\ast\sigma_r|^{\frac{2}{1+\epsilon}})(x+v).
	\end{align*}
	Therefore, we have
	\begin{align*}
		&\int\Big(\sup_{2^{-l}<r\leq2^{-l+1}}\sup_{v\in T}|f\ast\psi_{2^{-j}r}\ast\sigma_r(x+v)|\Big)^2\;dx\\
		&\lesssim\int\Big(\sup_{2^{-l}<r\leq2^{-l+1}}\sum_{v\in C'}M_{HL}(|f\ast\psi_{2^{-j}r}\ast\sigma_r|^{\frac{2}{1+\epsilon}})(x+v)\Big)^{1+\epsilon}\;dx\\
		&\lesssim2^{(j+l)s\epsilon}\sum_{v\in C'}\int \Big(M_{HL}(\sup_{2^{-l}<r\leq2^{-l+1}}|f\ast\psi_{2^{-j}r}\ast\sigma_r|^{\frac{2}{1+\epsilon}})(x+v)\Big)^{1+\epsilon}\;dx\\
		&\lesssim2^{(j+l)s\epsilon}\sum_{v\in C'}\int\Big(\sup_{2^{-l}<r\leq2^{-l+1}}|f\ast\psi_{2^{-j}r}\ast\sigma_r|(x)\Big)^{2}\;dx\\
		&\lesssim2^{(j+l)(1+\epsilon)s}\|f\ast\psi_{2^{-j}r}\ast\sigma_r\|_2^2,
	\end{align*}
	where we use a change of variable in $x$ and the boundedness of $M_{HL}$ in the second to last inequality.
	
	Let $\tilde{\psi}\in \mathcal S(\R^d)$ be such that $\widehat{\tilde{\psi}}(\xi)=1$ on $[\frac{1}{4},4]$ and $\supp(\widehat{\tilde{\psi}})\subset[\frac{1}{8},8]$. Then a scaling argument and Lemma 6.5.2 from \cite{classicalFAGrafakos} gives us that
	\begin{align*}
		\Big\|\sup_{2^{-l}<r\leq2^{-l+1}}|f\ast\psi_{2^{-j}r}\ast\sigma_r|\Big\|_2&=\Big\|\sup_{2^{-l}<r\leq2^{-l+1}}|f\ast\tilde{\psi}_{2^{-(j+l)}}\ast\psi_{2^{-j}r}\ast\sigma_r|\Big\|_2\\
		&\lesssim2^{-j(\frac{d-2}{2})}\|f\ast\tilde{\psi}_{2^{-(j+l)}}\|_2.
	\end{align*}
	Thus, the estimate above implies that
	\[\Big\|\sup_{v\in T}\sup_{2^{-l}<r\leq2^{-l+1}}|f\ast\psi_{2^{-j}r}\ast \sigma_{r}(x+v)|\Big\|_2\lesssim2^{\frac{(1+\epsilon)ls}{2}}2^{-\frac{(d-2-(1+\epsilon)s)j}{2}}\|f\ast\tilde{\psi}_{2^{-(j+l)}}\|_2.\]
	
	Now for $k\geq1$, using a scaling argument we obtain
	\[\Big\|\sup_{v\in T}\sup_{2^{-l}<r\leq2^{-l+1}}|f\ast\psi_{2^{k-j}r}\ast \sigma_{2^kr}(x+2^kv)|\Big\|_2\lesssim2^{\frac{(1+\epsilon)ls}{2}}2^{-\frac{(d-2-(1+\epsilon)s)j}{2}}\|f\ast\tilde{\psi}_{2^{k-(j+l)}}\|_2.\]
	Therefore, the Littlewood-Paley inequality and almost disjoint support of $\widehat{\tilde{\psi}_j}$'s will give us
	\[\Big\|\sup_{k\in\Z}\sup_{v\in T}\sup_{2^{-l}<r\leq2^{-l+1}}|f\ast\psi_{2^{k-j}r}\ast \sigma_{2^kr}(x+2^kv)|\Big\|_2\lesssim2^{\frac{(1+\epsilon)ls}{2}}2^{-\frac{(d-2-(1+\epsilon)s)j}{2}}\|f\|_2.\]	
	Interpolating the above $L^2$-estimates and $L^1$-estimate from \eqref{NjL1}, we get that for $1<p\leq2$
	\[\Big\|\sup_{k\in\Z}\sup_{v\in T}\sup_{2^{-l}<r\leq2^{-l+1}}|f\ast\psi_{2^{k-j}r}\ast \sigma_{2^kr}(x+2^kv)|\Big\|_p\lesssim2^{-j(d-s-1-\frac{d-s}{p})}2^{l(\frac{2d-(1+\epsilon)s}{p}-d+(1+\epsilon)s)}\|f\|_p.\]
	Hence, summing in $j$ concludes the proof of the lemma.
\end{proof}
We now complete the proof of the boundedness of $\mathcal{N}^T$ at points on the line segment $AQ$, excluding the point $Q$. By Lemma \ref{contlemma} and the weak type $(1,1)$ boundedness of $M_l$, we get that
\[\|\mathcal{N}^T(f_1,f_2)\|_{\frac{p}{p+1},\infty}\lesssim\sum_{l=1}^\infty 2^{-l(d-2)}\max\{2^{l(\frac{2d-(1+\epsilon)s}{p}-d+(1+\epsilon)s)},l2^{ls}\}\|f_1\|_1\|f_2\|_p\lesssim\|f_1\|_1\|f_2\|_p,\]
where the series in $l$ is summable for $p>1+\frac{1}{d-s-1}$ and $s<d-2$.
This completes the proof of Theorem \ref{bilinearNT}.\qed

\section{Proof of Theorem \ref{NT}: Lacunary uncentered spherical maximal operator 
$N^{T}_{lac}$}\label{Sec:prooflacuncentered}

Using the partition of unity ~\eqref{identity}, the lacunary maximal function $N_{lac}^T$ can be dominated by 
\begin{eqnarray}\label{reduction1}
	N^{T}_{lac}f(x)&\leq& \sup_{k\in\mathbb{Z}}|A^{T}_{2^k,0}f(x)|+\sum^{\infty}_{j=1}\sup_{k\in\mathbb{Z}}|A^{T}_{2^{k},j}f(x)|,
\end{eqnarray}
where   
\begin{align}
	A^{T}_{2^k,j}f(x)&=\sup_{u\in T}|(f*\psi_{2^{k-j}}*\sigma_{2^{k}})(x+2^ku)|,\;j\geq 1,\\
	A^{T}_{2^k,0}f(x)&=\sup_{u\in T}|(f*\phi_{2^k}*\sigma_{2^k})(x+2^ku)|.
\end{align}
We need to prove suitable estimates on the intermediary lacunary operators  
$$M^{T}_{j}f(x)=\sup_{k\in\mathbb{Z}}|A^{T}_{2^k,j}f(x)|,\;j\geq 0.$$
First, we observe that $M_0^Tf$ can be controlled by the classical Hardy-Littlewood maximal function $M_{HL}f$. 
Consider 
	\begin{eqnarray*}
		|A^{T}_{2^k,0}f(x)|&=&\sup_{u\in T}\left|\int_{\mathbb{S}^{d-1}}\phi_{2^k}*f(x+2^{k}(u+y))~d\sigma(y)\right|\\
		&\lesssim & \sup_{u\in T,|y|=1}|\phi_{2^k}*f(x+2^{k}(u+y))|\\
		&\lesssim & M_{HL}f(x).
	\end{eqnarray*}
Note that the final step in the above follows by the estimate~\eqref{prop1}. 

To deal with maximal operators $M_j^T$, we  will need the following $L^p$-estimates for the maximal averages $A^T_{1,j}$ from  
\cite{NikodymSetsAndMaximalFunctionsAssociatedWithSpheres}.
\begin{lemma}[\cite{NikodymSetsAndMaximalFunctionsAssociatedWithSpheres}]
	Let $j\in\N$. The following bounds hold true:
	\begin{align}
		\|A^{T}_{1,j}\|_{L^1(\R^d)\to L^1(\R^d)}&\lesssim \min\{2^{js},2^j\}. \label{L1-L1}\\
		\|A^{T}_{1,j}\|_{L^\frac{3}{2}(\R^d)\to L^\frac{3}{2}(\R^d)}&\lesssim j^\frac{1}{3}2^{\frac{j}{6}}, \label{L3/2-L3/2}\;\;d\geq3,\\
		\|A^{T}_{1,j}\|_{L^\frac{4}{3}(\R^d)\to L^\frac{4}{3}(\R^d)}&\lesssim j^\frac{1}{4}2^{\frac{j}{4}}, \label{L4/3-L4/3}\;\;d\geq4.
	\end{align}
\end{lemma}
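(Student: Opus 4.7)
The plan is to combine the covering Lemma~\ref{coveringlemma} with appropriate $L^p$-bounds on the single-scale piece $\mathsf{T}_j f := f\ast\psi_{2^{-j}}\ast\sigma$. At exponent $p$, Lemma~\ref{coveringlemma} together with $\#T_j \lesssim 2^{js}$ (the Minkowski content hypothesis) gives
\[\|A^T_{1,j}f\|_p \;\lesssim\; 2^{js/p}\,\|\mathsf{T}_j f\|_p,\]
so the task reduces to controlling $\mathsf{T}_j$ at each of the three $L^p$-levels with sufficient decay in $j$ to absorb (and improve upon) this loss.

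For \eqref{L1-L1}, the uniform bound $\|\psi_{2^{-j}}\ast\sigma\|_1 \lesssim 1$ (since $\psi$ is Schwartz and $\sigma$ is a probability measure) together with Young's inequality and the covering reduction at $p=1$ yields the $2^{js}$ half of the minimum. For the alternative $2^j$ bound one uses the pointwise kernel estimate $|\psi_{2^{-j}}\ast\sigma(x)| \lesssim 2^j\chi_{A_{2^{-j}}}(x) + (\text{rapidly decaying tail})$, where $A_{2^{-j}}$ is the $2^{-j}$-annular neighborhood of $\mathbb{S}^{d-1}$. Since $T$ lies in a fixed ball $B(0,R)$, one dominates $\sup_{u\in T}|\mathsf{T}_j f(x+u)|$ by $2^j$ times the integral of $|f|$ over a fixed bounded translate of $x$, and Fubini yields $\|A^T_{1,j}f\|_1\lesssim 2^j\|f\|_1$. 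Taking the better of the two estimates gives \eqref{L1-L1}.

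For \eqref{L3/2-L3/2} and \eqref{L4/3-L4/3} one needs a genuine gain in $\|\mathsf{T}_j f\|_p$ to absorb $2^{js/p}$ from the covering and still deliver the claimed growth. The starting point is the Plancherel estimate $\|\mathsf{T}_j f\|_2 \lesssim 2^{-j(d-1)/2}\|f\|_2$ coming from $|\widehat{\sigma}(\xi)|\lesssim |\xi|^{-(d-1)/2}$, combined with Stein--Tomas-type restriction estimates applied to the frequency-localized piece $f\ast\psi_{2^{-j}}$. Real interpolation with the $L^1$-bound $2^j$ above, packaged through a Littlewood--Paley square function that absorbs the $2^{-j}$-covering of $T$, produces the announced rates $2^{j/6}$ at $L^{3/2}$ for $d\geq3$ and $2^{j/4}$ at $L^{4/3}$ for $d\geq 4$. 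The logarithmic losses $j^{1/3}$ and $j^{1/4}$ are the standard price of such a square-function argument at Littlewood--Paley endpoints.

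The principal obstacle is the sharp tracking of the growth in \eqref{L3/2-L3/2}--\eqref{L4/3-L4/3}: naive Riesz--Thorin interpolation between the $L^1$ and $L^2$ estimates does not immediately reach $j^{1/3}2^{j/6}$ or $j^{1/4}2^{j/4}$, and the refinement requires careful use of the frequency localization together with the square-function packaging of the covering at scale $2^{-j}$, as worked out in~\cite{NikodymSetsAndMaximalFunctionsAssociatedWithSpheres}.
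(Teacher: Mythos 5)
Your treatment of \eqref{L1-L1} is fine and matches the paper: the $2^{js}$ bound is the covering Lemma~\ref{coveringlemma} applied at $p=1$, and the $2^{j}$ bound comes from the uniform kernel estimate $|\psi_{2^{-j}}*d\sigma(x)|\lesssim 2^{j}(1+2^{j}||x|-1|)^{-N}$ together with the fact that $T$ lives in a fixed ball, exactly as you describe.

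The route you propose for \eqref{L3/2-L3/2} and \eqref{L4/3-L4/3}, however, does not work, and it misidentifies what the reference actually proves. First, a structural objection: the claimed $L^{3/2}$ and $L^{4/3}$ bounds are \emph{independent of $s$}, whereas every ingredient you invoke through the covering lemma pays a factor $2^{js/p}$ (or $2^{js/2}$ at $L^2$). Interpolating the $L^{2}$ estimate $\|A^{T}_{1,j}\|_{2\to2}\lesssim 2^{-j(d-1-s)/2}$ (which is what Plancherel plus covering actually delivers) against the $L^{1}$ bound $\min\{2^{js},2^{j}\}$ gives, in $d=3$ at $p=3/2$, a bound of size $2^{j/3-j(2-s)/3}$; for $s$ close to $d-1=2$ this is $\approx 2^{j/3}$, strictly worse than the claimed $j^{1/3}2^{j/6}$. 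No amount of Littlewood--Paley ``packaging'' or Stein--Tomas input on the linear piece $f*\psi_{2^{-j}}*\sigma$ repairs this, because those are estimates for a fixed convolution operator and do not touch the real difficulty, which is the supremum over $u\in T$. Second, and decisively, the paper itself tells you what the correct route is: the $p=3/2$ and $p=4/3$ bounds in the reference are obtained by proving almost-sharp geometric (Kakeya--Nikodym type) estimates for the Nikodym maximal operator $N^{\delta}$, namely $\|N^{\delta}_{1}\|_{L^{3/2}\to L^{3/2}}\lesssim\delta^{-1/6}(-\log\delta)^{1/3}$ for $d\geq3$ and $\|N^{\delta}_{1}\|_{L^{4/3}\to L^{4/3}}\lesssim\delta^{-1/4}(-\log\delta)^{1/4}$ for $d\geq4$; taking $\delta=2^{-j}$ and dominating the kernel $\psi_{2^{-j}}*\sigma$ by superposed annular averages yields the stated bounds. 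These Nikodym bounds are genuine combinatorial/incidence estimates about how $\delta$-spheres through nearby centres can overlap; they are not reachable by restriction estimates plus interpolation. Your final paragraph acknowledges that naive interpolation falls short, but attributing the fix to ``frequency localization and square-function packaging'' is not what happens in the reference --- the fix is geometric.
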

The $L^1$-estimate in the above, is a consequence of Lemma \ref{coveringlemma} and the kernel estimate 
$$|\psi_{2^{-j}}*d\sigma(x)|\lesssim\frac{2^j}{(1+2^j||x|-1|)^N}.$$ The $L^p$-estimates at the points 
$p=\frac{3}{2},\frac{4}{3}$ were obtained in \cite{NikodymSetsAndMaximalFunctionsAssociatedWithSpheres} 
by proving almost sharp estimates for the Nikodym maximal function $N^\delta$ defined in 
Section \ref{Sec:Nikodym}.

Next, we illustrate a bootstrap argument which allows us to extend the range of $p$ for 
which $L^p$-boundedness of the multiscale operator $M_j^T$ holds, provided we have an initial 
$L^p$-estimate for the single scale operator $A_{1,j}^T$.
\begin{lemma}\label{vector}
	Let $1\leq p_1,p_2\leq 2$ be such that 
	\begin{align*}
		\|A_{1,j}^T\|_{L^{p_1}\to L^{p_1}}&\leq C_1,\\
		\|M_j^T\|_{L^{p_2}\to L^{p_2}}&\leq C_2.
	\end{align*}
	Then, we have
	\[\|M_j^T\|_{L^{p}\to L^{p}}\lesssim C_1^\frac{p_1}{2} C_2^{1-\frac{p_1}{2}},\;\;\text{for}\;p=\frac{2p_2}{2+p_2-p_1}.\]
\end{lemma}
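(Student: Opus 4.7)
The plan is to prove the lemma by a square-function/Littlewood-Paley decomposition combined with Riesz-Thorin interpolation. The essential structural input is that the kernel $\psi_{2^{k-j}}*\sigma_{2^k}$ of $A_{2^k,j}^T$ has Fourier transform essentially supported on the annulus $|\xi|\sim 2^{j-k}$. Writing $\tilde P_{j-k}$ for a Littlewood-Paley projection onto this annulus, we have $A_{2^k,j}^T f = A_{2^k,j}^T\tilde P_{j-k}f$, and hence the pointwise domination
\[M_j^T f(x)\le\Bigl(\sum_k|A_{2^k,j}^T\tilde P_{j-k}f(x)|^2\Bigr)^{1/2}.\]

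To obtain an auxiliary $L^2$-estimate I would first interpolate the hypothesis with the trivial bound $\|A_{1,j}^T f\|_{L^\infty}\lesssim\|\psi_{2^{-j}}*\sigma\|_{L^1}\|f\|_{L^\infty}\lesssim\|f\|_{L^\infty}$ via Riesz-Thorin, producing $\|A_{1,j}^T f\|_{L^2}\lesssim C_1^{p_1/2}\|f\|_{L^2}$; by dilation invariance the same holds for every $A_{2^k,j}^T$ uniformly in $k$. Plancherel and the almost orthogonality of the $\tilde P_{j-k}$ then give $\|M_j^T f\|_{L^2}\lesssim C_1^{p_1/2}\|f\|_{L^2}$.

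The most delicate step is the matching $L^{p_1}$-estimate $\|M_j^T f\|_{L^{p_1}}\lesssim C_1\|f\|_{L^{p_1}}$. For this I would prove a vector-valued extension
\[\Bigl\|\bigl(\sum_k|A_{1,j}^T g_k|^2\bigr)^{1/2}\Bigr\|_{L^{p_1}}\lesssim C_1\Bigl\|\bigl(\sum_k|g_k|^2\bigr)^{1/2}\Bigr\|_{L^{p_1}}\]
of the scalar hypothesis, using that $A_{1,j}^T$ is the supremum of the $L^1$-normalized shifted convolutions $g\mapsto g*\psi_{2^{-j}}*\sigma(\cdot+u)$ indexed by $u\in T$. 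Applied with $g_k=\tilde P_{j-k}f$ (rescaled to unit scale) and combined with the Littlewood-Paley equivalence $\|f\|_{L^{p_1}}\sim\|(\sum_k|\tilde P_{j-k}f|^2)^{1/2}\|_{L^{p_1}}$, this yields the desired $L^{p_1}$-bound for $M_j^T$.

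Finally, I would linearize $M_j^T$ by choosing measurable selectors for the suprema in $k$ and $u$, and apply Riesz-Thorin to the resulting family of linear operators with endpoints $L^{p_1}\to L^{p_1}$ (norm $\lesssim C_1$) and $L^{p_2}\to L^{p_2}$ (norm $\le C_2$). Choosing the interpolation parameter $\theta=1-p_1/2\in[0,1/2]$ produces
\[\frac{1}{p}=\frac{1-\theta}{p_1}+\frac{\theta}{p_2}=\frac{1}{2}+\frac{1-p_1/2}{p_2}=\frac{2+p_2-p_1}{2p_2},\]
matching the stated exponent, and the interpolated operator norm is $C_1^{1-\theta}C_2^\theta=C_1^{p_1/2}C_2^{1-p_1/2}$. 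The main obstacle is the vector-valued step above, which does not follow formally from the scalar $L^{p_1}$-bound on the sublinear operator; its proof will require exploiting the explicit product structure of the kernel $\psi_{2^{-j}}*\sigma$ together with either a Khintchine randomization or a direct square-function argument adapted to the Littlewood-Paley decomposition.
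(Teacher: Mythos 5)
Your proposal correctly identifies the broad toolkit (Littlewood--Paley decomposition, $\ell^2$-valued bounds, interpolation) and the flag you raise at the end is exactly where the argument breaks: the square-function estimate
\[\Bigl\|\bigl(\textstyle\sum_k|A_{1,j}^T g_k|^2\bigr)^{1/2}\Bigr\|_{L^{p_1}}\lesssim C_1\Bigl\|\bigl(\textstyle\sum_k|g_k|^2\bigr)^{1/2}\Bigr\|_{L^{p_1}}\]
is not a consequence of the scalar hypothesis $\|A_{1,j}^T\|_{L^{p_1}\to L^{p_1}}\le C_1$, and there is no obvious way to prove it here. The operator $A_{1,j}^T$ is a supremum over $u\in T$ of convolutions against a signed kernel, so neither positivity nor Calder\'on--Zygmund vector-valued extension is available; indeed, producing this $L^{p_1}(\ell^2)$ bound with the sharp constant $C_1$ is at least as hard as the lemma itself. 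Once this step is missing, the rest of your chain collapses: you have no $L^{p_1}$ bound for $M_j^T$, and your preliminary $L^2$ bound for $M_j^T$ is never used in your final Riesz--Thorin step, so you are interpolating with a phantom endpoint.

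The paper's proof sidesteps exactly this obstacle by a different choice of sequence-space exponents. It introduces $\boldsymbol{\vec A}(\{f_k\})=\{A_{2^k,j}^T f_k\}$ and proves two endpoints that \emph{are} formal consequences of the hypotheses:
\[\|\boldsymbol{\vec A}\|_{L^{p_1}(\ell^{p_1})\to L^{p_1}(\ell^{p_1})}\le C_1,\qquad \|\boldsymbol{\vec A}\|_{L^{p_2}(\ell^{\infty})\to L^{p_2}(\ell^{\infty})}\le C_2.\]
The first is trivial (apply the scalar $L^{p_1}$ bound componentwise and sum the $p_1$-th powers); the second follows by dominating $\sup_k A_{2^k,j}^T f_k\le M_j^T(\sup_m|f_m|)$ and invoking the $L^{p_2}$ hypothesis on $M_j^T$. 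Vector-valued Riesz--Thorin then interpolates the inner index as well, yielding $\|\boldsymbol{\vec A}\|_{L^p(\ell^2)\to L^p(\ell^2)}\lesssim C_1^{p_1/2}C_2^{1-p_1/2}$ precisely when $1-\theta=p_1/2$, which forces $\ell^2$ on the target and $p=\frac{2p_2}{2+p_2-p_1}$. Applying this to $f_k=\tilde\psi_{2^{k-j}}*f$ and using Littlewood--Paley finishes. Notice the two things you miss: the $L^{p_1}$ endpoint is taken in $\ell^{p_1}$ (trivial), not $\ell^2$ (hard), and the $L^{p_2}$ endpoint uses the \emph{vector-valued} $\ell^\infty$ extension of the $M_j^T$ hypothesis rather than the scalar $M_j^T$ bound alone. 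Your proposal never exploits the $\ell^\infty$ slot at all, which is why it is forced to prove the unobtainable $L^{p_1}(\ell^2)$ estimate.
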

\begin{proof}
	The proof involves a vector-valued argument. Consider the following vector-valued operator 
	$\boldsymbol{\vec A}$ acting on a sequence of measurable functions $f=\{f_{k}\}_{k\in\Z}$.
	\[\boldsymbol{\vec A}(\{f_k\}_{k\in\Z})(x)=\{A_{2^k,j}^T(f_{k})(x)\}_{k\in\Z}.\]
	First, observe that the operator $A_{1,j}^T$ readily extends to a vector-valued setting, namely we get that 
	\begin{equation}\label{Lp}
		\|\boldsymbol{\vec A}\|_{L^{p_1}(\ell_{p_1})\to L^{p_1}(\ell_{p_1})}\leq C_1.
	\end{equation}
	Next, using the $L^{p_2}$-boundedness of $M_j^{T},$ we get that
	\begin{eqnarray*}
		\|\boldsymbol{\vec A}f\|_{L^{p_2}(\ell_{\infty})}&=&\|\sup_k A^{T}_{2^k,j}(f_{k})\|_{{p_2}}\\
		&\leq& \|M_j^{T}(\sup_{m}|f_{m}|)\|_{{p_2}}\\
		&\leq &C_2\|f\|_{L^{p_2}(\ell_{\infty})}.
	\end{eqnarray*}
	Therefore, we have that 
	\begin{equation}\label{Lq}
		\|\boldsymbol{\vec A}\|_{L^{p_2}(\ell_{\infty})\to L^{p_2}(\ell_{\infty})}\leq C_2.
	\end{equation}
	Interpolate between \eqref{Lp} and \eqref{Lq} to deduce that 
	\begin{equation}\label{interpo}
		\|\boldsymbol{\vec A}\|_{L^{p}(\ell_{2})\to L^{p}(\ell_{2})}\lesssim C_1^\frac{p_1}{2} C_2^{1-\frac{p_1}{2}}.
	\end{equation}
	This estimate can be used to get that 
	\begin{eqnarray*}
		\|M^{T}_{j}f\|_{p}&\leq& \left\| \left(\sum_{k\in\mathbb{Z}}|A^{T}_{2^k,j}f|^2\right)^{\frac{1}{2}}\right\|_{p}\\
		&\lesssim& C_1^\frac{p_1}{2} C_2^{1-\frac{p_1}{2}}\left\| \left(\sum_{k\in\mathbb{Z}}|\psi_{2^{k-j}}*f|^2\right)^{\frac{1}{2}}\right\|_{p}\\
		&\lesssim&C_1^\frac{p_1}{2} C_2^{1-\frac{p_1}{2}} \|f\|_{p},
	\end{eqnarray*}
	where in the last step, we have used the Littlewood-Paley inequality (see for example, \cite{Duoandikoetxeabook}).
\end{proof}

We now state the main estimates for the operators $M_j^T$ needed to prove Theorem \ref{NT}.
\begin{lemma}Let $j\in\N$ and $0\leq s<d-1$. We have the following estimates,\label{Mjestimates}
	\begin{enumerate}
		\item For $d\geq2$,
		\begin{equation}\label{l2estimate}
			\Vert M_{j}^{T}f\Vert_{2}\lesssim 2^{-\frac{(d-1-s)j}{2}}\Vert f\Vert_{2}.
		\end{equation}
		\item For $d\geq 2,\;p_0=1+\frac{s}{2(d-1)}$,
		\begin{equation}\label{Ms/2}
			\|M_j^T\|_{L^{p_0}(\R^d)\to L^{p_0}(\R^d)}\lesssim 2^{j\frac{s(d-1)}{2(d-1)+s}}.
		\end{equation}
		\item For $d=3$, $\frac{3}{2}<p\leq2$,
		\begin{equation}\label{M3/2}
			\|M_j^T\|_{L^p(\R^3)\to L^p(\R^3)}\lesssim j^{\frac{1}{p}-\frac{1}{2}}2^{-j\left(\frac{9-4s}{2}-\frac{7-3s}{p}\right)}.
		\end{equation}
		\item For $d\geq4,\;\frac{4}{3}<p\leq2$,
		\begin{equation}\label{M4/3}
			\|M_j^T\|_{L^p(\R^d)\to L^p(\R^d)}\lesssim j^{\frac{1}{p}-\frac{1}{2}}2^{-j\left(\frac{3d-2-3s}{2}-\frac{2d-1-2s}{p}\right)}.
		\end{equation}
	\end{enumerate}
\end{lemma}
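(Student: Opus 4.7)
The plan is to combine a Plancherel-based $L^2$ analysis with the vector-valued interpolation Lemma~\ref{vector} and the single-scale estimates~\eqref{L1-L1},~\eqref{L3/2-L3/2},~\eqref{L4/3-L4/3}. I will first establish~\eqref{l2estimate} by a lacunary Littlewood--Paley argument: the covering Lemma~\ref{coveringlemma}, rescaled to unit radius, together with the Minkowski-content count $\#T_j\lesssim 2^{js}$, reduces the supremum over $u\in T$ to a sum of at most $2^{js}$ shifted copies. Combining this with Plancherel and the decay $|\widehat{\sigma_{2^k}}(\xi)|\lesssim(2^k|\xi|)^{-(d-1)/2}$ on the annulus $|2^{k-j}\xi|\sim 1$ yields the single-scale bound
\[
\|A_{2^k,j}^T f\|_2^2\lesssim 2^{-j(d-1-s)}\,\|f*\psi_{2^{k-j}}\|_2^2,
\]
and the passage to the lacunary supremum via $\sup_k|\cdot|\le(\sum_k|\cdot|^2)^{1/2}$ combined with the almost-disjoint Littlewood--Paley supports concludes~\eqref{l2estimate}.

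For~\eqref{Ms/2}, I will apply Lemma~\ref{vector} with $p_2=2$ and the $L^2$ input $C_2=2^{-j(d-1-s)/2}$ from~\eqref{l2estimate}. Riesz--Thorin interpolation of~\eqref{L1-L1} against the $L^2$ bound (which follows by the argument above applied to a single scale) gives
\[
\|A_{1,j}^T\|_{L^{p_1}\to L^{p_1}}\lesssim 2^{j\left[(s+d-1)/p_1-(d-1)\right]}\quad\text{for } p_1\in[1,2].
\]
Choosing $p_1=4s/(2(d-1)+s)$ makes the Lemma~\ref{vector} output $p=2p_2/(2+p_2-p_1)$ equal to $p_0$, and a direct algebraic simplification shows that $C_1^{p_1/2}C_2^{1-p_1/2}$ reduces precisely to $2^{js(d-1)/(2(d-1)+s)}$.

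For~\eqref{M3/2} and~\eqref{M4/3}, I will apply Lemma~\ref{vector} with $(p_1,p_2)=(3/2,2)$ when $d=3$ and $(p_1,p_2)=(4/3,2)$ when $d\ge 4$, feeding in the single-scale estimates~\eqref{L3/2-L3/2} and~\eqref{L4/3-L4/3} as $C_1$ and~\eqref{l2estimate} as $C_2$. This produces an $M_j^T$ estimate at one interior point of each target interval ($p=8/5$ for $d=3$ and $p=3/2$ for $d\ge 4$), and real interpolation with~\eqref{l2estimate} sweeps out the full ranges $(3/2,2]$ and $(4/3,2]$; the $j^{1/p-1/2}$ prefactor emerges from tracking the interpolation parameter $\theta$ through the Marcinkiewicz theorem.

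The principal technical obstacle is the admissibility condition $p_1\in[1,2]$ in Lemma~\ref{vector}: in~\eqref{Ms/2} the optimal choice $p_1=4s/(2(d-1)+s)$ falls below $1$ whenever $s<2(d-1)/3$. In that regime I plan to iterate Lemma~\ref{vector}, starting from $p_1=1,\,p_2=2$ to obtain an $L^{4/3}$ bound for $M_j^T$, then reusing that bound as the new $C_2$ at $p_2=4/3$ in a second application, and so on until the output exponent matches $p_0$. The arithmetic has to be tracked carefully across iterations to ensure that the successive bounds really do preserve the sharp exponent $js(d-1)/(2(d-1)+s)$.
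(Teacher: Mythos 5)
Your treatment of part (1) and your direct choice of $p_1$ for part (2) both track the paper's argument; in particular, the paper itself handles part (2) by iterating Lemma~\ref{vector} with $p_1=1$ (obtaining first an $L^{4/3}$ bound and then recursing), so your fallback plan for $s<2(d-1)/3$ is exactly what is done there. The $j$-power you quote from the statement need not be reproduced exactly (it is inconsequential for Theorem~\ref{NT} since any polynomial in $j$ is absorbed by the geometric decay in $j$), so do not lose time over the Marcinkiewicz bookkeeping.

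The genuine gap is in parts (3) and (4). A single application of Lemma~\ref{vector} with $(p_1,p_2)=(3/2,2)$ produces a bound at $p=8/5$ only, and real interpolation between $p=8/5$ and $p=2$ covers $[8/5,2]$, \emph{not} the claimed range $(3/2,2]$; the sub-interval $(3/2,8/5)$ is simply not reached. The same defect occurs for $d\ge4$: one application of $(p_1,p_2)=(4/3,2)$ lands at $p=3/2$, and interpolating with $L^2$ gives $[3/2,2]$, missing $(4/3,3/2)$. The fix is the same iteration you already proposed for part (2): with $p_1$ frozen at $3/2$ (resp.\ $4/3$), feed the output of each application of Lemma~\ref{vector} back in as the new $p_2$ and $C_2$. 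The resulting exponents $p_n$ decrease monotonically to $3/2$ (resp.\ $4/3$), and since the constants $\log_2 C$ lie on a single affine function of $1/p$ (the single-scale line through $(1/2,\,-j(d-1-s)/2)$ and $(1/p_1,\,\log_2\|A^T_{1,j}\|_{p_1\to p_1})$, which Lemma~\ref{vector} preserves), interpolating between consecutive $p_{n}$'s recovers the stated linear-in-$1/p$ exponent on all of $(3/2,2]$ (resp.\ $(4/3,2]$). That iterated application is precisely what the paper means by ``repeated application of Lemma~\ref{vector}'' in the proofs of (3) and (4), and it is the step your proposal omits.
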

\begin{proof}
	\textbf{Proof of (1):} By an application of Lemma \ref{coveringlemma}, we get that 
	\begin{eqnarray*}
		\|A^{T}_{1,j}f\|_2&=&\|\sup_{u\in T}|f\ast\psi_{2^{-j}}\ast\sigma(\cdot+u)|\|_{2}\\
		&\lesssim& 2^{\frac{js}{2}}\|f\ast\psi_{2^{-j}}\ast\sigma\|_{2}.
	\end{eqnarray*}
	We use Plancherel's  identity and standard scaling argument to get that  
	\begin{align}
		\Vert A^{T}_{2^k,j}f\Vert_{2}\lesssim 2^{-\frac{(d-1-s)j}{2}}\Vert f\Vert_{2}.
	\end{align}
	Let $\tilde{\psi}\in \mathcal S(\R^d)$ be such that $\widehat{\tilde{\psi}}(\xi)=1$ on the support of $\hat{\psi}$. Since, $\widehat{\tilde{\psi}}_{2^k}\widehat{{\psi}}_{2^k}=\widehat{{\psi}}_{2^k}$ for all $k,$ we can use the orthogonality of Fourier transform in the following way to get the desired result.  
	\begin{eqnarray*}
		\Vert M^{T}_{j}f\Vert^{2}_{2}&\leq& \left\| \left(\sum_{k\in\mathbb{Z}}|A^{T}_{2^k,j}f|^2\right)^\frac{1}{2}\right\|^{2}_{2}= \left\|\left( \sum_{k\in\mathbb{Z}}|A^{T}_{2^k,j}(\tilde{\psi}_{2^{k-j}}*f)|^2\right)^\frac{1}{2}\right\|^{2}_{2}\\
		&\lesssim& 2^{-(d-1-s)j}\sum_{k}\Vert \tilde{\psi}_{2^{k-j}}*f\Vert^{2}_{2}\\
		&\lesssim & 2^{-(d-1-s)j}\Vert f\Vert^{2}_{2}.
	\end{eqnarray*}
	\textbf{Proof of (2):} By an application of Lemma \ref{vector} along with the estimates \eqref{L1-L1} and \ref{l2estimate}, we have that
	\[\|M_j^T\|_{L^\frac{4}{3}\to L^\frac{4}{3}}\lesssim 2^{-j\frac{(d-1-3s)}{4}}.\]
	By a recursive application of Lemma \ref{vector} along with \eqref{L1-L1} and the above estimate, we obtain 
	the estimate \ref{Ms/2}.\\
	\textbf{Proof of (3):} The inequality \ref{M3/2} follows from repeated application of Lemma \ref{vector} 
	along with the estimates \eqref{L3/2-L3/2} and \ref{l2estimate}.\\
	\textbf{Proof of (4):} The proof is similar to that of inequality \ref{M3/2} with the exception of using 
	estimate \ref{L4/3-L4/3} instead of \ref{L3/2-L3/2}.
\end{proof}
To obtain a restricted weak type inequality, we will employ an interpolation argument
due to Bourgain. 
We state the lemma for convenience. The interested reader is referred 
to \cite{Lee1} (Lemma 2.6) for details. 
\begin{lemma}[\cite{Lee1}]\label{Bourgain}
	Let $\epsilon_1,\epsilon_2>0$. Suppose that $\{T_j\}$ is a sequence of linear (or sublinear) operators 
	such that for some $1\leq p_1,p_2<\infty$, and $1\leq q_1,q_2<\infty$, 
	$$\Vert T_{j}(f)\Vert_{L^{q_1}}\leq M_12^{\epsilon_1 j}\Vert f\Vert_{L^{p_1}},~~\Vert T_{j}(f)\Vert_{L^{q_2}}\leq M_22^{-\epsilon_2 j}\Vert f\Vert_{L^{p_2}}.$$
	Then $T=\sum_jT_j$ is bounded from $L^{p,1}$ to $L^{q,\infty}$, i.e. 
	$$\Vert T(f)\Vert_{L^{q,\infty}}\lesssim M^{\theta}_{1}M^{1-\theta}_{2}\Vert f\Vert_{L^{p,1}},$$	
	where $\theta=\frac{\epsilon_2}{\epsilon_1+\epsilon_2}$, $\frac{1}{q}=\frac{\theta}{q_1}+\frac{1-\theta}{q_2}$ 
	and $\frac{1}{p}=\frac{\theta}{p_1}+\frac{1-\theta}{p_2}$.
\end{lemma}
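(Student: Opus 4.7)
The plan is to apply Bourgain's standard splitting/balancing interpolation trick. First, I would reduce to a restricted weak-type estimate on characteristic functions: by the equivalence of restricted weak-type $(p,q)$ with $L^{p,1}\to L^{q,\infty}$ boundedness (via the atomic decomposition of $L^{p,1}$), it suffices to prove
\begin{equation*}
\lambda\,|\{x:|Tf(x)|>\lambda\}|^{1/q}\lesssim M_1^{\theta}M_2^{1-\theta}|E|^{1/p}
\end{equation*}
for all $\lambda>0$ and all $f=\chi_E$ with $|E|<\infty$.

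Next, I would fix such $\lambda$ and $E$ and let $J\in\Z$ be a parameter chosen later. The idea is to split $Tf=\sum_{j<J}T_jf+\sum_{j\geq J}T_jf$ and use
\begin{equation*}
|\{|Tf|>2\lambda\}|\leq \left|\left\{\Big|\sum_{j<J}T_jf\Big|>\lambda\right\}\right|+\left|\left\{\Big|\sum_{j\geq J}T_jf\Big|>\lambda\right\}\right|.
\end{equation*}
On the first piece I would apply Chebyshev's inequality at the $L^{q_1}$ level: combining the triangle inequality in $L^{q_1}$, the hypothesis $\|T_jf\|_{L^{q_1}}\leq M_12^{\epsilon_1 j}|E|^{1/p_1}$, and the geometric sum $\sum_{j<J}2^{\epsilon_1 j}\lesssim 2^{\epsilon_1 J}$ should give
\begin{equation*}
\left|\left\{\Big|\sum_{j<J}T_jf\Big|>\lambda\right\}\right|\lesssim \lambda^{-q_1}M_1^{q_1}2^{\epsilon_1 q_1 J}|E|^{q_1/p_1}.
\end{equation*}
Chebyshev at the $L^{q_2}$ level, using the decaying hypothesis and $\sum_{j\geq J}2^{-\epsilon_2 j}\lesssim 2^{-\epsilon_2 J}$, gives the symmetric bound
\begin{equation*}
\left|\left\{\Big|\sum_{j\geq J}T_jf\Big|>\lambda\right\}\right|\lesssim \lambda^{-q_2}M_2^{q_2}2^{-\epsilon_2 q_2 J}|E|^{q_2/p_2}.
\end{equation*}

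Then I would optimize $J$. Writing the two bounds as $A\cdot 2^{\alpha J}$ and $B\cdot 2^{-\beta J}$ with $\alpha=\epsilon_1 q_1$, $\beta=\epsilon_2 q_2$, I would take $J$ to be the nearest integer to the real solution of $A\cdot 2^{\alpha J}=B\cdot 2^{-\beta J}$, so that both terms are comparable to $A^{\beta/(\alpha+\beta)}B^{\alpha/(\alpha+\beta)}$. Plugging in the relations $\theta=\epsilon_2/(\epsilon_1+\epsilon_2)$, $1/q=\theta/q_1+(1-\theta)/q_2$, and $1/p=\theta/p_1+(1-\theta)/p_2$ should collapse this to $\lambda^{-q}(M_1^{\theta}M_2^{1-\theta})^{q}|E|^{q/p}$; taking $q$-th roots then yields the desired restricted weak-type estimate.

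The hard part is not any single estimate but the algebraic bookkeeping needed to match the interpolation exponents in the last step; I would carry that out carefully. A minor technicality is that the optimal $J$ need not be an integer, but rounding to the nearest integer only costs a multiplicative constant; in the extreme regimes where the optimal real $J$ lies outside any reasonable range, I would simply use one of the two hypotheses alone, which in that regime already dominates the interpolated bound.
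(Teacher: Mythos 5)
Your proof is correct and is the standard Bourgain interpolation argument; the paper does not prove this lemma but cites \cite{Lee1} (Lemma 2.6), whose proof is exactly the high/low split and two-level Chebyshev optimization you describe. The algebraic bookkeeping does close: setting $\gamma=\epsilon_2 q_2/(\epsilon_1 q_1+\epsilon_2 q_2)$ one verifies $\gamma=\theta q/q_1$ and $1-\gamma=(1-\theta)q/q_2$, so the balanced bound $A^{\gamma}B^{1-\gamma}$ collapses to $\lambda^{-q}\big(M_1^{\theta}M_2^{1-\theta}\big)^{q}|E|^{q/p}$ as needed, and your remarks on integer rounding and boundary values of $J$ are the right way to tidy up the remaining constants.
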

Since we have all the ingredients, we now conclude the proof of Theorem \ref{NT}.
\begin{proof}[Proof of Theorem \ref{NT}]
	First, we prove the restricted weak type inequality at the endpoint $p_0=1+\frac{s}{d-1}$.
	
	The restricted weak type inequality $N_{lac}^T:L^{1+\frac{s}{d-1},1}\to L^{1+\frac{s}{d-1},\infty}$ follows 
	from the estimates \ref{l2estimate}, \eqref{Ms/2} along with an application of Lemma \ref{Bourgain} for 
	the operators $M_j^T$. 
	
	The proof of restricted weak type estimate for the endpoint $p=1+\frac{1}{d-s}$ is simpler. Indeed, it 
	follows by applying Lemma \ref{Bourgain} to the endpoint estimates \ref{l2estimate} and $\|M_j^T\|_{L^1\to L^1}\lesssim 2^j$.
	
	The $L^p$-estimates for $N^T_{lac}$ for the range 
	$p>1+\min\{\frac{s}{d-1},\frac{1}{d-s}\}$ follow 
	from the interpolation of respective restricted weak type inequalities and the trivial $L^\infty$-estimate. 
	Finally, we obtain the better $L^p$-bounds for the case $p>1+\frac{5-2s}{9-4s}$ and $p>1+\frac{d-s}{3(d-s)-2}$ 
	in dimensions $d=3$ and $d\geq4$ respectively by resorting to inequalities \ref{M3/2} and \ref{M4/3} for 
	$M_j^T$ and summing in $j$.
\end{proof}
\section{Proof of Theorem \ref{Lp improving}: $L^p$-improving properties of $A^T$}\label{Sec:ProofLp}
In view of the real interpolation theory, it is enough to establish restricted 
weak-type estimates for the operator 
$A^T$ at the endpoints described in Theorem~\ref{Lp improving}. 
This is obtained by decomposing the operator 
$A^T$ at dyadic scales and by performing a discretization of the set $T$ adapted 
to each dyadic scale. We will prove 
suitable $L^p$-estimates for each piece of $A^T$. Finally, using the 
interpolation theorem due to Bourgain we get the desired restricted weak-type estimates. 

Let $\phi$ and $\psi$ be as in~\ref{identity}. This gives us the following decomposition,
\[A^{T}_{1}f(x)\leq\sum\limits_{j=0}^\infty A^{T}_{1,j}f(x).\]
Next, consider the covering of $T$ by balls of radius $\delta=2^{-j}.$ Invoking the covering argument 
from Lemma \ref{coveringlemma}, we get that 
$$\Vert A^{T}_{1,j}|\Vert^{p}_{L^p}\lesssim N(T,2^{-j}) \Vert f\ast\psi_{2^{-j}}\ast\sigma\Vert^p_{L^{p}},~p\geq 1.$$
Recall that we have the bound $N(T,2^{-j})\leq 2^{js}.$ So by using the Fourier transform estimate 
\begin{align}
	\|A^{T}_{1,j}\|_{L^2\to L^2}&\lesssim 2^{-j\left(\frac{d-s-1}{2}\right)}. \label{L2-L2}
\end{align}
Moreover, using the $L^p$-improving estimates of $A^{u}_1$ \cite{Littman} (for a fixed $u\in T$) and applying the covering argument we get
\begin{equation}\label{Lp-Lq}
	\|A^{T}_{1,j}\|_{L^{\frac{d+1}{d}}\to L^{d+1}}\lesssim 2^{j\left(\frac{s}{d+1}\right)}.
\end{equation}

Now using the estimates \eqref{L2-L2}, \eqref{L1-L1}, and \eqref{Lp-Lq}, and Lemma \eqref{Bourgain} we get 

\[\|A^{T}_{1}f\|_{L^{q,\infty}}\lesssim \|f\|_{L^{p,1}},\]
where $(1/p,1/q)= H$ and $E$. This completes the proof. \qed

\subsection{Necessary conditions}\label{Sec:nec}
In this section, we construct an example to obtain sharpness of the restricted weak type inequality $A_1^T:L^{1+\frac{s}{d-1},r}\to L^{1+\frac{s}{d-1},\infty}$. The example is a modification of the arguments given in \cite{EndpointMappingPropertiesOfSphericalMaximalOperators}, where they showed the sharpness in the particular case when $T$ is of Minkowski dimension $\frac{1}{2}$ in dimension $d=2$.

We consider two self-similar sets of appropriate Minkowski content whose sum is an interval. We say $C$ is a Cantor set of ratio $\lambda$ if $C$ is self-similar with respect to the similitudes $S_1(x)=\lambda x$ and $S_2(x)=\lambda x-(1-\lambda)$ i.e.,  $C=S_1(C)\cup S_2(C)$. It is well-known that the Minkowski dimension of $C$ is $\frac{\log 2}{\log \frac{1}{\lambda}}$, see Section 2.2 of Chapter 7 in \cite{SSbook}. The following lemma justifies the existence of two Cantor sets whose sum is an interval.
\begin{lemma}\label{sum}
	Let $C_1$ and $C_2$ be two Cantor sets with ratios $\lambda_1$ and $\lambda_2$ respectively such that $\frac{\lambda_1}{1-2\lambda_1}\frac{\lambda_2}{1-2\lambda_2}\geq 1$. Then $C_1+C_2$ is a closed interval. In fact, we have $C_1+C_2=[0,2]$.
\end{lemma}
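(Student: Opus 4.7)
My plan is to recognize the statement as an instance of Newhouse's gap lemma, which asserts that if two compact Cantor sets $K_1,K_2\subset\R$ satisfy $\tau(K_1)\tau(K_2)\geq 1$ and neither is contained in a single gap of the other, then $K_1\cap K_2\neq\emptyset$.

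First I would verify that the Newhouse thickness of $C_i$ equals $\tau(C_i)=\lambda_i/(1-2\lambda_i)$. At the $n$-th construction stage $C_i^{(n)}$ is a disjoint union of $2^n$ closed intervals (bridges) of length $\lambda_i^n$, and the newly created gaps at stage $n$ have length $\lambda_i^{n-1}(1-2\lambda_i)$. The ratio of a bridge to an adjacent gap is therefore $\lambda_i^n/[\lambda_i^{n-1}(1-2\lambda_i)]=\lambda_i/(1-2\lambda_i)$ at every scale, giving the stated thickness. Hence the hypothesis of the gap lemma is precisely $\tau(C_1)\tau(C_2)\geq 1$.

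Second I would convert the sumset statement into an intersection statement: $y\in C_1+C_2$ iff $C_1\cap(y-C_2)\neq\emptyset$. The reflected set $y-C_2$ is the image of $C_2$ under an orientation-reversing similarity, so it is a Cantor set with the same thickness $\tau(C_2)$. For $y\in(0,2)$ the convex hulls $[0,1]$ and $[y-1,y]$ overlap, and neither convex hull can be contained in a gap of the other, since every gap has length at most $1-2\lambda_i<1$ whereas each convex hull has length $1$. This is exactly the linkedness hypothesis of the gap lemma, which then delivers $C_1\cap(y-C_2)\neq\emptyset$, i.e.\ $y\in C_1+C_2$. The endpoints $y=0$ and $y=2$ are witnessed by $(0,0)$ and $(1,1)$, and the opposite inclusion $C_1+C_2\subset[0,2]$ is immediate from $C_i\subset[0,1]$.

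The main obstacle, if one prefers a self-contained argument over citing the gap lemma, is to execute its inductive core in this setting. Concretely, given $y\in[0,2]$ one selects nested bridges $I_1^{(n)}\subset C_1^{(n)}$ and $I_2^{(n)}\subset C_2^{(n)}$ with $y\in I_1^{(n)}+I_2^{(n)}$ at every stage. There are four refinements to the next stage (left/right sub-bridge of each Cantor set), giving four Minkowski-sum intervals each of length $\lambda_1^{n+1}+\lambda_2^{n+1}$; a direct computation shows that the inequality $\tfrac{\lambda_1}{1-2\lambda_1}\tfrac{\lambda_2}{1-2\lambda_2}\geq 1$ is exactly the condition under which these four translates cover the parent interval $I_1^{(n)}+I_2^{(n)}$, so at each stage at least one refinement still contains $y$. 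Taking the limit of the nested bridges produces $x_i\in C_i$ with $x_1+x_2=y$, completing the proof.
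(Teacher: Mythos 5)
The paper does not prove Lemma \ref{sum}; it simply cites Takahashi's Theorem 1.1, which is a thickness-based sumset theorem for Cantor sets. Your first approach---recognizing $\lambda_i/(1-2\lambda_i)$ as the Newhouse thickness $\tau(C_i)$, converting $y\in C_1+C_2$ into $C_1\cap(y-C_2)\neq\emptyset$, and applying the gap lemma---is correct and is morally the same move the paper makes: both reduce the lemma to a known result of Hall/Newhouse/Takahashi type. Your thickness computation is also correct.

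However, your self-contained inductive sketch contains a genuine error. You claim that when $I_1^{(n)},I_2^{(n)}$ are bridges of lengths $\lambda_1^n,\lambda_2^n$, the four Minkowski sums obtained by refining \emph{both} bridges simultaneously cover $I_1^{(n)}+I_2^{(n)}$, and that the hypothesis $\frac{\lambda_1}{1-2\lambda_1}\frac{\lambda_2}{1-2\lambda_2}\geq1$ is exactly the condition for this. That is false: whether the four translates cover depends on the ratio $\lambda_1^n/\lambda_2^n$, which is not controlled when $\lambda_1\neq\lambda_2$. Concretely, write $\mu_i=\lambda_i^n$; normalizing so the parent is $[0,\mu_1+\mu_2]$, the left-left and left-right sums extend to $\lambda_1\mu_1+\lambda_2\mu_2$ and $\lambda_1\mu_1+\mu_2$ respectively, while the right-left and right-right sums start at $(1-\lambda_1)\mu_1$ and later. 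If $\mu_2/\mu_1<1-2\lambda_1$, there is a gap between $\lambda_1\mu_1+\mu_2$ and $(1-\lambda_1)\mu_1$ that none of the four translates crosses. For example, take $\lambda_1=0.4$, $\lambda_2=0.3$ (thickness product $2\cdot0.75=1.5\geq1$); at stage $n=6$ one has $\mu_2/\mu_1=(0.75)^6\approx0.178<0.2=1-2\lambda_1$, and a direct check shows the four translates leave $[0.0023674,\,0.0024576]$ uncovered. The standard inductive proof avoids this by refining only \emph{one} of the two intervals at each step---typically the longer bridge---so that the ratio of the two bridge lengths stays bounded; refining both in lockstep does not work. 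Either cite the gap lemma (or Takahashi, or Hall's original theorem) as the paper does, or repair the induction along the single-refinement line.
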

We refer the interested reader to [Theorem 1.1, \cite{Takahashi}] for more details. 
\begin{prop}
	For $r>1$, the operator $A_1^T$ does not map $L^{1+\frac{s}{d-1},r}$ to  $L^{1+\frac{s}{d-1},\infty}$.
\end{prop}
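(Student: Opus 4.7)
The aim is to prove a mismatch of the form
\[
\|A_1^Tf_N\|_{L^{p,\infty}}\gtrsim N \quad\text{while}\quad \|f_N\|_{L^{p,r}}\sim N^{1/r},
\]
for a well-chosen set $T$ and a sequence of test functions $(f_N)_{N\geq 1}$, at the critical exponent $p=1+\frac{s}{d-1}$. Since this would force $\|A_1^T\|_{L^{p,r}\to L^{p,\infty}}\gtrsim N^{1-1/r}$, unbounded for $r>1$, no such operator bound can hold.

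The set $T$ is built from the Cantor machinery of Lemma~\ref{sum}. Pick ratios $\lambda_1,\lambda_2$ so that the associated self-similar Cantor sets $C_s,\tilde C\subset[0,1]$ satisfy $\dim_M C_s=s$ together with the sum identity $C_s+\tilde C=[0,2]$, and set $T:=C_s\times\{0\}^{d-1}\subset\R^d$, which then has finite $s$-dimensional upper Minkowski content. The role of the companion $\tilde C$ is geometric: it guarantees that the family $\{S^{d-1}-u\}_{u\in T}$ fills out an interval's worth of translates in the first coordinate, which is the amount of overlap we will need when pooling contributions from different scales. For the test functions I would take
\[
f_N=\sum_{k=1}^N 2^{k/p}\chi_{E_k},
\]
with each $E_k$ a set of Lebesgue measure $\sim 2^{-k}$ localized at a scale $\delta_k=2^{-k/d}$ and placed on the sphere according to the multiscale positions dictated by $\tilde C$. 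A routine rearrangement computation, using $f_N^*(t)\sim t^{-1/p}$ on $[2^{-N},1]$, yields $\|f_N\|_{L^{p,r}}\sim N^{1/r}$ for every $1\leq r<\infty$ (in particular $\|f_N\|_{L^{p,1}}\sim N$).

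The heart of the argument is the lower bound $\|A_1^Tf_N\|_{L^{p,\infty}}\gtrsim N$. For $x$ in a carefully identified witness region $F_N$, one uses the sublinear inequality $A_1^Tf_N(x)\geq A_1^{u}f_N(x)=\sum_k 2^{k/p}A_1^{u}\chi_{E_k}(x)$ for \emph{any} $u\in T$, and exploits the multiscale Cantor structure to choose, scale by scale, a different $u=u_k(x)\in T$ for which $|x+u_k|\approx 1$ in such a way that the sphere of radius one centered at $x+u_k$ intersects $E_k$ on a cap of the full $\delta_k$-surface measure, giving $A_1^{u_k}\chi_{E_k}(x)\gtrsim\delta_k^{d-1}$. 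Taking the supremum inside the sum for each scale (via the definition of $A^T$) and placing the $E_k$'s coherently using $C_s+\tilde C=[0,2]$ produces the cumulative bound $A_1^Tf_N(x)\gtrsim N^{1/p}/|F_N|^{1/p}$ on $F_N$, which translates to the weak-type estimate $\|A_1^Tf_N\|_{L^{p,\infty}}\gtrsim N$.

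The main obstacle is precisely the construction of the family $\{E_k\}$ and the witness set $F_N$: a naive arrangement with the $E_k$'s as concentric balls forces the contributions from different scales to share only the very small common region $N_{\delta_N}(S^{d-1}-T)$ of measure $\delta_N^{1-s}$, which balances exactly to $\|A_1^Tf_N\|_{L^{p,\infty}}\sim 1$ and fails to give the needed growth. To bypass this, one uses Lemma~\ref{sum} at every dyadic scale: because $C_s+\tilde C=[0,2]$, the $E_k$'s can be spread on the sphere so that at each scale $k$ a \emph{different} witness $u_k\in T$ activates the full $\delta_k$-sized contribution, while the positions remain concentrated enough that the resulting $F_N$ has measure bounded below uniformly in $N$.
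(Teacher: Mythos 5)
Your overall plan (build $f_N$ of $N$ nested layers with $\|f_N\|_{L^{p,r}}\sim N^{1/r}$, show $A_1^Tf_N\gtrsim N$ on a witness set of fixed positive measure, and conclude $\|A_1^T\|_{L^{p,r}\to L^{p,\infty}}\gtrsim N^{1-1/r}\to\infty$) is the same as the paper's, and your Cantor-set setup and the Lorentz norm estimate are consistent with it. But the crucial lower-bound step contains a genuine error.

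You propose to lower-bound $A_1^Tf_N(x)$ by choosing, scale by scale, a different $u_k(x)\in T$ and ``taking the supremum inside the sum for each scale.'' For nonnegative $f_N$ this gives
\[
\sum_{k=1}^N 2^{k/p}\,A_1^{u_k(x)}\chi_{E_k}(x)\;\leq\;\sum_{k=1}^N 2^{k/p}\sup_{u\in T}A_1^{u}\chi_{E_k}(x)\;\geq\;\sup_{u\in T}\sum_{k=1}^N 2^{k/p}A_1^{u}\chi_{E_k}(x)\;=\;A_1^Tf_N(x),
\]
so the quantity you control is an \emph{upper} bound for $A_1^Tf_N(x)$, not a lower bound. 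A supremum over $u$ cannot be distributed over the sum in the direction you need: to bound $A_1^Tf_N(x)$ from below one must exhibit a \emph{single} $u=u(x)$ for which all $N$ scale contributions are simultaneously of order one.

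The paper's construction achieves precisely this. It takes $T=\{ce_1:c\in C_1\}$ with $C_1$ of Minkowski dimension $s$, and a companion Cantor set $C_2\subset[0,1]$ of Minkowski dimension $1-s$, chosen via Lemma~\ref{sum} so that $C_2-C_1=[-1,1]$. The test function is
\[
f=\sum_{i=1}^{N}4^{(d-1)i}\,\chi_{\{ce_1:\,c\in C_2\}+B(0,a4^{-i})},
\]
so the $N$ layers are nested balls concentric about \emph{every} point of $\{ce_1:c\in C_2\}$, not different sets at different locations. For $x\in[-1,1]e_1+\mathbb S^{d-1}$ (a set of positive measure independent of $N$), write $x=(c_2-c_1)e_1+y$ with $c_j\in C_j$, $y\in\mathbb S^{d-1}$, and take $u(x)=c_1e_1\in T$. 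Then $x+u(x)-y=c_2e_1=:z$, so the sphere $\mathbb S^{d-1}(x+u(x))$ passes through $z$. Since each layer contains $B(z,a4^{-i})$, the spherical cap inside $B(z,a4^{-i})$ has surface measure $\sim4^{-i(d-1)}$, which cancels the weight $4^{(d-1)i}$; the \emph{same} $u(x)$ therefore gives a unit contribution at every scale, and $A_1^{u(x)}f(x)\gtrsim N$, hence $A_1^Tf(x)\gtrsim N$. The choice $\dim_M C_2=1-s$ is also what makes each layer's support have measure $\sim4^{-i(d-1+s)}$, giving $t\,d_f(t)^{1/p}\lesssim1$ and $\|f\|_{L^{p,r}}\lesssim N^{1/r}$.

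You correctly identified that a single stack of concentric balls collapses the witness set to measure $\delta_N^{1-s}\to0$, and you correctly guessed that the interval-sum property of the two Cantor sets is what repairs this. The missing idea is that one must spread the bumps over the \emph{entire} companion set $\{ce_1:c\in C_2\}$ at every scale, so that for each $x$ in the large witness set a single $u(x)$ sends the sphere through one point $z$ that is a common center of all $N$ nested layers. ``Different $u$ per scale'' cannot be made rigorous and is the step where your argument breaks.
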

\begin{proof}
	Let $C_1,C_2\subset[0,1]$ be Cantor sets of ratios $2^{-\frac{1}{s}}$ and $2^{-\frac{1}{1-s}}$ respectively. Then from Lemma \ref{sum} we can see that $[-1,1]= C_2-C_1=\{c_2-c_1:c_1\in C_1,c_2\in C_2\}$. For a small $a>0$, define 
	\[f(x)=\sum_{i=1}^{N}4^{(d-1)i}\chi_{\{ce_1:c\in C_2\}+B(0,a4^{-i})}(x).\]
	For $r>1$, we have the following bound on Lorentz space norm of $f$,
	\[\|f\|_{L^{1+\frac{s}{d-1},r}}\lesssim N^{\frac{1}{r}}.\]
	To see this, consider the sets $F_j=\bigg(4^{(d-1)j}\sum\limits_{k=0}^{j-1}4^{-(d-1)k},4^{(d-1)(j+1)}\sum\limits_{k=0}^{j}4^{-(d-1)k}\bigg]$ for $1\leq j\leq N-1$. If $t\in F_j$ , we have 
	\[\{x\in\R^d:\;|f(x)|>t\}=\bigcup\limits_{y\in C}y+B(0,a4^{-(j+1)}).\]
	Denote $d_f(t)=|\{x\in\R^d:\;|f(x)|>t\}|$ and observe that, $$td_f(t)^\frac{1}{p}=t[(a4^{-(j+1)})^{d-(1-s)}]^\frac{d-1}{d-1+s}\lesssim1.$$
	Therefore, we have 
	\begin{align*}
		\|f\|_{L^{1+\frac{s}{d-1},r}}&= \left(\frac{d-1+s}{d-1}\right)^\frac{1}{r}\left(\int_0^{4^{d-1}}[d_f(t)t]^r\frac{dt}{t}+\sum_{j=0}^{N-1}\int_{F_j}[d_f(t)t]^r\frac{dt}{t}\right)^\frac{1}{r}\\
		&\lesssim \left(\int_0^{4^{d-1}}4^{-(d-1)}t^{r-1}\;dt+\sum_{j=0}^{N-1}\int_{F_j}\frac{dt}{t}\right)^\frac{1}{r}\\
		&\lesssim \left(4^{(r-1)(d-1)}+\sum_{j=0}^{N-1}1\right)^\frac{1}{r}\lesssim N^\frac{1}{r}
	\end{align*}
	
	Let $T=\{ce_1:\;c\in C_1\}$. Now for $x\in [-1,1]e_1+\mathbb{S}^{d-1}$, there exists $u\in T$ and $y\in\mathbb{S}^{d-1}$ such that
	\[x+u+y=z\in \{ce_1:c\in C_2\} \text{\;\;and\;\;} |\mathbb{S}^{d-1}(x+u)\cap B(z,a4^{-i})|\simeq 4^{-i(d-1)}.\]
	Thus, we can see that $A^T_1f(x)\gtrsim N$. Suppose $A^T_1$ maps $L^{1+\frac{s}{d-1},r}$ to $L^{1+\frac{s}{d-1}}$, then
	\begin{eqnarray*}
		1\lesssim|\{x:A^T_1f(x)\gtrsim N\}|&\lesssim& \left(\frac{\|A^T_1\|_{L^{1+\frac{s}{d-1},r}\to L^{1+\frac{s}{d-1},\infty}}\|f\|_{L^{1+\frac{s}{d-1},r}}}{N}\right)^{1+\frac{s}{d-1}}\\
		&\lesssim& \frac{\|A^T_1\|_{L^{1+\frac{s}{d-1},r}\to L^{1+\frac{s}{d-1},\infty}}}{N^{\left(1-\frac{1}{r}\right)\left(1+\frac{s}{d-1}\right)}}.
	\end{eqnarray*}
	The desired result follows by taking the limit $N\to\infty$.
\end{proof}

\section{Proof of Theorems \ref{Nikodymlacunary} and \ref{Ndelta}: Nikodym maximal operators}\label{Sec:proofNikodym}
\subsection{Proof of Theorem \ref{Nikodymlacunary}:} Theorem~\ref{Nikodymlacunary} follows by using similar 
arguments as in the case Theorem \ref{NT}. We give a brief sketch of the proof. 

Let us first, recall the $L^p$-estimates for the single scale operator $N^{\delta}_{1}$ 
from  [Theorem 1.3, \cite{NikodymSetsAndMaximalFunctionsAssociatedWithSpheres}]. 
\begin{align}
	\|N^{\delta}_{1}\|_{L^1(\R^d)\to L^1(\R^d)}&\lesssim \delta^{-1}\label{L1toL1}\\
	\|N^{\delta}_{1}\|_{L^\frac{3}{2}(\R^d)\to L^\frac{3}{2}(\R^d)}&\lesssim \delta^{-\frac{1}{6}}(-\log\delta)^\frac{1}{3}\label{L3/2toL3/2}\\
	\|N^{\delta}_{1}\|_{L^\frac{4}{3}(\R^d)\to L^\frac{4}{3}(\R^d)}&\lesssim \delta^{-\frac{1}{4}}(-\log\delta)^\frac{1}{4}.\label{L4/3toL4/3}
\end{align}
Denote 
$$N^{\delta}_{k}f(x)=\sup_{u\in \mathbb{S}^{d-1}}\frac{1}{|S^{\delta}(0)|}\Big|\int_{S^{\delta}(0)}f(x+2^{k}(u+y))dy\Big|.$$
As in the proof of Theorem~\ref{Nikodymlacunary}, using the identity \eqref{identity}, we get the following decomposition of $N^{\delta}_{k}f.$
$$N^{\delta}_{k}f(x)=N^{\delta}_{k}(\phi_{2^k}*f)(x)+\sum^{\infty}_{j=1}N^{\delta}_{k}(\psi_{2^{k-j}}*f)(x).$$
Consequently, we get that 
\begin{eqnarray}\label{reduction2}
	N^{\delta}_{lac}f(x)&\leq& \sup_{k\in\mathbb{Z}}|N^{\delta}_{k}(\phi_{2^k}*f)(x)|+\sum^{\infty}_{j=1}\sup_{k\in\mathbb{Z}}|N^{\delta}_{k}(\psi_{2^{k-j}}*f)(x)|.
\end{eqnarray}
Let $M_j^\delta$ denote the intermediary maximal operator 
$$M^{\delta}_{j}f(x)=\sup_{k\in\mathbb{Z}}|N^{\delta}_{k}(\psi_{2^{k-j}}*f)(x)|.$$
We observe that the arguments used to deal $M_0^T$ apply here as well and we get the corresponding estimate for $M_0^\delta f,$ namely,   
\[M_0^\delta f(x)\lesssim M_{HL}f(x),~\text{a.e}~x\in\mathbb{R}^{d}.\]
Next, we have the following $L^2$-estimate for the operators $M_j^\delta$.
\begin{lemma}\label{l2delta}
	For $d\geq2$ and $j\in \mathbb{N},$ the following holds
	$$\Vert M_{j}^{\delta}f\Vert_{2}\lesssim \delta^{-\epsilon}2^{-j\epsilon}\Vert f\Vert_{2}.$$
	Consequently, we get  that 
	$$\Vert N^{\delta}_{lac} \Vert_{2}\lesssim \delta^{-\epsilon}\Vert f\Vert_{2}.$$   
\end{lemma}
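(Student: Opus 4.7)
The plan is to mirror the proof of part (1) of Lemma~\ref{Mjestimates}, replacing the spherical measure $\sigma$ by the normalized indicator $K_\delta:=\chi_{S^\delta(0)}/|S^\delta(0)|$ of the $\delta$-annulus and taking $T=\mathbb{S}^{d-1}$ (so the effective Minkowski exponent is $s=d-1$). With this choice of $s$, the covering loss $2^{j(d-1)/2}$ coming from Lemma~\ref{coveringlemma} exactly cancels the standard spherical Fourier decay $2^{-j(d-1)/2}$, so there is no gain in $j$ from the sphere alone; the gain must come from the extra oscillation afforded by averaging over the $\delta$-thickness of the annulus.

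First, I would establish the sharp Fourier estimate
\[
|\widehat{K_\delta}(\xi)|\lesssim |\xi|^{-(d-1)/2}\min\{1,(\delta|\xi|)^{-1}\},
\]
by writing $\widehat{K_\delta}(\xi)=|S^\delta(0)|^{-1}\int_{1-\delta}^{1+\delta}r^{d-1}\widehat{\sigma}(r\xi)\,dr$, invoking the asymptotic expansion $\widehat{\sigma}(\eta)=|\eta|^{-(d-1)/2}(c_+ e^{2\pi i|\eta|}+c_- e^{-2\pi i|\eta|})+O(|\eta|^{-(d+1)/2})$, and integrating by parts in $r$ to produce the factor $(\delta|\xi|)^{-1}$ when $\delta|\xi|\geq 1$. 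Combining this Plancherel input with Lemma~\ref{coveringlemma} applied to $\mathbb{S}^{d-1}$ at scale $2^{-j}$ gives the single-scale bound
\[
\|N^\delta_1(\psi_{2^{-j}}\ast f)\|_2\lesssim 2^{j(d-1)/2}\,\|K_\delta\ast\psi_{2^{-j}}\ast f\|_2\lesssim\min\{1,(\delta\,2^j)^{-1}\}\,\|\psi_{2^{-j}}\ast f\|_2.
\]
The rescaling identity $N^\delta_k F(x)=N^\delta_1(F(2^k\cdot))(2^{-k}x)$ extends this bound to every scale $2^k$. Next, a Littlewood-Paley/orthogonality argument, dominating the supremum over $k$ by the $\ell^2$-sum and invoking Plancherel with the almost-disjoint Fourier supports of $\psi_{2^{k-j}}$ across $k$, yields
\[
\|M_j^\delta f\|_2^2\leq\sum_{k\in\mathbb{Z}}\|N^\delta_k(\psi_{2^{k-j}}\ast f)\|_2^2\lesssim\min\{1,(\delta\,2^j)^{-1}\}^2\,\|f\|_2^2.
\]
The elementary inequality $\min\{1,(\delta\,2^j)^{-1}\}\leq (\delta\,2^j)^{-\epsilon}$, valid for every $\epsilon\in(0,1)$, yields the stated bound on $M_j^\delta$. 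Summing the resulting geometric series in $j$ and combining with the already-established pointwise estimate $M_0^\delta f\lesssim M_{HL}f$ delivers the consequence $\|N^\delta_{lac}f\|_2\lesssim \delta^{-\epsilon}\|f\|_2$.

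The main obstacle is securing the extra $(\delta|\xi|)^{-1}$ decay for $\widehat{K_\delta}$; without that improvement the covering exponent $j(d-1)$ is only barely balanced by the standard spherical decay, and no $j$-gain is available, which would collapse the summation in $j$. The integration by parts must be carried out carefully because the two phases $e^{\pm 2\pi i r|\xi|}$ appearing in the asymptotics of $\widehat{\sigma}(r\xi)$ do not cancel each other, and the boundary contributions at $r=1\pm\delta$ must be tracked. Once this Fourier estimate is in hand, the rest of the argument is a routine adaptation of the linear spherical techniques of Lemma~\ref{Mjestimates}.
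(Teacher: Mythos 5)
Your proof is correct and follows essentially the same route as the paper: establish the Fourier decay estimate $|\widehat{K_\delta}(\xi)|\lesssim(1+|\xi|)^{-(d-1)/2}\min\{1,(\delta|\xi|)^{-1}\}$ (which, after the elementary bound $\min\{1,x\}\le x^\epsilon$, is exactly the paper's $\delta^{-\epsilon}(1+|\xi|)^{-(d-1)/2-\epsilon}$), then combine with the covering lemma at scale $2^{-j}$ on $\mathbb{S}^{d-1}$, rescale, and close the argument with a Littlewood--Paley/orthogonality estimate. The only deviation is cosmetic: the paper obtains the two-sided Fourier bound by writing $\widehat{\chi_{S^\delta(0)}}$ as a difference of two Bessel-function expressions and applying the mean value theorem, whereas you propose deriving it from the asymptotic expansion of $\widehat{\sigma}$ together with integration by parts in the radial integral; both are standard and give identical bounds, so this counts as the same approach.
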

\begin{proof}
	We claim that for any $0<\epsilon<1$ the following estimate holds,
	\begin{equation}\label{Fourierest}
		\frac{|\widehat{\chi_{S^{\delta}(0)}}(\xi)|}{|S^{\delta}(0)|}\lesssim \frac{\delta^{-\epsilon}}{(1+|\xi|)^{\frac{d-1}{2}+\epsilon}}.
	\end{equation}
	Indeed, to prove the above inequality it is enough to show
	\[|\widehat{\chi_{S^{\delta}(0)}}(\xi)|\lesssim\min\left\{\frac{1}{(1+|\xi|)^{\frac{d+1}{2}}},\frac{\delta}{(1+|\xi|)^{\frac{d-1}{2}}}\right\}.\]
	These estimates can be obtained by using the following identity and mean value theorem,
	\begin{eqnarray*}
		\widehat{\chi_{S^{\delta}(0)}}(\xi)&=&\frac{(1+\delta)^{\frac{d}{2}}J_{\frac{d}{2}}((1+\delta)|\xi|)}{|\xi|^\frac{d}{2}}-\frac{(1-\delta)^{\frac{d}{2}}J_{\frac{d}{2}}((1-\delta)|\xi|)}{|\xi|^\frac{d}{2}}.
	\end{eqnarray*}
	From the covering argument of Lemma \ref{coveringlemma} and \eqref{Fourierest}, we obtain that
	\begin{eqnarray}
		\|N^{\delta}_1(\psi_{2^{-j}}*f)\|_2&=& \left\|\sup_{u\in\mathbb{S}^{d-1}}\left|f\ast\psi_{2^{-j}}\ast\frac{\chi_{S^{\delta}(0)}}{|S^{\delta}(0)|}(\cdot+u)\right|\right\|_2\nonumber\\
		&\lesssim& 2^{j\frac{d-1}{2}}\left\|f\ast\psi_{2^{-j}}\ast\frac{\chi_{S^{\delta}(0)}}{|S^{\delta}(0)|}(\cdot)\right\|_2\nonumber\\
		&\lesssim& 2^{j\frac{d-1}{2}}\frac{\delta^{-\epsilon}}{2^{j\left(\frac{d-1}{2}+\epsilon\right)}}\|f\|_2\nonumber\\
		&=& \delta^{-\epsilon}2^{-j\epsilon}\|f\|_2\label{L2delta}.
	\end{eqnarray}
	By an scaling argument and the estimate \eqref{L2delta}, we have that 
	\begin{eqnarray*}
		\Vert M^{\delta}_{j}f\Vert^{2}_{2}&\leq& \left\| \left(\sum_{k\in\mathbb{Z}}|N^{\delta}_{k}(\psi_{2^{k-j}}*f)|^2\right)^\frac{1}{2}\right\|^{2}_{2}= \left\| \left(\sum_{k\in\mathbb{Z}}|N^{\delta}_{k}(\psi_{2^{k-j}}* \widetilde\psi_{2^{k-j}}*f)|^2\right)^\frac{1}{2}\right\|^{2}_{2}\\
		&\lesssim& \delta^{-2\epsilon}2^{-2j\epsilon}\sum_{k}\Vert \widetilde\psi_{2^{k-j}}*f\Vert^{2}_{2}\lesssim \delta^{-2\epsilon}2^{-2j\epsilon}\Vert f\Vert^{2}_{2}.
	\end{eqnarray*}
	Here, in the last step we have used the Littlewood-Paley inequality.
\end{proof}

By a bootstrapping argument similar to that in the proof of Theorem \ref{NT} with estimates \eqref{L1toL1}, \eqref{L3/2toL3/2}, \eqref{L4/3toL4/3} and Lemma \ref{l2delta}, we obtain the required estimates for $N^\delta_{lac}$.

A recursive application of the arguments similar to that of Lemma \ref{vector} for $M^\delta_j$ along with Lemma \ref{l2delta} and the estimates \eqref{L3/2toL3/2}, \eqref{L4/3toL4/3} gives us that
\begin{align}
	\|M_j^\delta\|_{L^p\to L^p}&\lesssim 2^{-j\epsilon\left(4-\frac{6}{p}\right)}\delta^{-\epsilon}\delta^{\frac{1}{p}-\frac{1}{2}}, &\text{for}\;d\geq3,\;\frac{3}{2}<p\leq2,\\
	\|M_j^\delta\|_{L^p\to L^p}&\lesssim 2^{-j\epsilon\left(3-\frac{4}{p}\right)}\delta^{-\epsilon}\delta^{\frac{1}{p}-\frac{1}{2}},&\text{for}\;d\geq4,\;\frac{4}{3}<p\leq2.    
\end{align}
\qed
\subsection{Proof of Theorem \ref{Ndelta}:}\label{Sec:proofbilinearNikodym}
The following result will be useful in computing the $L^\frac{1}{2}$-norm of the operator using appropriate $L^1$-bounds.  
\begin{lemma}[\cite{IPS}]\label{Sovine}
	Let $T$ be a bilinear operator satisfying the following. 
	\begin{enumerate}[label=(\roman*)]
		\item There exists $N>0$ such that for functions $f_i,f_j$ supported in the unit cubes $Q_{l_i}$ and $Q_{l_j}$ with their lower left corners at $l_i$ and $l_j$ respectively and $\|l_i-l_j\|_\infty>N,$ we have $T(f_i,f_j)=0.$
		\item There exists $R>0$ such that $T(f_1,f_2)$ is supported on $(\mathrm{supp }(f_1)+B(0,R))\bigcup(\mathrm{supp }(f_2)+B(0,R)).$
		\item The operator $T$ satisfies the $L^p$-estimate
		\[\|T(f_1,f_2)\|_{p_3}\leq C\|f_1\|_{p_1}\|f_2\|_{p_2},\]
		for all functions $f_1,f_2$ supported in a fixed cube and for some exponents $p_1, p_2$ and $p_3$ with $p_1,p_2\geq 1, \frac{1}{p_1}+\frac{1}{p_2}=\frac{1}{p}>1$ and $p_3\geq p$.  
	\end{enumerate}
	Then, for all $r\in[p,p_3],$ we have 
	\[\|T\|_{L^{p_1}\times L^{p_2}\to L^r}\lesssim C.\]
\end{lemma}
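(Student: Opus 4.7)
The plan is to reduce the global bound to a family of local bounds via a cube decomposition adapted to hypotheses (i) and (ii), and then to recover the $L^r$ estimate for every $r\in[p,p_3]$ by combining a local $L^{p_3}\hookrightarrow L^r$ embedding (valid because supports are uniformly bounded) with a discrete $\ell^p\hookrightarrow \ell^r$ embedding on the cube indices. Concretely, tile $\R^d=\bigcup_l Q_l$ by the unit cubes with lower-left corner $l\in\Z^d$, write $f_1=\sum_i f_1^i,\ f_2=\sum_j f_2^j$ with $f_1^i=f_1\chi_{Q_{l_i}}$ and $f_2^j=f_2\chi_{Q_{l_j}}$, and expand bilinearly. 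Hypothesis (i) kills every $T(f_1^i,f_2^j)$ with $\|l_i-l_j\|_\infty>N$, so the sum is restricted to the set $\Lambda=\{(i,j):\|l_i-l_j\|_\infty\le N\}$.

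Next, hypothesis (ii) forces $\mathrm{supp}\,T(f_1^i,f_2^j)\subset (Q_{l_i}\cup Q_{l_j})+B(0,R)$, a set of diameter $O(N+R)$ and volume $O((N+R)^d)$. In particular, any fixed $x$ lies in the support of $T(f_1^i,f_2^j)$ only if both $l_i$ and $l_j$ are within $N+R$ of $x$, so the pointwise overlap count is $\le M:=C_d(N+R)^{2d}$. A standard finite-overlap argument (subadditivity of $t\mapsto t^r$ when $r\le1$, or Hölder with the elementary bound $(\sum_{k=1}^M|a_k|)^r\le M^{r-1}\sum_{k=1}^M|a_k|^r$ when $r>1$) then yields
\[\|T(f_1,f_2)\|_r^r\lesssim_{N,R,r}\sum_{(i,j)\in\Lambda}\|T(f_1^i,f_2^j)\|_r^r.\]

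Now, the support bound on each piece, combined with $r\le p_3$ and Hölder's inequality on a set of volume $O((N+R)^d)$, gives $\|T(f_1^i,f_2^j)\|_r\lesssim_{N,R,r,p_3}\|T(f_1^i,f_2^j)\|_{p_3}$. Hypothesis (iii), applied to functions supported in the fixed unit cubes $Q_{l_i}$ and $Q_{l_j}$ (invoking translation invariance of the hypothesis), furnishes $\|T(f_1^i,f_2^j)\|_{p_3}\le C\|f_1^i\|_{p_1}\|f_2^j\|_{p_2}$. Setting $a_i=\|f_1^i\|_{p_1}$ and $b_j=\|f_2^j\|_{p_2}$, we are reduced to proving
\[\sum_{(i,j)\in\Lambda}a_i^r b_j^r\lesssim_{N}\Bigl(\sum_i a_i^{p_1}\Bigr)^{r/p_1}\Bigl(\sum_j b_j^{p_2}\Bigr)^{r/p_2},\]
for all $r\in[p,p_3]$, after which the right-hand side equals $\|f_1\|_{p_1}^r\|f_2\|_{p_2}^r$.

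The last inequality is the crux. For $r=p$ the exponents $p_1/p$ and $p_2/p$ are conjugate (thanks to $1/p=1/p_1+1/p_2$) and both $\ge1$, so Hölder gives
\[\sum_{(i,j)\in\Lambda}a_i^p b_j^p\le\Bigl(\sum_{(i,j)\in\Lambda}a_i^{p_1}\Bigr)^{p/p_1}\Bigl(\sum_{(i,j)\in\Lambda}b_j^{p_2}\Bigr)^{p/p_2}\lesssim N^d\,\|f_1\|_{p_1}^p\|f_2\|_{p_2}^p,\]
where the slice count $|\{j:(i,j)\in\Lambda\}|\lesssim N^d$ is used in the last step. For $r>p$, the sequence embedding $\ell^p\hookrightarrow\ell^r$ on the counting measure of $\Lambda$ upgrades this to $\sum_{(i,j)\in\Lambda}(a_ib_j)^r\le(\sum_{(i,j)\in\Lambda}(a_ib_j)^p)^{r/p}$, and the $r=p$ bound closes the loop. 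The main thing to be careful about is that $p$ may be strictly less than $1$, so the quasi-norm interpretation of $\|\cdot\|_p$ must be used and finite-overlap must be handled via subadditivity (for $r\le1$) separately from the Hölder route (for $r>1$); apart from this, the argument is a clean bookkeeping of the three hypotheses.
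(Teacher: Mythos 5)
The paper offers no proof of this lemma---it is imported directly from \cite{IPS}---so there is nothing in-paper to compare against; your argument is correct and is essentially the standard proof of such localization lemmas: tile $\R^d$ by unit cubes, use (i) to restrict the bilinear expansion to near-diagonal pairs, use (ii) both for the bounded-overlap summation of $\|\cdot\|_r^r$ and for the local H\"older embedding $L^{p_3}\hookrightarrow L^r$ on sets of volume $O_{N,R}(1)$, and close with (iii), discrete H\"older with the conjugate exponents $p_1/p,\,p_2/p$, and the embedding $\ell^p\hookrightarrow\ell^r$. The one point worth making explicit is your application of (iii) to $f_1^i,f_2^j$ supported in the \emph{distinct} cubes $Q_{l_i},Q_{l_j}$: since $\|l_i-l_j\|_\infty\le N$, both cubes sit inside a common cube of side $N+2$, and the hypothesis must be read as holding uniformly over translates of a fixed-size cube (which is its intended meaning and costs only an $N$-dependent constant); with that reading, your proof is complete.
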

Now we proceed with the proof of Theorem \ref{Ndelta}.
First, we prove the boundedness of $\mathcal N^\delta$ in the Banach range. We observe the trivial $L^\infty$ estimate
\begin{eqnarray}\label{Linfforbil} 
	\|\mathcal N^\delta\|_{L^\infty\times L^\infty\to L^\infty}\leq 1.
\end{eqnarray}
The H\"older's inequality yields 
\[\mathcal N^\delta(f_1,f_2)(x)\leq N_pf_1(x)N_{p'}f_2(x),\;\;\text{for }\frac{1}{p}+\frac{1}{p'}=1,\;p\geq1,\]
where $N_pf(x)=\sup\limits_{u\in\mathbb{S}^{d-1}}\left(\frac{1}{\delta}\int_{S^\delta(0)}|f(x+u+y)|^p\;d(y,z)\right)^\frac{1}{p}.$

We use a change of variable to polar co-ordinates and the well-known slicing technique, see~ \cite{MaximalEstimatesForTheBilinearSphericalAveragesAndTheBilinearBochnerRieszOperators}, to prove the following estimates for $N_pf.$ Consider 
\begin{align*}
	\|N_pf\|_p^p&\leq\int_{\R^d}\sup\limits_{u\in\mathbb{S}^{d-1}}\frac{1}{\delta}\int_{t=1-\delta}^{1+\delta}\int_{\mathbb{S}^{2d-1}}|f(x+u+ty)|^p\;d\sigma_{2d}(y,z)t^{2d-1}dtdx\\
	&\lesssim\int_{\R^d}\sup\limits_{u\in\mathbb{S}^{d-1}}\frac{1}{\delta}\int_{t=1-\delta}^{1+\delta}\int_{B_d(0,1)}|f(x+u+ty)|^p\;(1-|y|^2)^{\frac{d-2}{2}}dydtdx\\
	&\lesssim\int_{\R^d}\sup\limits_{u\in\mathbb{S}^{d-1}}\frac{1}{\delta}\int_{t=1-\delta}^{1+\delta}\int_{B_d(u,t)}|f(x+y)|^p\;dyt^{-d}dtdx\\
	&\lesssim\int_{\R^d}\frac{1}{\delta}\int_{t=1-\delta}^{1+\delta}\int_{B_d(0,10)}|f(x+y)|^p\;dydtdx\\
	&\leq\frac{1}{\delta}\int_{t=1-\delta}^{1+\delta}\int_{B_d(0,10)}\|f\|_p^p\;dydt\\
	&\lesssim\|f\|_p^p
\end{align*}
The estimate above implies that $\|\mathcal N^\delta(f_1,f_2)\|_1\lesssim\|f_1\|_p\|f_2\|_{p'}.$ The desired result for other exponents in the Banach triangle follows by interpolation.

Next, we prove the $L^1\times L^1\to L^{\frac{1}{2}}$ bound of $\mathcal N^\delta$. We can see that
\begin{align*}
	\mathcal N^\delta (f_1,f_2)(x)&\leq \frac{1}{\delta}\left|\int_{B_{2d}(0,10)} f_1(x+y)f_2(x+z)\;d(y,z)\right|\\
	&\leq \frac{1}{\delta}\int_{B_{d}(0,10)} |f_1(x+y)|\;dy \int_{B_{d}(0,10)}|f_2(x+z)|\;dz\\
	&\leq \frac{1}{\delta}\|f_1\|_1 \int_{B_{d}(0,10)}|f_2(x+z)|\;dz.
\end{align*}
From the above estimates, we obtain 
\[\|\mathcal N^\delta (f_1,f_2)\|_1\lesssim \frac{1}{\delta}\|f_1\|_1\|f_2\|_1.\]
The required $L^1(\mathbb R^d)\times L^1(\mathbb R^d)\to L^\frac{1}{2}(\mathbb R^d)$-estimate follows from an application of Lemma \ref{Sovine}.
\qed

\section{Boundedness of $\mathcal{N}^{T}_{lac}$ for the case $d=1$}\label{onedimensional}
	In dimension $d=1$, one can obtain $L^{p_1}\times L^{p_2}\to L^{p_3}$-boundedness for $1< p_1,p_2\leq\infty$, $1/p_1+1/p_2=1/p_3$ and $p_3>1$ when $s<\frac{1}{2}$, where $s$ is the upper Minkowski content of $T\subset\mathbb{R}$. Indeed, we have
	\begin{align*}
		\mathcal{N}^{T}_{lac}(f_1,f_2)(x)\leq \Vert f_2\Vert_{\infty} \sup_{u\in T,k\in\mathbb{Z}}\Big|\int_{\mathbb{S}^{1}}f_1(x+2^{k}(u+y))~d\sigma(y,z)\Big|.
	\end{align*}
	By symmetry and a change of variable, it is enough to consider
	\begin{align*}
		\sup_{u\in T,k\in \mathbb{Z}}\int^{1}_{0}|f_1(x+2^k(u+t))|~\frac{dt}{\sqrt{1-t^2}}&= \sup_{u\in T,k\in \mathbb{Z}}\sum_{j=1}^\infty 2^{\frac{j}{2}}\int_{1-2^{-(j-1)}}^{1-2^{-j}}|f_1(x+2^k(u+t))|~dt\\
		&=\sup_{u\in T,k\in \mathbb{Z}}\sum_{j=1}^\infty \frac{2^{-\frac{j}{2}}}{|I_j|}\int_{I_j+u}|f_1(x+2^kt)|~dt,
	\end{align*}
	where $I_j=[1-2^{-(j-1)},1-2^{-j}]$. Let $\{{u}^{0}_{j,l}\}_{l\in\Lambda}$ be the centers of intervals $\{\mathcal{C}_{j,l}\}_{l\in\Lambda}$ of length $2^{-j-1}$ covering $T$ such that $\#\{{u}^{0}_{j,l}\}\lesssim 2^{js}$. Then for  any $u\in T\cap[u^{0}_{j,l}-2^{-(j+1)},u^{0}_{j,l}+2^{-(j+1)}]$, we have
	\[\int_{I_j+u}|f_1(x+2^kt)|~dt\leq\int_{\tilde{I_j}+u_0}|f_1(x+2^kt)|~dt,\]
	where $\tilde{I_j}=[1-5\cdot2^{-(j+1)},1-2^{-(j+1)}]$ and $|\tilde{I_j}|=4|I_j|$. Thus, for $p>1$, we have
	\begin{align*}
		\left\|\sup_{u\in T,k\in \mathbb{Z}}\int^{1}_{0}|f_1(x+2^k(u+t))|~\frac{dt}{\sqrt{1-t^2}}\right\|_p&\lesssim\left\|\sum_{j=1}^{\infty}2^{-\frac{j}{2}}\sum_{u^{0}_{j,l}\in\mathcal{C}_{j,l}}\sup_{k\in\Z}\frac{1}{|\tilde{I_j}|}\int_{\tilde{I_j}+u^{0}_{j,l}}|f_1(x+2^kt)|~dt\right\|_p\\
		&\leq \sum_{j=1}^{\infty}2^{-\frac{j}{2}}\sum_{u^{0}_{j,l}\in\mathcal{C}_{j,l}}\left\|\sup_{k\in\Z}\frac{1}{|\tilde{I_j}|}\int_{\tilde{I_j}+u^{0}_{j,l}}|f_1(x+2^kt)|~dt\right\|_p\\
		&\lesssim\sum_{j=1}^{\infty}2^{js-\frac{j}{2}}\log(2+c2^{j})\|f_1\|_p,
	\end{align*}
	where we have used the $L^p$-estimates of Hardy-Littlewood maximal function. We note that sum in the above inequality is finite if $s<\frac{1}{2}$. Now, interchanging the role of $f_1$ and $f_2$, we can obtain $L^{p_1}\times L^{p_2}\to L^{p_3}$-boundedness of $\mathcal{N}^{T}_{lac}$ for $p_3>1$ when $s<\frac{1}{2}$.
	
	Further, we can improve the range of boundedness for exponents $(p_1,p_2,p_3)$ beyond Banach range for 
	$\mathcal{N}^{T}_{lac}$ using the decay in $L^2\times L^2\to L^1$-estimate when Minkowski content of 
	$T$ is small. Indeed, for some fixed $u,v\in T$, we rewrite
	\begin{eqnarray*}
		\mathcal{A}^{u,v}_{1}(f_1,f_2)(x)&=&\int_{\mathbb{S}^1}f_1(x+u+y)f_2(x+v+z)~d\sigma(y,z)\\
		&=&\int^{2\pi}_{0}f_1(x+u+\cos\theta)f_2(x+v+\sin\theta)~d\theta\\
		&:=&\int^{2\pi}_{0}f_1(x+\gamma_1(\theta))f_2(x+\gamma_2(\theta))~d\theta.
	\end{eqnarray*}
	We observe that the curve $\gamma(\theta)=(\gamma_1(\theta),\gamma_2(\theta))$, $\theta\in [0,2\pi]$ 
	satisfies all the three hypothesis of the class of singular curves defined in \cite{ChristZhou}. 
	Therefore, by Theorem $2.2$ of \cite{ChristZhou}  we get that there exists $\sigma<0$ such that
	\[\Vert \mathcal{A}^{u,v}_{1}(f_1,f_2)\Vert_{L^{1}}\lesssim \Vert f_1\Vert_{W^{2,\sigma}} \Vert f_2\Vert_{W^{2,\sigma}}.\]
	Moreover, using the Plancherel theorem in the above estimate we get  
	\begin{eqnarray}\label{decay}
		\Vert \mathcal{A}^{u,v}_{1}(f_1\ast \psi_{i},f_2\ast\psi_j)\Vert_{L^{1}}\lesssim  2^{\sigma\max\{i,j\}}\Vert f_1\Vert_{L^{2}} \Vert f_2\Vert_{L^{2}},
	\end{eqnarray}
	where $supp(\psi_j)\subset Ann(2^{j-1},2^{j+1})$. On the other hand, the estimate $(ii)$ of Lemma \ref{mainests} implies that 
	\[\Vert\mathcal{A}^{T}_{1}(f_1\ast\psi_{i},f_2\ast\psi_j)\Vert_{L^{\frac{1}{2}}}\lesssim  2^{(i+j)s}\Vert f_1\Vert_{L^{1}} \Vert f_2\Vert_{L^{1}}.\]
	Therefore, invoking the similar machinery as the higher dimensional case $(d\geq2)$, 
	( i.e. the proof of boundedness of $\mathcal{N}^{T}_{lac}$  in the region $ \Omega(\{O,A,P,B\})$) we 
	get boundedness of $\mathcal{N}^{T}_{lac}$  for $d=1$ in the non-Banach region for small $s$. Finally, 
	applying bilinear interpolation with the Banach range estimates as above we get boundedness of 
	$\mathcal{N}^{T}_{lac}$ in the shaded region $\Omega(\{0,A,D,B\})$. The point $D$ in the 
	following figure will be determined by the value of $\sigma$ and the upper Minkowski content 
	$s$ of $T$. Note that when $T=\{0\}$, the operator $\mathcal{N}^{0}_{lac}$ is the bilinear 
	lacunary circular maximal function considered by Christ and Zhou in \cite{ChristZhou}, and the 
	$L^{p_1}\times L^{p_2}\to L^{p_3}$ boundedness of  $\mathcal{N}^{0}_{lac}$ holds for 
	$ (1/p_1,1/p_2)\in [0,1)\times [0,1)$ with $1/p_1+1/p_2=1/p_3$.

\begin{figure}[H]
	\centering
	\begin{tikzpicture}[scale=3, font=\small]
		\fill[lightgray](0,0)--(1,0)--(0.6,0.6)--(0,1)--cycle;
		\draw[densely dotted] (0,1)node[left]{$B$}--(1,1);
		\draw[densely dotted] (1,0)node[below]{$A$}--(1,1);
		\draw[densely dotted] (0,0)--(0.6,0.6)node[above]{$D$};
		\draw[densely dotted] (1,0)--(1,1);
		\draw[thin][->]  (0,0) --(1.2,0) node[right]{$\frac{1}{p_1}$};
		\draw[black,thin][->]  (0,0) --(0,1.2) node[left]{$\frac{1}{p_2}$};
		\draw[densely dotted] (0,0)node[left]{$O$}--(1,1)node[right]{$C$};
		\draw[densely dotted] (0,1)--(1,0);
	\end{tikzpicture}
	\caption{Region $\Omega \left(\{O,A,D,B\}\right)$ .}
	\label{caseB}
\end{figure}

\section*{Acknowledgement}
Ankit Bhojak and Saurabh Shrivastava acknowledge the financial support 
from Science and Engineering Research Board, Department of Science and 
Technology, Govt. of India, under the scheme Core Research 
Grant, file no. CRG/2021/000230. Ankit Bhojak was also supported by the National Board of Higher Mathematics postdoctoral fellowship, file no. 0204/33/2023/R\&D-II/1141. 
Surjeet Singh Choudhary is 
supported by CSIR(NET), file no.09/1020(0182)/2019- EMR-I 
for his Ph.D. fellowship. Kalachand Shuin is supported by 
NRF grant no. 2022R1A4A1018904 and BK 21 Post doctoral fellowship and he is also supported by DST Inspire faculty fellowship with registration number IFA23-MA191. 
The authors acknowledge the support and hospitality provided by the 
International Centre for Theoretical Sciences, Bangalore (ICTS) for 
participating in the program - Modern trends in Harmonic 
Analysis (code: ICTS/Mtha2023/06).

\bibliography{biblio}
\end{document}